\crefname{theorem}{Theorem}{Theorems}
\crefname{thm}{Theorem}{Theorems}
\crefname{lemma}{Lemma}{Lemmas}
\crefname{lem}{Lemma}{Lemmas}
\crefname{remark}{Remark}{Remarks}
\crefname{prop}{Proposition}{Propositions}
\crefname{defn}{Definition}{Definitions}
\crefname{corollary}{Corollary}{Corollaries}
\crefname{conjecture}{Conjecture}{Conjectures}
\crefname{question}{Question}{Questions}
\crefname{chapter}{Chapter}{Chapters}
\crefname{section}{Section}{Sections}
\crefname{figure}{Figure}{Figures}
\theoremstyle{plain}
\newtheorem{thm}{Theorem}[section]
\newtheorem{lemma}[thm]{Lemma}
\newtheorem{corollary}[thm]{Corollary}
\newtheorem{prop}[thm]{Proposition}
\newtheorem{question}[thm]{Question}
\theoremstyle{definition}
\theoremstyle{remark}
\newtheorem{remark}[thm]{Remark}
\numberwithin{equation}{section}
\renewcommand{\P}{\mathbb P}
\newcommand{\R}{\mathbb R}
\newcommand{\Z}{\mathbb Z}
\newcommand{\cH}{\mathcal H}
\newcommand{\bbH}{\mathbb H}
\newcommand{\eps}{\varepsilon}
\newcommand{\Aut}{\operatorname{Aut}}
\renewcommand{\root}{0}
\newcommand{\stab}{\operatorname{Stab}}
\newcommand{\sgn}{\operatorname{sgn}}
\title{Self-avoiding walk on nonunimodular transitive graphs}
\author{Tom Hutchcroft\footnote{Statistical Laboratory, DPMMS, Univeristy of Cambridge}}
\begin{document}

\maketitle

\vspace{-1em}

\begin{abstract}
We study self-avoiding walk on graphs whose automorphism group has a transitive nonunimodular subgroup. We prove that 
 	self-avoiding walk is ballistic, that
 	the bubble diagram converges at criticality, and that
 	the critical two-point function decays exponentially in the distance from the origin.
This implies that the critical exponent governing the susceptibility takes its mean-field value, and hence that the number of self-avoiding walks of length $n$ is comparable to the $n$th power of the connective constant.
We also prove that the same results hold for a large class of repulsive walk models with a self-intersection based interaction, including the weakly self-avoiding walk. 
All these results apply in particular to the product $T_k \times \Z^d$ of a $k$-regular tree ($k\geq 3$) with $\Z^d$, 
 for which these results were previously only known for large $k$.
\end{abstract}

\section{Introduction}

A \textbf{self-avoiding walk} (SAW) on a graph $G$ is a path in $G$ that visits each vertex at most once. 
In the probabilistic study of self-avoiding walk, one fixes a graph (often the hypercubic lattice $\Z^d$), and is interested in both  enumerating the number of $n$-step SAWs and studying the asymptotic behaviour of a uniformly random SAW of length $n$. 
This leads to two particularly important questions.
  \begin{question}
\label{Q:counting}
  What is the asymptotic rate of growth of the number of SAWs of length $n$?
\end{question}
\begin{question}
\label{Q:speed}
How far from the origin is the endpoint of a typical SAW of length $n$?
\end{question}
These questions are simple to state but are often very difficult to answer.
Substantial progress has been and continues to be made for SAW on Euclidean lattices. 
In particular, a very thorough understanding of SAW on $\Z^d$ for $d\geq 5$ has been established in the seminal work of Hara and Slade 
\cite{MR1171762,MR1174248}.  
The low-dimensional cases $d=2,3,4$ continue to present serious challenges.
 For a comprehensive introduction to and overview of this literature, we refer the reader to \cite{MR2986656,MR3025395}.

%
%

Recently, the study of SAW on more general graphs has gathered momentum. In particular, a systematic study of SAW on transitive graphs has been initiated in a series of papers by Grimmett and Li \cite{MR3118955,1610.00107,1412.0150,1510.08659,MR3646785,MR3252803,MR3367126}, which is primarily concerned with properties of the connective constant. Other works on SAW on non-Euclidean transitive graphs include \cite{MR3166203,MR3005730,benjamini2016self,li2016positive,madras2005self,gilch2017counting}. See \cite{1704.05884} for a survey of these results.


In this paper, we given complete answers to \cref{Q:counting} and \cref{Q:speed} for self-avoiding walk on graphs whose automorphism group admits a \emph{nonunimodular transitive subgroup} (defined in the next subsection).  Although graphs whose \emph{entire} automorphism group is nonunimodular are generally considered to be rather contrived and unnatural, the class of graphs with a nonunimodular transitive \emph{subgroup} of automorphisms is much larger. Indeed, it includes natural examples such as the product $T_k \times \Z^d$ of a $k$-regular tree with $\Z^d$ for every $k\geq 3$ (or indeed $T_k \times H$ where $H$ is an arbitrary transitive graph), for which the results of this paper were only previously known for sufficiently large values of $k$ (see the discussion at the end of this subsection). 

Our proofs are inspired by the analysis we carried out for percolation on the same class of graphs in our paper \cite{Hutchcroftnonunimodularperc}, which relies on similar methodology. It should be remarked that although every graph possessing a transitive nonunimodular subgroup of automorphisms is necessarily nonamenable \cite{MR1082868}, we never use this fact in our analysis.

 Our first theorem answers \cref{Q:counting} in the nonunimodular  context. 
Let $G$ be a transitive graph, let $\root$ be a fixed root vertex of $G$, and let $Z(n)$ be the number of length $n$ SAWs in $G$ starting at $\root$. 
Hammersley and Morton \cite{MR0064475} observed that $Z(n)$ satisfies the submultiplicative inequality $Z(n+m)\leq Z(n)Z(m)$, from which it follows by Fekete's Lemma that there exists a constant $\mu_c=\mu_c(G)$, known as the \textbf{connective constant} of $G$, such that 
\[
\mu_c = \lim_{n\to\infty} Z(n)^{1/n} = \inf_{n\geq 1} Z(n)^{1/n},
\]
 so that in particular
\[\mu_c^n \leq Z(n) \leq \mu_c^{n+o(n)}\]
for every $n\geq 0$. We also define the
\textbf{susceptibility} $\chi(z)$ to be the generating function
\[
\chi(z) = \sum_{n\geq 0} z^n Z(n),
\]
which has radius of convergence $z_c:=\mu_c^{-1}$.   The connective constant is not typically expected to have a nice or interesting value (a notable exception is the hexagonal lattice \cite{MR675241,MR2912714}), and it is usually much more interesting to estimate the subexponential correction to $Z(n)$ than it is to estimate $\mu_c$.  We stress that submultiplicativity arguments alone do not yield any control of this subexponential correction whatsoever.


\begin{thm}[Counting walks]
\label{thm:SAWcounting}
Let $G$ be a connected, locally finite graph, and suppose that $\Aut(G)$ has a transitive nonunimodular subgroup. Then there exists a constant $C$ such that
\[\frac{z_c}{z_c-z} \leq \chi(z) \leq \frac{C z_c}{z_c-z} \quad \text{ and } \quad
\mu_c^n \leq Z(n) \leq C\mu_c^n
\] 
for every $0\leq z<z_c$ and $n\geq 0$.
\end{thm}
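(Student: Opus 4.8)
The lower bounds are immediate from submultiplicativity: Fekete's lemma gives $Z(n) \ge \mu_c^n$ for every $n$, and summing, $\chi(z) = \sum_{n \ge 0} z^n Z(n) \ge \sum_{n \ge 0}(z/z_c)^n = z_c/(z_c - z)$. The substance is in the two upper bounds, and the plan is to route everything through the \textbf{bubble condition}. Write $G_z(x,y) = \sum_{\omega : x \to y} z^{|\omega|}$ for the self-avoiding walk two-point function, so that $\chi(z) = \sum_{x \in V} G_z(\root, x)$, and let $B(z) = \sum_{x \in V} G_z(\root, x)^2$ be the (open) bubble diagram. Granting that $B(z_c) < \infty$, the bound $\chi(z) \le C z_c/(z_c - z)$ follows from the standard differential-inequality argument for self-avoiding walk: one has the elementary inequality $\tfrac{d}{dz}\bigl(z\chi(z)\bigr) \le \chi(z)^2$ (mark a vertex on a walk and cut there), while in the reverse direction the discrepancy is controlled by pairs of self-avoiding walks that meet away from their common endpoint, and hence by the bubble diagram; since $B$ is increasing in $z$, finiteness at $z_c$ makes this reverse inequality uniform on $(0,z_c)$, and integrating it --- using $\chi(z) \to \infty$ as $z \uparrow z_c$ --- gives the matching upper bound. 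This machinery uses only transitivity; see \cite{MR2986656, MR3025395} for the case of $\Z^d$.

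Passing from $\chi(z) \le C z_c/(z_c - z)$ to $Z(n) \le C\mu_c^n$ is not purely Tauberian, since the sequence $z_c^n Z(n)$ need not be monotone, and I would close this gap with a concatenation estimate of the form $Z(n)\,Z(m) \le C\,Z(n+m)$, which is available here because self-avoiding walk is ballistic. Given self-avoiding walks $\alpha$ and $\beta$ of lengths $n$ and $m$, one appends to $\alpha$ the image of $\beta$ under a fixed group element carrying $\root$ to the endpoint of $\alpha$, arranged so that the appended copy heads away from the region occupied by $\alpha$; ballisticity makes this possible for a positive proportion of $\alpha$, and the exponential decay of $G_{z_c}$ bounds the probability that the appended copy nonetheless returns to meet $\alpha$, so that the concatenation is a genuine self-avoiding walk for a positive proportion of pairs. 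Then the sequence $Z(n)/C$ is supermultiplicative, so Fekete's lemma gives $Z(n) \le C\mu_c^n$ --- which also re-derives the susceptibility bound. (A transitive-graph analogue of the Hammersley--Welsh bridge decomposition would be an alternative route, with the susceptibility bound playing the role of removing the usual $e^{O(\sqrt n)}$ correction.)

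The crux is therefore the bubble condition $B(z_c) < \infty$, and this is where nonunimodularity is indispensable: on $\Z^d$ with $d \le 4$ the bubble diverges at $z_c$, and the whole point of passing to a nonunimodular transitive subgroup is to force convergence. I would obtain it from exponential decay of the critical two-point function, $G_{z_c}(\root, x) \le C \lambda^{d(\root, x)}$ with $\lambda < 1$ small enough relative to the growth rate of $G$ that $\sum_x \lambda^{2 d(\root, x)} < \infty$. The engine is the modular function: a nonunimodular transitive subgroup carries a nontrivial homomorphism $\Delta$ to $\R_{>0}$, inducing a multiplicative cocycle on pairs of vertices and a tilted mass-transport principle. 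Following the strategy of \cite{Hutchcroftnonunimodularperc}, I would study the $\Delta$-reweighted quantity $\sum_{x} \Delta(\root, x)^{\theta} G_z(\root, x)$ and prove, via a self-improving inequality, that it stays finite up to $z = z_c$ for a suitable exponent $\theta \in (0,1)$; morally, the $\Delta$-coordinate of a critical self-avoiding walk has strictly positive drift, so the walk is transient in the $\Delta$-direction and returns near $\root$ only with exponentially small weight. This yields exponential decay in the $\Delta$-direction; combining it with the behaviour in the directions on which $\Delta$ is constant --- for $T_k \times \Z^d$ the $\Z^d$-factor, on which the relevant part of $G$ is already exponentially suppressed --- gives exponential decay of $G_{z_c}$ in the graph metric, and hence the bubble condition. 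I expect this tilted self-bounding argument to be the main obstacle: both to make it run for self-avoiding walk rather than percolation, and, crucially, to extract a decay rate fast enough that the square of the two-point function is summable. Everything downstream of the bubble condition is an adaptation of established self-avoiding-walk technology.
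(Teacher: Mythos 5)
Your overall architecture is correct and matches the paper: prove a bubble condition using modular-function tilting, then run the standard differential-inequality argument. But two of your steps take detours that either introduce genuine gaps or are more circuitous than what the paper does, and the one step you flag as ``the main obstacle'' is indeed left essentially unresolved.

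The larger gap is in how you propose to get the bubble condition. You want to first establish exponential decay of $G_{z_c}(\root,x)$ in the graph metric and then sum the square over $V$, which requires the decay rate to beat half the exponential volume growth. Nothing in your sketch produces a quantitative comparison of that kind, and the paper never needs one: it bounds $B_w(z)\le \chi_w(z,1/2)^2$ by a one-line application of the cocycle identity (\cref{lem:tiltedbubble}), so the whole problem reduces to showing $\chi(z_c,1/2)<\infty$, i.e.\ $z_c<z_t$. (Exponential decay of $G$ in the graph metric is then an easy \emph{corollary}, item 2 of \cref{thm:generalrepulsive}, not an input.) For the finiteness of the critically tilted susceptibility you gesture at a ``self-improving inequality'' on $\sum_x \Delta^\theta G_z$ in the style of \cite{Hutchcroftnonunimodularperc}, but the paper's actual mechanism is quite different and is really its central idea: a bridge decomposition in the height direction, with generalized sub/super-multiplicativity (\cref{lem:slicksupmult,lem:slicksubmult}) and Fekete's lemma applied \emph{at} $z_{c,\lambda}$ to force $\alpha_w(z_{c,\lambda})\ge\max\{\lambda,1-\lambda\}>1/2$, fed into a tilted Madras--Slade generating-function inequality (\cref{prop:slick}) $\chi_w(z,1/2)\le z^{-1}\exp[2\sum_t a_w(z;t)e^{t/2}]$. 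The exponent $\theta=1/2$ is forced by the $\lambda\leftrightarrow 1-\lambda$ symmetry from the tilted mass-transport principle, which you do not exploit.

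The smaller issue is the passage from the $\chi$-bound to $Z(n)\le C\mu_c^n$. You propose to establish a reverse-concatenation estimate $Z(n)Z(m)\le C\,Z(n+m)$ using ballisticity plus exponential decay of $G_{z_c}$. This is close to circular -- the reverse-concatenation bound is exactly the probabilistic content of the statement you are proving, and both ballisticity and exponential decay are downstream of the same tilted machinery -- and it is also underspecified (controlling the probability that the appended copy avoids all of $\alpha$ is not obviously a consequence of two-point decay). The paper sidesteps all of this with the elementary arithmetic \cref{lem:chitoZ}: for any submultiplicative $(c_n)$ one has $x^n c_n\le [\Phi(y)/(n+1)]^2 (x/y)^{2n}$, and choosing $y=z_c n/(n+1)$ converts the susceptibility upper bound directly into $Z(n)\le C\mu_c^n$ with no appeal to ballisticity. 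You should use that lemma (or a Hammersley--Welsh-type decomposition, which you mention as an alternative) rather than the concatenation route.
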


In fact we are able to obtain explicit estimates on the constants that appear in this theorem, see \cref{remark:quant}. 
The lower bounds of the theorem are trivial consequences of submultiplicativity.
The upper bounds on $\chi(z)$ and $Z(n)$ in this theorem are equivalent up to the choice of constant, see \cref{lem:chitoZ}. Probabilistically, the upper bound on $Z(n)$ means that the concatenation of two uniformly chosen $n$-step SAWs has probability at least $1/C>0$ to be self-avoiding for every $n\geq 0$.

Our next theorem answers \cref{Q:speed} in the nonunimodular context. 
We define $\P_n$ to be the uniform measure on self-avoiding walks of length $n$ in $G$ starting at $\root$, and denote the random self-avoiding walk sampled from $\P_n$ by $X=(X_i)_{i=0}^n$. For each $z\geq 0$ and $x \in V$, we define the \textbf{two-point function}
\[
G(z;x) = \sum_{\omega\in \Omega} z^{|\omega|} \mathbbm{1}\bigl( \omega: \root \to x \text{ self-avoiding}\bigr) = \sum_{n\geq 0} z^{n} Z(n) \P_n\left( X_n = x\right).
\]
In the following theorem, $d(\root,x)$ denotes the graph distance between $\root$ and $x$.

\begin{thm}[Speed and two-point function decay]
\label{thm:SAWspeed}
Let $G=(V,E)$ be a connected, locally finite graph, and suppose that $\Aut(G)$ has a transitive nonunimodular subgroup. Then there exists a positive constant $c$ such that
  \[
  G(z_c;x) \leq \exp\left[-c d(0,x)\right]
  \]
for every $x\in V$ and 
  \[
\P_{n}\left(d(0,X_n) \geq c n\right) \geq 1-\exp\left[-c n\right]
  \]
  for every $n\geq 0$.
\end{thm}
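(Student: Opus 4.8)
\medskip
\noindent\textbf{Proof proposal.} The plan is to reduce both assertions of \cref{thm:SAWspeed} to a single weighted estimate on the critical two-point function, and to obtain that estimate from a mass-transport argument exploiting nonunimodularity.

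\smallskip
\noindent\emph{Setup.} Fix a transitive nonunimodular subgroup $\Gamma\le\Aut(G)$, which we may take closed, let $K=\stab_\Gamma(0)$, and let $\Delta\colon\Gamma\to(0,\infty)$ be its modular function. Since $\Delta$ is trivial on the compact subgroup $K$, setting $\Delta(x):=\Delta(\gamma)$ for any $\gamma\in\Gamma$ with $\gamma 0=x$ gives a well-defined $\Delta\colon V\to(0,\infty)$ with $\Delta(0)=1$; the ratio $\Delta(x)/\Delta(y)$ depends only on the $\Gamma$-orbit of $(x,y)$, so (there being finitely many orbits of adjacent pairs) $|\log\Delta(x)-\log\Delta(y)|\le L\,d(x,y)$ for a finite $L=L(G)$, and the image of $\Delta$ is an unbounded subgroup of $(0,\infty)$ because $\Gamma$ is nonunimodular. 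Write $c_n(x)=Z(n)\,\P_n(X_n=x)$ for the number of $n$-step self-avoiding walks from $0$ to $x$, so $G(z;x)=\sum_n z^n c_n(x)$, and write $\E_n$ for expectation under $\P_n$. Two elementary facts will be used: (i) reversing an $n$-step self-avoiding walk from $0$ to $x$ and applying a group element carrying $x$ to $0$ is a bijection onto the $n$-step self-avoiding walks from $0$ to a vertex $x^-$ with $\Delta(x^-)=\Delta(x)^{-1}$ and $d(0,x^-)=d(0,x)$, so $c_n(x)=c_n(x^-)$; (ii) splitting an $(n{+}m)$-step walk at step $n$ and using orbit-invariance of $\Delta(x)/\Delta(y)$ and of the walk counts gives $a_{n+m}\le a_n a_m$ for $a_n:=\sum_x c_n(x)\Delta(x)^s$, so $\nu_s:=\lim_n a_n^{1/n}=\inf_n a_n^{1/n}$ exists for every fixed $s$, and $\sum_x G(z;x)\Delta(x)^s=\sum_n z^n a_n$.

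\smallskip
\noindent\emph{Reduction to a key estimate, and the two-point function.} The crux is the estimate
\[
\text{there exists } s\in(0,1)\ \text{ with }\ \textstyle\sum_{x\in V}G(z_c;x)\Delta(x)^s<\infty, \qquad(\star)
\]
equivalently $\nu_s<\mu_c$. Granting $(\star)$: since $\sum_n z_c^n a_n<\infty$ we get $z_c^n a_n\to0$, and as $Z(n)\ge\mu_c^n$ this gives $\E_n[\Delta(X_n)^s]=a_n/Z(n)\to0$; combined with $Z(n)\le C\mu_c^n$ from \cref{thm:SAWcounting} there is an $N$ with $a_N=\E_N[\Delta(X_N)^s]\,Z(N)\le\tfrac12\mu_c^N$, hence $\nu_s\le a_N^{1/N}<\mu_c$, and by Fekete's lemma $a_n\le C'(\nu')^n$ for all $n$ for some $\nu'\in(\nu_s,\mu_c)$ and $C'<\infty$. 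Then $c_n(x)\Delta(x)^s\le a_n\le C'(\nu')^n$ for every $x,n$; applying this to $x^-$ and using fact (i) gives the complementary bound $c_n(x)\le C'(\nu')^n\Delta(x)^s$, so $c_n(x)\le C'(\nu')^n$ for all $x$. Since $c_n(x)=0$ for $n<d(0,x)$ and $z_c\nu'=\nu'/\mu_c<1$,
\[
G(z_c;x)=\sum_{n\ge d(0,x)}z_c^n c_n(x)\ \le\ \frac{C'}{1-z_c\nu'}\,\exp\!\big[-d(0,x)\log(\mu_c/\nu')\big],
\]
which is the claimed exponential decay of $G(z_c;\cdot)$ in $d(0,x)$ (the prefactor is absorbed into the rate after the usual care at small distances, using $X_n\ne 0$ for $n\ge1$).

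\smallskip
\noindent\emph{Ballisticity.} With the same $\nu',C'$ and $s$, Markov's inequality gives $\P_n(\Delta(X_n)\ge t)\le t^{-s}\E_n[\Delta(X_n)^s]\le C' t^{-s}(\nu'/\mu_c)^n$ for all $t>0$. Choosing $t=e^{-\beta n}$ with $\beta>0$ small enough that $\theta:=(\nu'/\mu_c)e^{\beta s}<1$ yields $\P_n(\Delta(X_n)\ge e^{-\beta n})\le C'\theta^n$; on the complementary event $\log\Delta(X_n)<-\beta n$, and the Lipschitz bound from the setup gives $d(0,X_n)\ge|\log\Delta(X_n)-\log\Delta(0)|/L>\beta n/L$. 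Hence, with $c:=\tfrac12\min\{\beta/L,\,-\log\theta\}$ (shrunk further so that $cn<1$ for the finitely many small $n$), $\P_n(d(0,X_n)\ge cn)\ge\P_n(\Delta(X_n)<e^{-\beta n})\ge1-C'\theta^n\ge1-e^{-cn}$ for all $n$, which is \cref{thm:SAWspeed}.

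\smallskip
\noindent\emph{On $(\star)$, and the main obstacle.} Proving $(\star)$ --- that tilting the self-avoiding walk by $\Delta^s$ strictly lowers its exponential growth rate --- is where nonunimodularity does the real work, and is the hard part; everything above is bookkeeping with Fekete's and Markov's inequalities. (On a regular tree $(\star)$ holds by direct computation, $\Delta(x)^{1/2}$ exactly counteracting the exponential growth of the spheres about $0$, but even there it does not follow from submultiplicativity of $a_n$ alone.) Following the methodology used for percolation on these graphs in \cite{Hutchcroftnonunimodularperc}, the idea is to run mass transport ``along the walk'': writing $Z_\tau(n)$ for the number of $n$-step self-avoiding walks from $0$ to the level set $\{x:\log\Delta(x)=-\tau\}$, the tilted mass-transport principle yields the exact identity $Z_\tau(n)=e^{\tau}Z_{-\tau}(n)$ (whose summed-up form is the symmetry $\sum_x G(z;x)\Delta(x)^\beta=\sum_x G(z;x)\Delta(x)^{1-\beta}$), and one must combine this with the concatenation inequalities for the $Z_\tau(n)$ and with the mean-field bound $Z(n)\le C\mu_c^n$ (and the control $\chi(z)\asymp(z_c-z)^{-1}$ it provides near $z_c$) to force $\sum_\tau e^{-s|\tau|}Z_\tau(n)$ to be exponentially smaller than $Z(n)$. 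I expect the delicate point to be exactly this last step --- converting the soft mass-transport symmetry into a genuine exponential gain; once $(\star)$ is available, no further geometric input about $G$ is needed.
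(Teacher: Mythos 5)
Your reduction is correct in spirit, and the ``bookkeeping'' half of the argument is sound: granting $(\star)$, you correctly deduce exponential decay of $G(z_c;\cdot)$ via $a_n\le C'(\nu')^n$, the symmetry $c_n(x)=c_n(x^-)$, and the trivial $c_n(x)=0$ for $n<d(0,x)$; and you correctly deduce ballisticity from $\E_n[\Delta(X_n)^s]\le C'(\nu'/\mu_c)^n$, Markov, and the Lipschitz bound $|\log\Delta|\le L\,d$. This is essentially the same route the paper takes in \cref{sec:profit} to get items (2) and (3) of \cref{thm:generalrepulsive} from \cref{thm:generallambda}: there too the two-point decay comes from $\chi_w(z';1/2)<\infty$ for some $z'\in(z_c,z_t)$ together with the trivial comparison $G_w(z;x)\le(z/z')^{d(0,x)}G_w(z';x)$, and ballisticity comes from comparing $Z_w(\lambda;n)$ to $Z_w(\lambda';n)$ via item (1).

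The problem is that your $(\star)$ --- equivalently $\mu_{c,s}<\mu_{c,0}=\mu_c$ for some $s\in(0,1)$, i.e.\ strict monotonicity of $\lambda\mapsto z_{c,\lambda}$ near $0$ --- is precisely the content of the paper's main technical result \cref{thm:lambda}/\cref{thm:generallambda}, and you do not prove it. You explicitly acknowledge this (``the idea is to run mass transport along the walk\dots I expect the delicate point to be exactly this last step''). The sketch you offer is both too vague to assess and, as written, circular: you propose to derive $(\star)$ using $Z(n)\le C\mu_c^n$ and $\chi(z)\asymp(z_c-z)^{-1}$, but in the paper those are \emph{consequences} of \cref{thm:lambda} (proved from the bubble-diagram bound \cref{lem:tiltedbubble}, which needs $\chi(z_c;1/2)<\infty$, which is $(\star)$). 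The missing idea is the tilted bridge machinery of \cref{sec:short}: one defines up-bridges and the slab generating function $A_w(z;n)$, proves a generalized supermultiplicativity (\cref{lem:slicksupmult}), extracts via Fekete a rate $\alpha_w(z)$ with $\alpha_w(z_{c,\lambda})\ge\max\{\lambda,1-\lambda\}$ (\cref{cor:slickalphabound}), and --- crucially --- proves the tilted Madras--Slade/Hammersley--Welsh inequality $\chi_w(z,1/2)\le z^{-1}\exp\bigl[2\sum_t a_w(z;t)e^{t/2}\bigr]$ (\cref{prop:slick}), whose proof uses the unfolding of a walk into alternating up/down-bridges together with a clever use of the mass-transport identity $a_w=e^{-t}d_w$ to make the product telescope. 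It is this inequality, combined with $\alpha_w(z_{c,0})\ge 1>1/2$, that yields $\chi(z_{c,0};1/2)<\infty$ and hence $(\star)$ (\cref{cor:slick}). None of this --- the bridge decomposition, the Fekete argument on bridges, or the tilted walk-to-bridge conversion --- appears in your proposal, so the heart of the theorem is missing.

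One smaller remark: both your write-up and the paper's proof of item (2) actually produce a bound of the form $G(z_c;x)\le A\,e^{-bd(0,x)}$ with a prefactor $A=\chi_w(z';1/2)$ that may exceed $1$; absorbing it into the rate is fine once $d(0,x)$ is large, but the finitely many small-distance cases would also need $G(z_c;x)<1$ for $x\neq 0$. Your parenthetical remark glosses over this in the same way the paper does, so I do not count it against you, but be aware that it is not entirely automatic.
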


Let us briefly survey related theorems in the literature. It is reasonable to conjecture that the conclusions of \cref{thm:SAWcounting,thm:SAWspeed} hold for every transitive nonamenable graph. Indeed, it is plausible that the conclusion of \cref{thm:SAWcounting} holds for every transitive graph with at least quintic volume growth. The conjectures are trivial when the graph is a tree. Li \cite{li2016positive} has shown that SAW is ballistic on a certain class of infinitely ended transitive graphs, and Madras and Wu \cite{madras2005self} and Benjamini \cite{benjamini2016self} have shown that SAW on certain specific hyperbolic lattices has linear mean displacement; the latter paper also establishes that the conclusions of \cref{thm:SAWcounting} hold for the lattices they consider. Gilch and M\"uller \cite{gilch2017counting} have proven that the conclusions of \cref{thm:SAWcounting} hold for free products of quasi-transitive graphs (which are always infinitely ended).

 Nachmias and Peres \cite{MR3005730} proved that  the conclusions of \cref{thm:SAWcounting,thm:SAWspeed} hold for every transitive nonamenable graph satisfying 
\begin{equation}
\label{eq:NachPe} (d-1)\rho  < \mu_c, \end{equation}
where $d$ is the degree of the graph, $\rho$ is its spectral radius, and $\mu_c$ is its connective constant. In particular, this holds whenever $\rho \leq 1/2$ \cite{MR1756965}, as well as for nonamenable transitive graphs of large girth (where what constitutes `large' depends on the spectral radius and the degree). However, we do \emph{not} expect this criterion to apply to $T_k \times \Z^d$ for small $k$. Furthermore, our non-perturbative approach also lets us handle \emph{anisotropic} SAW on $T_k \times \Z^d$, in which the walk is weighted to prefer $\Z^d$-edges to $T_k$-edges, and this bias may be arbitrarily strong (see \cref{subsec:general}). It seems very unlikely that such a result could be established  using perturbative techniques. 

For the hypercubic lattice, Hara and Slade \cite{MR1171762,MR1174248} proved that $Z(n)$ grows like $\mu_c^n$ as $n\to\infty$ whenever $d \geq 5$. In the same setting, they also proved that the distance from the origin to the endpoint of an $n$-step SAW is typically of order $n^{1/2}$. Hara \cite{MR2393990} later proved that the critical two-point function decays like $\|x\|^{-d+2}$. (Both behaviours are the same as for simple random walk.) For $d=4$ it is conjectured that similar asymptotics hold up to logarithmic corrections. See \cite{MR3339164} and references therein for an account of substantial recent progress on four dimensional \emph{weakly} self-avoiding walk. For $d=2,3$ the gap between what is known and what is conjectured is very large; important results include those of \cite{MR0139535,MR0166845,MR3117515,MR3474464}. See \cite{MR3025395,MR2986656} and references therein for more details.

\subsection{Tilted walks and the modular function}

We now define unimodularity and nonunimodularity. 
Let $G=(V,E)$ be a connected, locally finite graph, and let $\Aut(G)$ be the group of automorphisms of $G$.  Recall that a subgroup $\Gamma \subseteq \Aut(G)$ is said to be \textbf{transitive} if it acts transitively on $G$, that is, if for any two vertices $u,v\in V$ there exists $\gamma \in \Gamma$ such that $\gamma u = v$.
  The \textbf{modular function} $\Delta=\Delta_\Gamma: V^2 \to (0,\infty)$ of a transitive subgroup $\Gamma\subseteq \Aut(G)$ is defined to be
\[
\Delta(x,y)=\frac{|\stab_y x|}{|\stab_x y|},
\]
where $\stab_x y$ is the orbit of $y$ under the stabilizer of $x$ in $\Gamma$. The group $\Gamma$ is said to be \textbf{unimodular} if $\Delta \equiv 1$, and \textbf{nonunimodular} otherwise. The most important properties of the modular function are the \textbf{cocycle identity}, which states that
\[
\Delta(x,y)\Delta(y,z) = \Delta(x,z)
\]
for every $x,y,z\in V$, 
and the \textbf{tilted mass-transport principle}, which states that if $F:V^2\to[0,\infty]$ is invariant under the diagonal action of $\Gamma$, meaning that $F(\gamma x, \gamma y)=F(x,y)$ for every $\gamma \in \Gamma$ and $x,y \in V$, then
\[
\sum_{y \in V} F(x,y) = \sum_{y\in V}F(y,x)\Delta(x,y).
\]
See \cite[Chapter 8]{LP:book} for proofs of these properties and further background, and \cite[\S 4]{BC2011} for a probabilistic interpretation of the modular function. Note that $\Delta$ is itself invariant under the diagonal action of $\Gamma$. 

The prototypical example of a pair $(G,\Gamma)$ of a graph together with a nonunimodular transitive subgroup $\Gamma \subseteq \Aut(G)$ is given by the $k$-regular tree $T_k$ with $k\geq 3$ together with the group $\Gamma_\xi$ of automorphisms fixing some specified end $\xi$ of $T_k$. (An \textbf{end} of a tree is an equivalence class of infinite simple paths, where paths may start from any vertex and two simple paths are considered equivalent if the sets of vertices they visit have finite symmetric difference.) Let us briefly give an explicit description of the modular function in this example. Every vertex $v$ of $T_k$ has exactly one neighbour that is closer to the end $\xi$ than it is. We call this vertex the \emph{parent} of $v$. (In other words, the parent of $v$ is the unique neighbour of $v$ that lies in the unique simple path that starts at $v$ and is in the equivalence class $\xi$.) All other neighbours of $v$ are said to be \emph{children} of $v$. This leads to a partition of $T$ into levels $(L_n)_{n\in \Z}$, unique up to choice of index, such that if $v$ is in $L_n$ then its parents are in $L_{n+1}$ and its children are in $L_{n-1}$.  The modular function in this example is given explicitly by \[\Delta(u,v)=(k-1)^{n} \iff u \in L_m \text{ and } v \in L_{m+n} \text{ for some $m\in \Z$.}\]  
From this example many further examples can be built. In particular, if $G$ is an arbitrary transitive graph and $T_k \times G$ is the product of $T_k$ with $G$, then $\Aut(T_k \times G)$ has a nonunimodular transitive subgroup of automorphisms isomorphic $\Gamma$ to $\Gamma_\xi \times \Aut(G)$ and with modular function $\Delta_\Gamma ((u,x),(v,y))=\Delta_{\Gamma_\xi}(u,v)\Delta_{\Aut(G)}(x,y)$. 
See e.g.\ \cite{MR1856226,timar2006percolation,Hutchcroftnonunimodularperc} for further examples. 

As in \cite{Hutchcroftnonunimodularperc}, the key to our analysis is to define \emph{tilted} versions of classical quantities such as the susceptibility. These quantities will be similar to their classical analogues, but will have an additional parameter, $\lambda$, and will be weighted in some sense by the modular function to the power $\lambda$. We will show that these tilted quantities behave in similar ways to their classical analogues (corresponding to $\lambda=0$) but, crucially, will have different critical values associated to them. 

For each $\lambda \in \R$ and $n\geq 0$, we define
\begin{align*}
Z(\lambda; n) &= \sum_{x\in V} \sum_{\omega \in \Omega} \mathbbm{1}\bigl[ \omega : \root \to x \text{ a length $n$ SAW}\bigr]\Delta^\lambda(\root,x),\\
\intertext{and define the \textbf{tilted susceptibility} to be}
\chi_w(z,\lambda) &= \sum_{n\geq0} z^n Z_w(\lambda; n).
\end{align*}
Since every self-avoiding walk $\omega$ of length $n+m$ is the concatenation of two self-avoiding walks $\omega_1$ and $\omega_2$ of lengths $n$ and $m$ respectively, the cocycle identity implies that
\[
Z(\lambda;n+m) \leq Z(\lambda;n)Z(\lambda;m)
\]
for every $n,m\geq 0$. It follows by Fekete's Lemma, as before, that 
\begin{equation}
\label{eq:muclambdadef}
\mu_{c,\lambda} = \mu_{c,\lambda}(G,\Gamma):=\lim_{n\to\infty} Z(\lambda;n)^{1/n} = \inf_{n\geq 1} Z(\lambda;n)^{1/n}
\end{equation}
and that $z_{c,\lambda}=z_{c,\lambda}(G,\Gamma):=\mu_{c,\lambda}^{-1}$ is the radius of convergence of $\chi(z;\lambda)$. 

The tilted mass-transport principle leads to a symmetry between $\lambda$ and $1-\lambda$. Indeed, it implies that
\begin{align}
\nonumber
Z(\lambda;n)&= \sum_{x\in V} \sum_{\omega \in \Omega} \mathbbm{1}\bigl[ \omega : \root \to x \text{ a length $n$ SAW}\bigr]\Delta^\lambda(\root,x) \\
&=\sum_{x\in V}\sum_{\omega \in \Omega} \mathbbm{1}\bigl[ \omega : x\to \root \text{ a length $n$ SAW}\bigr]\Delta^\lambda(x,\root) \Delta(\root,x) \nonumber \\
&=\sum_{x\in V}\sum_{\omega \in \Omega} \mathbbm{1}\bigl[ \omega : \root \to x \text{ a length $n$ SAW}\bigr]\Delta^{1-\lambda}(\root,x) =Z(1-\lambda;n) \label{eq:lambdasymmetry}
\end{align}
for every $\lambda \in \R$ and $n\geq 0$, and hence that
\[
 \chi(z,\lambda)=\chi(z,1-\lambda)
\]
for every $\lambda \in \R$, and $z\geq 0$. In particular, it follows that $z_{c,\lambda}=z_{c,1-\lambda}$ for every $\lambda \in \R$.  
Moreover, it is easy to see that $Z(\lambda,n)$ is a convex function of $\lambda$ for each fixed $n$, and combined with \eqref{eq:lambdasymmetry} this implies that both $Z(\lambda,n)$ and $\chi_w(z,\lambda)$ are decreasing on $(-\infty,1/2]$ and increasing on $[1/2,\infty)$, while $z_{c,\lambda}(w)$ is increasing on $(-\infty,1/2]$ and decreasing on $[1/2,\infty)$. This leads to a special role for $\lambda=1/2$, which we call the \textbf{critical tilt}. We call $z_t=z_{c,1/2}$ the \textbf{tiltability threshold} and call $[0,z_t)$ the \textbf{tiltable phase}.

The main technical result of this paper is the following. We will show in \cref{sec:profit} that it easily implies \cref{thm:SAWcounting,thm:SAWspeed}.
\begin{thm}
\label{thm:lambda}
Let $G$ be a connected, locally finite graph, and let $\Gamma$ be a transitive nonunimodular subgroup of $\Aut(G)$.
 Then the function 
 \[ \R \to (0,z_t],\quad \lambda \mapsto z_{c,\lambda}\] is continuous, and is strictly increasing on $(-\infty,1/2]$. 
\end{thm}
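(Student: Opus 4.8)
The plan is to split the statement into its two assertions: continuity is essentially a consequence of convexity, while the strict monotonicity is the substantive point.

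\textit{Continuity.} For each $n$, writing $a^{(n)}_x$ for the number of length-$n$ SAWs from $\root$ to $x$, the function $\lambda\mapsto Z(\lambda;n)=\sum_x a^{(n)}_x e^{\lambda\log\Delta(\root,x)}$ is a finite sum of exponentials in $\lambda$ (finite because there are finitely many SAWs of each length), hence is log-convex in $\lambda$ by H\"older's inequality. Since $d(\root,x)\le n$ whenever $a^{(n)}_x\neq 0$ and $|\log\Delta|$ is bounded, say by $M$, along edges, we have $e^{-M|\lambda|n}Z(0;n)\le Z(\lambda;n)\le Z(0;n)e^{M|\lambda|n}$, so $\log\mu_{c,\lambda}=\lim_n\tfrac1n\log Z(\lambda;n)$ is sandwiched between $\log\mu_c-M|\lambda|$ and $\log\mu_c+M|\lambda|$. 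As a pointwise limit of convex functions that is finite on all of $\R$, the map $\lambda\mapsto\log\mu_{c,\lambda}$ is convex and finite on $\R$, hence continuous; so $z_{c,\lambda}=e^{-\log\mu_{c,\lambda}}$ is continuous and strictly positive. Combined with the symmetry $z_{c,\lambda}=z_{c,1-\lambda}$ and the weak monotonicity already established, this shows the image of $\lambda\mapsto z_{c,\lambda}$ is exactly $(0,z_t]$ (and $z_t=z_{c,1/2}\le e^{M/2}<\infty$).

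\textit{Reduction for strict monotonicity.} Put $\Lambda(\lambda):=\log\mu_{c,\lambda}$, which is convex and symmetric about $1/2$. Then $z_{c,\lambda}$ is strictly increasing on $(-\infty,1/2]$ precisely when $\Lambda$ is constant on no nondegenerate subinterval of $(-\infty,1/2]$, equivalently (using symmetry and convexity) on no nondegenerate subinterval of $\R$; so suppose for contradiction that $\Lambda\equiv\log\mu$ on $[\lambda_1,\lambda_2]$ with $\lambda_1<\lambda_2$. I would exploit the \emph{modular height} $h(x):=\log\Delta(\root,x)$, which by the cocycle identity satisfies $h(y)-h(x)=\log\Delta(x,y)$ and has $M$-bounded increments along edges. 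Call a length-$n$ SAW $\omega$ from $\root$ a \textbf{bridge} if $h(\omega_0)<h(\omega_i)\le h(\omega_n)$ for all $1\le i\le n$; bridges concatenate after a suitable group translation, and since $\Delta(\root,\cdot)$ multiplies along the splice, the bridge coefficient $b(\lambda;n):=\sum_{\omega\text{ a length-}n\text{ bridge}}\Delta^\lambda(\root,\omega_n)$ is supermultiplicative, with limit $\beta_{c,\lambda}:=\sup_n b(\lambda;n)^{1/n}$. A Hammersley--Welsh decomposition of an arbitrary SAW into an alternating sequence of bridges and reversed bridges with a partition-type constraint on successive heights, together with the tilted mass-transport principle (which identifies the reversed-bridge weight with $b(1-\lambda;\cdot)$), yields $\mu_{c,\lambda}=\max\{\beta_{c,\lambda},\beta_{c,1-\lambda}\}$, the partition-counting error being only $e^{O(\sqrt n)}$ and hence washing out. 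A nonempty bridge ends at a vertex with $h>0$, so comparing $\Delta^\lambda$ with $\Delta^{\lambda'}$ there shows $\beta_{c,\lambda}$ is nondecreasing in $\lambda$; hence $\mu_{c,\lambda}=\beta_{c,1-\lambda}$ on $(-\infty,1/2]$, and the claim reduces to: $\mu\mapsto\beta_{c,\mu}$ is \emph{strictly} increasing on $[1/2,\infty)$.

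\textit{The main obstacle.} It remains to rule out a flat interval of the convex function $\mu\mapsto\log\beta_{c,\mu}$ inside $[1/2,\infty)$. Since convex functions converging pointwise to an affine limit have vanishing derivative increments there, a flat interval at $\mu$ would force $\tfrac1n\E^{\mathrm{br}}_{\mu,n}[h(\omega_n)]\to 0$, where $\E^{\mathrm{br}}_{\mu,n}$ denotes the law on length-$n$ bridges weighted by $\Delta^\mu(\root,\cdot)$; so it suffices to show that for every $\mu>1/2$ the $\mu$-tilted bridge has strictly positive linear drift in the modular height. I expect to obtain this by a direct construction: nonunimodularity forces simple random walk to have strictly negative $h$-drift (apply Jensen's inequality to the mass-transport identity $\sum_{y\sim\root}\Delta(\root,y)=\deg(\root)$), hence its $e^{h}$-tilt to have strictly positive $h$-drift, and a loop-erasure/regeneration argument then produces an exponentially large (in $n$) family of bridges reaching modular height $\ge\kappa n$ for some fixed $\kappa>0$. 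Grafting such an excursion onto a positive-$\Delta^\mu$-weight fraction of ordinary length-$n$ bridges multiplies $b(\mu;n)$ by a genuinely exponential factor when the tilt is raised from $\mu$ to $\mu'>\mu$, contradicting flatness. The delicate point — where I expect the real work to lie — is keeping this grafting surgery injective and ensuring the grafted family is weight-comparable to the full bridge ensemble at tilt $\mu$, rather than being swamped by it; the bounded increments of $h$ and the supermultiplicativity of $b(\mu;\cdot)$ are the structural inputs that should make this go through.
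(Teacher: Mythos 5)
Your \emph{continuity} argument is correct and in fact cleaner than the route the paper takes: each $\lambda\mapsto\tfrac1n\log Z(\lambda;n)$ is convex, the pointwise limit is convex and (by your sandwich) finite on all of $\R$, hence continuous. The paper instead recovers continuity as a byproduct of identifying $\alpha_w$ and $\beta_w$ at $z_{c,\lambda}$. Your reduction is also in the right spirit — the entire game is the interplay between walk rates and bridge rates via Hammersley--Welsh. The problem is that you have relegated exactly the paper's main technical content to an assertion, and then layered a second unproved (and, it turns out, unnecessary) ingredient on top.

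Concretely, there are two gaps. First, the claim $\mu_{c,\lambda}=\max\{\beta_{c,\lambda},\beta_{c,1-\lambda}\}$ with an $e^{O(\sqrt n)}$ partition error is not established, and it is not as routine as in $\Z^d$. If you run the Hammersley--Welsh decomposition of a half-space walk into alternating up/down bridges with decreasing height jumps $s_1>s_2>\cdots$ at a general tilt $\lambda$, the tilt $e^{\lambda t}$ of the total displacement $t=\sum(-1)^{i+1}s_i$ does not distribute as a product over the pieces in any convergence-producing way. The paper's Proposition~\ref{prop:slick} is precisely this decomposition, but carried out at the single tilt $\lambda=1/2$, where the mass-transport identity $a_w(z;t)=e^{-t}d_w(z;t)$ turns both $e^{t/2}a_w$ and $e^{-t/2}d_w$ into the \emph{geometric mean} $\sqrt{a_w d_w}$, so that summing over all decreasing sequences produces the absolutely convergent product $\prod_{t}(1+e^{t/2}a_w(z;t))$. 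That the resulting half-space generating function is finite whenever $\alpha_w(z)>1/2$ is the real theorem here; it is what lets one prove $\alpha_w(z_{c,\lambda})=\beta_w(z_{c,\lambda})=\max\{\lambda,1-\lambda\}$ for $\lambda\neq 1/2$. Second — and this is where your route diverges to no advantage — the paper then gets strict monotonicity of $\lambda\mapsto z_{c,\lambda}$ \emph{for free}, because $\alpha_w(z)$ is trivially strictly decreasing in $z$ (an up-bridge reaching slab $S_{nt_0}$ has length $\ge n$, so $A_w(z';n)\ge(z'/z)^nA_w(z;n)$); no drift analysis is needed. Your ``positive drift of the $\mu$-tilted bridge'' step, by contrast, is the hard core of your plan and you concede it is unresolved: the family of SRW/LERW-derived long bridges you propose to graft may carry exponentially small $\Delta^\mu$-weight relative to the full bridge ensemble, and you have given no mechanism to rule this out. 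So the proposal identifies the right objects but does not close either of the two places where the actual work has to happen, and it introduces an extra difficulty (the drift estimate) that the paper's parametrization by height rather than by length renders unnecessary.
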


Besides implying \cref{thm:SAWcounting,thm:SAWspeed}, \cref{thm:lambda} also immediately yields \emph{tilted} versions of those theorems, at least when $\lambda$ does not take its critical value of $1/2$. 
We define a probability measure on the set of self-avoiding walks of length $n$ starting at $\root$ by
\begin{align*}
\P_{\lambda,n}(\{\omega\}) &= Z(\lambda;n)^{-1} \Delta^\lambda(\root,\omega^+),
\end{align*}
where $\omega^+$ denotes the endpoint of $\omega$.

\begin{thm}
\label{thm:tilted}
Let $G$ be a connected, locally finite graph, let $\Gamma$ be a transitive nonunimodular subgroup of $\Aut(G)$. Then the following hold.
\begin{enumerate}
  \item For every $\lambda \neq 1/2$, there exists a constant $C_\lambda$ such that
\[\frac{z_{c,\lambda}}{z_{c,\lambda}-z} \leq \chi(z,\lambda) \leq \frac{C_\lambda z_{c,\lambda}}{z_{c,\lambda}-z} \quad \text{ and } \quad
\mu_c^n \leq Z_w(\lambda;n) \leq C_\lambda \mu_{c,\lambda}^n
\] 
for every $0<z<z_{c,\lambda}$ and $n\geq 0$.
  \item For every $z \in [0,z_t)$, there exists a constant $c_{z}$ such that
  \[
  G(z;x) \leq e^{-c_{z} d(0,x)}
  \]
  for every $x\in V$.
  \item For every $\lambda \neq 1/2$, there exist positive constants $c_\lambda$ and $c_\lambda'$ such that
  \begin{equation}
  \label{eq:speedheight}
\P_{\lambda,n}\Bigl[\sgn(\lambda -1/2) \log \Delta(0,X_n) \geq c_\lambda n\Bigr] \geq 1-e^{-c_\lambda n}
  \end{equation}
  for every $n\geq 0$ and hence
  \begin{equation}
  \label{eq:speednotheight}
\P_{\lambda,n}\Bigl[d(0,X_n) \geq c_\lambda' n\Bigr] \geq 1-e^{-c_\lambda' n}.
  \end{equation}
  for every $n\geq 0$.
\end{enumerate}
\end{thm}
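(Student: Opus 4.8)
Here is a plan for proving \cref{thm:tilted}.

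The plan is to derive all three parts from \cref{thm:lambda} and \cref{thm:SAWcounting} (more precisely, from the argument for the latter carried out in \cref{sec:profit}), reducing throughout to the case $\lambda \le 1/2$ by means of the symmetry $Z(\lambda;n) = Z(1-\lambda;n)$, $\mu_{c,\lambda} = \mu_{c,1-\lambda}$; then $\lambda < 1/2$ whenever $\lambda \ne 1/2$, and \cref{thm:lambda} gives $\mu_{c,\lambda} > \mu_{c,\mu}$ whenever $\lambda < \mu \le 1/2$.

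For part~(1) the lower bounds are immediate from submultiplicativity and Fekete's lemma, which give $Z(\lambda;n) \ge \mu_{c,\lambda}^n$ and hence $\chi(z,\lambda) \ge \sum_{n\ge 0}(z\mu_{c,\lambda})^n = z_{c,\lambda}/(z_{c,\lambda}-z)$; the only substantial point is the upper bound $Z(\lambda;n) \le C_\lambda \mu_{c,\lambda}^n$, which (as in \cref{lem:chitoZ}) is equivalent to the upper bound on $\chi(z,\lambda)$. For $\lambda = 0$ this is \cref{thm:SAWcounting}, and for a general fixed $\lambda \ne 1/2$ I would obtain it by repeating the proof of \cref{thm:SAWcounting} with the tilt $\lambda$ in place throughout: that proof rests on the differential inequality $z\,\partial_z \chi(z,\lambda) \le \chi(z,\lambda)^2$ --- which one gets by differentiating $\chi$ in $z$, decomposing a self-avoiding walk at a marked interior vertex, dropping the disjointness of the two resulting halves, and applying the tilted mass-transport principle --- combined with the continuity and strict monotonicity of $z_{c,\cdot}$ supplied by \cref{thm:lambda}, and the value of $\lambda$ enters only through the strict monotonicity of $z_{c,\cdot}$ at $\lambda$, which is available for every $\lambda \ne 1/2$. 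This is the crux of the matter, but it is precisely the argument of \cref{sec:profit}.

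Given part~(1), part~(3) follows by an exponential Chebyshev (Cramér) estimate. Fix $\lambda < 1/2$ and $\mu \in (\lambda, 1/2)$; for $a > 0$, using $\Delta^\lambda = \Delta^\mu \Delta^{\lambda-\mu}$ together with $\lambda - \mu < 0$,
\[
\P_{\lambda,n}\!\left(\Delta(0,X_n) \ge e^{-an}\right) = \frac{1}{Z(\lambda;n)} \sum_{\omega\,:\,\Delta(0,\omega^+) \ge e^{-an}} \Delta^\lambda(0,\omega^+) \;\le\; e^{(\mu-\lambda)an}\,\frac{Z(\mu;n)}{Z(\lambda;n)} \;\le\; C_\mu\, e^{n\left[(\mu-\lambda)a + \log(\mu_{c,\mu}/\mu_{c,\lambda})\right]},
\]
where the last step uses part~(1) for $\mu$ and $Z(\lambda;n) \ge \mu_{c,\lambda}^n$. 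Since $\mu_{c,\mu} < \mu_{c,\lambda}$, taking $a$ small makes the bracketed exponent negative, and after relabelling the constants this yields \eqref{eq:speedheight} for $\lambda < 1/2$; the case $\lambda > 1/2$ is symmetric. Then \eqref{eq:speednotheight} follows because the cocycle identity and the elementary bound $\Delta(u,v) \in [1/d, d]$ for adjacent $u, v$ (with $d$ the degree of $G$) give $|\log \Delta(0,x)| \le d(0,x)\log d$, so that $\sgn(\lambda - 1/2)\log\Delta(0,X_n) \ge c_\lambda n$ forces $d(0,X_n) \ge c_\lambda n/\log d$.

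Finally, part~(2) follows from parts~(1) and~(3). Given $z < z_t = z_{c,1/2}$, use the continuity in \cref{thm:lambda} to fix $\lambda \in (0,1/2)$ with $z < z_{c,\lambda}$, and let $c_\lambda > 0$ be the constant of \eqref{eq:speedheight}. Since $Z(\lambda;n)\,\P_{\lambda,n}(X_n = x) = \Delta^\lambda(0,x) \cdot \#\{n\text{-step SAWs } 0 \to x\}$, we have $G(z;x) = \Delta^{-\lambda}(0,x) \sum_{n \ge d(0,x)} z^n Z(\lambda;n)\,\P_{\lambda,n}(X_n = x)$. If $\Delta(0,x) \ge 1$, then $\Delta(0,x) \ge 1 > e^{-c_\lambda n}$ for all $n \ge 1$, so the event $\{X_n = x\}$ is contained in the exceptional event of \eqref{eq:speedheight} and hence $\P_{\lambda,n}(X_n = x) \le e^{-c_\lambda n}$; combined with $Z(\lambda;n) \le C_\lambda \mu_{c,\lambda}^n$ and $z < z_{c,\lambda}$ this gives $G(z;x) \le \Delta^{-\lambda}(0,x)\, C_z\, e^{-c_\lambda d(0,x)} \le C_z e^{-c_\lambda d(0,x)}$. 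If $\Delta(0,x) < 1$, apply the same identity with root $x$ (using transitivity) and the reversal symmetry of the two-point function to write $G(z;x) = \Delta^{\lambda}(0,x) \sum_{n \ge d(0,x)} z^n Z(\lambda;n)\,\P^x_{\lambda,n}(X_n = 0)$, where $\P^x_{\lambda,n}$ is the tilted measure based at $x$; since $\Delta(x,0) = \Delta(0,x)^{-1} > 1$, the event $\{X_n = 0\}$ is contained in the exceptional event of \eqref{eq:speedheight} based at $x$, and the same computation yields $G(z;x) \le \Delta^{\lambda}(0,x) C_z e^{-c_\lambda d(0,x)} \le C_z e^{-c_\lambda d(0,x)}$. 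In both cases $G(z;x) \le C_z e^{-c_\lambda d(0,x)}$, which is the asserted decay after absorbing $C_z$ into a slight decrease of $c_\lambda$. The single real obstacle is thus part~(1), the tilted form of \cref{thm:SAWcounting} handled in \cref{sec:profit}; parts~(2)--(3), which are the content of \cref{thm:tilted} new relative to \cref{thm:SAWcounting,thm:SAWspeed}, are the short deductions above, and it only remains to check that the constants degenerate solely as $\lambda \to 1/2$ (and $c_z$ solely as $z \to z_t$).
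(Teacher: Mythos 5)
Your part (3) argument is essentially the paper's (with $\lambda<1/2$ in place of $\lambda>1/2$, which is equivalent by symmetry), and your reduction of part (1) to the argument of \cref{sec:profit} is the right move. Two points are worth flagging.

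First, your summary of part (1) names the wrong differential inequality. Decomposing a walk at a marked interior vertex and \emph{dropping} disjointness gives $\partial_z[z\chi_w(z,\lambda)]\le\chi_w(z,\lambda)^2$, but integrating that only reproduces the trivial lower bound $\chi\ge z_{c,\lambda}/(z_{c,\lambda}-z)$. The upper bound on $\chi$ — which is the whole point — comes from the companion \emph{lower} differential inequality $\partial_z[z\chi_w(z,\lambda)]\ge \chi_w(z,\lambda)^2/B_w(z)$, where $B_w(z)$ is the bubble diagram, together with the finiteness $B_w(z_{c,\lambda})\le\chi_w(z_{c,\lambda},1/2)^2<\infty$ supplied by \cref{lem:tiltedbubble} and the strict inequality $z_{c,\lambda}<z_t$ from \cref{thm:lambda}. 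The strict monotonicity of $\lambda\mapsto z_{c,\lambda}$ enters exactly there, to make the bubble finite; you should make that chain explicit rather than invoking the easy inequality, since without the bubble term the argument does not close.

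Second, your proof of part (2) is correct but takes a genuinely different and substantially heavier route than the paper. You derive the two-point function decay from parts (1) and (3) by splitting $G(z;x)$ according to the tilted measures $\P_{\lambda,n}$, handling the signs of $\log\Delta(0,x)$ separately, and invoking both the counting bound $Z(\lambda;n)\le C_\lambda\mu_{c,\lambda}^n$ and the ballisticity bound $\P_{\lambda,n}(X_n=x)\le e^{-c_\lambda n}$. This works, and it is a nice observation that ballisticity alone forces exponential decay. The paper's proof, however, needs none of this: since every walk $0\to x$ has length at least $d(0,x)$, one has the trivial monotonicity $G_w(z;x)\le(z/z')^{d(0,x)}G_w(z';x)$ for $z<z'$, and since $G_w(z';x)\le\chi_w(z';1/2)\min\{\Delta(0,x),\Delta(x,0)\}^{1/2}\le\chi_w(z';1/2)<\infty$ for $z'<z_t$ (this is just the definition of $z_t$ plus the $\lambda\leftrightarrow 1-\lambda$ symmetry), fixing any $z<z'<z_t$ immediately gives $G_w(z;x)\le\chi_w(z';1/2)(z/z')^{d(0,x)}$. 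In particular, part (2) does not logically depend on parts (1) or (3) at all; your version creates a dependence that is avoidable and obscures the elementary nature of the estimate.
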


\begin{remark} The estimate \eqref{eq:speedheight} also yields additional information concerning the untilted case $\lambda =0$.
Indeed, it shows that for all $\lambda \neq 1/2$ the \emph{height} $\log \Delta(\root,X_n)$ of the walk behaves ballistically. Moreover, it shows that the walk is typically displaced in the downward direction when $\lambda<1/2$ and the upward direction when $\lambda>1/2$.
\end{remark}

\cref{thm:tilted} naturally leads to questions concerning the critically tilted case $\lambda=1/2$. We present some such questions along with some partial results in \cref{sec:end}.

\subsection{Other repulsive walk models}
\label{subsec:general}

All our results apply more generally to a large family of repulsive walk models, including the self-avoiding walk as a special case. This generalization does not add substantial complications to the proof. In this section we define the family of models that we will consider and state the generalized theorem.

Let $G$ be a connected, locally finite graph, let $\Gamma$ be a transitive subgroup of $\Aut(G)$, and let $E^\rightarrow$ be the set of oriented edges of $G$. 
An oriented edge $e$ of $G$ is oriented from its tail $e^-$ to its head $e^+$ and has reversal $e^\leftarrow$. 
Let $n\geq0$. A \textbf{path of length $n$} in $G$ is a pair of functions $\omega\{0,\ldots,n\} \to V$ and $\omega\{ (i,j) : 0\leq i,j \leq n : |i-j|=1\} \to E^\rightarrow$ such that $\omega(i+1,i) =\omega(i,i+1)^\leftarrow$ for every $0\leq i < n$ and 
$\omega(i,i+1)^-=\omega(i)$ and $\omega(i,i+1)^+=\omega(i+1)$ for every $0 \leq i < n$. (Note that, by a slight abuse of notation, we denote both the path and each of these functions by the same letter, usually $\omega$.)
In other words, a path of length $n$ is a multi-graph homomorphism from the line graph with $n$ edges into $G$. 
A \textbf{path} in $G$ is a path of some length $n$, and we write $|\omega|$ for the length of the path $\omega$. 
Note that length one paths are just oriented edges, while length zero (a.k.a.\ \textbf{trivial}) paths are just vertices.
We write $\omega^-$ and $\omega^+$ for the first and last vertices of $\omega$, and write $\omega^\leftarrow$ for the reversal of $\omega$. We say that an ordered pair of paths $(\omega_1,\omega_2)$ in $G$ are \textbf{contiguous} if $\omega_1^+=\omega_2^-$. We will usually write simply that $\omega_1$ and $\omega_2$ are contiguous, with the ordering being implicit. Given a pair of contiguous paths $(\omega_1,\omega_2)$, we define their concatenation $\omega_1\circ \omega_2$ in the natural way. By slight abuse of terminology, we say that a contiguous pair of paths $(\omega_1,\omega_2)$ are \textbf{disjoint} if $\omega_1(i) \neq \omega_2(j)$ for every $i < |\omega_1|$ and $j>0$.

Let $G$ be a graph, let $\Gamma$ be a transitive group of automorphisms of $G$, and let $\Omega$ be the set of paths of finite length in $G$.  Consider a \textbf{weight function} $w:\Omega \to [0,\infty)$, which is always taken to give weight one to every trivial path. 
We say that 
$w$ is $\Gamma$\textbf{-invariant} if $w(\gamma \omega)=w(\omega)$ for every $\gamma \in \Gamma$ and $\omega \in \Omega$. We say that $w$ is \textbf{reversible} if  $w(\omega) = w\left(\omega^\leftarrow\right)$ for every $\omega \in \Omega$. 
We say that $w$ is 
\textbf{repulsive} if
\[w(\omega \circ \eta) \leq w(\omega)w(\eta)\]
for every contiguous pair $(\omega,\eta) \in \Omega^2$. 
 We say that a weight function $w:\Omega \to [0,\infty)$ is \textbf{non-degenerate} if $w(\omega)=1$ for every $\omega$ with $|\omega|=0$, and $w(\omega)\geq 1$ for every $\omega$ with $|\omega|=1$.
We say that the weight function $w$ is \textbf{zero-range} if  $w(\omega \circ \eta) = w(\omega)w(\eta)$ whenever $\omega$ and $\eta$ are contiguous and disjoint. 

For brevity, we will call a weight function \textbf{good} if it is $\Gamma$-invariant, reversible, non-degenerate, zero-range, and   repulsive. 
Important examples of good weight functions include
\begin{align*}
w(\omega) &= \mathbbm{1}\left[ \omega \text{ is self-avoiding}\right]
\end{align*}
which is the weight function for self-avoiding walk, and
\begin{align*}
w(\omega) &= \exp\left[-g \sum_{0 \leq i < j \leq |\omega|} \mathbbm{1}\left(\omega(i)=\omega(j)\right)  \right]
\end{align*}for $g\geq0$, which is the weight function for weakly self-avoiding walk (a.k.a.\ the Domb-Joyce model~\cite{domb1972cluster}).  
Another very natural example is the \emph{anisotropic} self-avoiding walk on $T \times \Z^d$:
\begin{equation*}
w(\omega) = \mathbbm{1}\bigl[\omega \text{ self-avoiding}\bigr] \exp\left[ a\,\#\{\text{$\Z^d$ edges used by $\omega$}\} + b\,\#\{\text{tree edges used by $\omega$}\}\right] 
\end{equation*}for $a,b\geq 0$. This walk prefers to use $T$ edges if $a<b$ and prefers to use $\Z^d$ edges if $a > b$.
Finally, to illustrate the flexibility of the definition, let us also give examples of some more exotic good weight functions:
\begin{align*}
w(\omega) & = \mathbbm{1}\left[\text{$\omega$ does not visit any vertex more than twice} \right],\\
w(\omega) &= \mathbbm{1}\left[ \omega(i) \neq \omega(j) \text{ if } |i-j| \text{ is not prime}\right], \\
w(\omega) &= \mathbbm{1}\left[\text{the subgraph of $G$ spanned by the edges traversed by $\omega$ is a tree}\right].
\end{align*}
(Tyler Helmuth has informed us that the first of these examples is related to $\phi^6$ field theory.)

For each $\Gamma$-invariant weight function $w$, we define
\[
Z_w(n) = \sum_{\omega \in \Omega} w(\omega) \mathbbm{1}\bigl[\omega^-=0,\, |\omega|=n\bigr]
\]
to be the total weight of all walks of length $n$. For every $n$ with $Z_w(n)>0$, we define a probability measure on the set of paths of length $n$ starting at $\root$ by
\[
\P_{w,n}(\{\omega\}) = Z_w(n)^{-1}w(\omega).
\]
The two-point function $G_w(z;x)$, the susceptibility $\chi_w(z)$, and the critical parameter $z_c(w)$ are defined analogously to the case of self-avoiding walk, as are the tilted variants $\chi_w(z,\lambda),$  $z_{c,\lambda}(w)$, and $\P_{w,\lambda,n}$. 
Note that the tilted quantities all depend implicitly on the choice of $\Gamma$. Similarly to the case of SAW, if $w$ is $\Gamma$-invariant and reversible then the tilted mass-transport principle implies that
\[
Z_w(\lambda;n) = Z_w (1-\lambda; n),
\]
for every $\lambda \in \R$ and $n\geq 0$, 
and hence that  $\chi_w(\lambda;z)=\chi_w(1-\lambda;z)$ and $z_{c,\lambda}(w)=z_{c,1-\lambda}(w)$ for every $\lambda \in \R$. The same statements concerning  monotonicity of these quantities on $(-\infty,1/2]$ and $[1/2,\infty)$ hold for arbitrary good weight functions as they did for SAW, and for the same reasons.

 If $w$ is $\Gamma$-invariant and   repulsive then $Z_w(\lambda;n)$ is submultiplicative for every $\lambda$ by the cocycle identity, so that
\[
\mu_{c,\lambda}(w) := z_{c,\lambda}(w)^{-1} = \lim_{n\to\infty} Z_w(\lambda;n)^{1/n} = \inf_{n\geq 1} Z_w(\lambda;n)^{1/n} \in [-\infty,\infty)
\]
is well-defined by Fekete's Lemma. If furthermore $w$ is good then the non-degeneracy and zero-range properties imply the lower bound $\mu_{c,\lambda}\geq 1$ for every $\lambda \in \R$.

We will prove the following generalization of \cref{thm:lambda} to arbitrary good weight functions. Again, we stress that this applies in particular to any graph of the form $T \times G$, where $T$ is a regular tree of degree at least $3$ and $G$ is an arbitrary transitive graph.

\begin{thm}
\label{thm:generallambda}
Let $G$ be a connected, locally finite graph, let $\Gamma$ be a transitive nonunimodular subgroup of $\Aut(G)$, and let $w:\Omega\to [0,\infty)$ be a good weight function. 
 Then the function 
 \[ \R \to (0,z_t(w)],\quad \lambda \mapsto z_{c,\lambda}(w)\] is continuous, and is strictly increasing on $(-\infty,1/2]$. 
\end{thm}

\cref{thm:generallambda} has the following straightforward consequences, which generalize \cref{thm:tilted}. In particular, analogues of \cref{thm:SAWcounting,thm:SAWspeed} for general good weight functions $w$ follow from the untilted case $\lambda=0$.

\begin{thm}
\label{thm:generalrepulsive}
Let $G$ be a connected, locally finite graph, let $\Gamma$ be a transitive nonunimodular subgroup of $\Aut(G)$, and let $w:\Omega\to [0,\infty)$ be a good weight function. Then the following hold.
\begin{enumerate}
  \item For every $\lambda \neq 1/2$, there exists a positive constant $C_\lambda$ such that
  \[
  \frac{ z_{c,\lambda}}{z_{c,\lambda}-z} \leq  \chi_{w}(z,\lambda) \leq \frac{C_\lambda z_{c,\lambda}}{z_{c,\lambda}-z} 
\quad\text{ and }\quad
  \mu_c^n \leq Z_{w}(\lambda;n) \leq C_\lambda \mu_{c,\lambda}^n.\]
  for every $0<z<z_{c,\lambda}=z_{c,\lambda}(w)$ and $n\geq 0$.

  \item For every $z \in [0,z_t)$, there exists a positive constant $c_{z}$ such that
  \[
  G_w(z;x) \leq e^{-c_{z} d(0,x)}
  \]
  for every $x\in V$.
  \item For every $\lambda \neq 1/2$, there exist positive constants $c_\lambda,c_\lambda'$ such that
  \[
\P_{w,\lambda,n}\Bigl[\sgn(\lambda -1/2) \log \Delta(0,X_n) \geq c_\lambda n\Bigr] \geq 1-e^{-c_\lambda n}
  \]
  for every $n\geq 0$ and hence
  \[
\P_{w,\lambda,n}\Bigl[d(0,X_n) \geq c_\lambda' n\Bigr] \geq 1-e^{-c_\lambda' n}.
  \]
  for every $n\geq 0$.
\end{enumerate}
\end{thm}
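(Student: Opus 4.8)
The plan is to derive all three parts from Theorem~\ref{thm:generallambda}, using two elementary observations throughout. First, $M:=\max_{v\sim\root}|\log\Delta(\root,v)|$ is finite (since $\root$ has finite degree and $\stab_\root$ has finitely many orbits on its neighbours), so the cocycle identity gives $|\log\Delta(\root,x)|\le M\,d(\root,x)$ for all $x$. Second, for every $x$ there is a vertex $x^\ast$ with $G_w(z;x)=G_w(z;x^\ast)$, $d(\root,x^\ast)=d(\root,x)$ and $\Delta(\root,x^\ast)=\Delta(\root,x)^{-1}$: use the reversibility of $w$ to turn walks $\root\to x$ into walks $x\to\root$, then apply $\gamma\in\Gamma$ with $\gamma x=\root$, and use the $\Gamma$-invariance of $\Delta$ and the cocycle identity; consequently any pointwise bound on $G_w(z;\cdot)$ may be proved under the extra hypothesis $\Delta(\root,x)\ge 1$. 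By the symmetries $z_{c,\lambda}(w)=z_{c,1-\lambda}(w)$ and $Z_w(\lambda;n)=Z_w(1-\lambda;n)$ we may assume $\lambda<1/2$ wherever convenient. For part~(3), fix such a $\lambda$ and $\lambda'\in(\lambda,1/2]$, so $\mu_{c,\lambda'}<\mu_{c,\lambda}$ by Theorem~\ref{thm:generallambda}. Writing $N_n(h)$ for the total $w$-weight of length-$n$ walks from $\root$ ending at a vertex $x$ with $\log\Delta(\root,x)=h$, so $Z_w(\lambda;n)=\sum_h e^{\lambda h}N_n(h)$, and using $e^{\lambda h}\le e^{(\lambda'-\lambda)an}e^{\lambda' h}$ for $h\ge -an$,
\[
\P_{w,\lambda,n}\!\bigl[\log\Delta(\root,X_n)\ge -an\bigr]=\frac{\sum_{h\ge -an}e^{\lambda h}N_n(h)}{Z_w(\lambda;n)}\le \frac{e^{(\lambda'-\lambda)an}Z_w(\lambda';n)}{Z_w(\lambda;n)}\le\Bigl(e^{(\lambda'-\lambda)a}\tfrac{\mu_{c,\lambda'}}{\mu_{c,\lambda}}\Bigr)^{\!n}e^{o(n)},
\]
using $Z_w(\lambda;n)\ge\mu_{c,\lambda}^n$, $Z_w(\lambda';n)\le\mu_{c,\lambda'}^{n+o(n)}$ and $1\le\mu_{c,\lambda'}<\infty$. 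Taking $a=c_\lambda>0$ small makes the base $<1$, giving \eqref{eq:speedheight}; combining with $d(\root,X_n)\ge|\log\Delta(\root,X_n)|/M$ gives \eqref{eq:speednotheight}. The case $\lambda>1/2$ is handled identically with $\lambda'\in[1/2,\lambda)$.

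For part~(2), fix $z<z_t=z_{c,1/2}(w)$ and $z<z''<z_t$. Since $z_t$ is the radius of convergence of $\chi_w(\cdot,1/2)$, the terms $(z'')^nZ_w(1/2;n)$ are bounded, so $Z_w(1/2;n)\le C(z'')^{-n}$ for all $n$. Writing $f_n(x)$ for the weight of length-$n$ walks $\root\to x$ and noting $f_n(x)\le\Delta^{-1/2}(\root,x)Z_w(1/2;n)$ (a single summand of $Z_w(1/2;n)$), the reduction to $\Delta(\root,x)\ge 1$ gives
\[
G_w(z;x)=\sum_{n\ge d(\root,x)}z^nf_n(x)\le C\sum_{n\ge d(\root,x)}(z/z'')^n\le C'(z/z'')^{d(\root,x)},
\]
the asserted exponential decay. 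Because $z_c=z_{c,0}(w)<z_{c,1/2}(w)=z_t$ by the strict monotonicity in Theorem~\ref{thm:generallambda}, this also covers the critical value $z=z_c$ needed in Theorems~\ref{thm:SAWcounting} and~\ref{thm:SAWspeed}.

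Part~(1) is the substantive step, and the plan is to establish the bubble condition $B_w(z_{c,\lambda}):=\sum_x G_w(z_{c,\lambda};x)^2<\infty$ for every $\lambda\neq1/2$ and then invoke the classical principle that a convergent bubble forces mean-field susceptibility behaviour. For the bubble bound, assume $\lambda<1/2$; by Theorem~\ref{thm:generallambda} we have $z_{c,\lambda}<z_{c,\lambda'}(w)$ for all $\lambda'\in(\lambda,1-\lambda)$, hence $\chi_w(z_{c,\lambda},\lambda')<\infty$ for such $\lambda'$. Taking $\lambda'=1/2$ gives $\sum_x G_w(z_{c,\lambda};x)\Delta^{1/2}(\root,x)<\infty$; taking $\lambda'=\tfrac12+\delta'$ with $0<\delta'<\tfrac12-\lambda$ and using the pointwise reduction gives $G_w(z_{c,\lambda};x)\le C\exp[-(\tfrac12+\delta')|\log\Delta(\root,x)|]$. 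Writing $G_w^2=(G_w\Delta^{1/2})(G_w\Delta^{-1/2})$ and combining these two facts yields $B_w(z_{c,\lambda})\le\bigl(\sup_x G_w(z_{c,\lambda};x)\Delta^{-1/2}(\root,x)\bigr)\,\chi_w(z_{c,\lambda},1/2)<\infty$. (Since $G_w(z;\cdot)\Delta^\lambda(\root,\cdot)$ is the two-point function of the $\lambda$-tilted model, $B_w$ is exactly its bubble, so this says the tilted bubble converges at criticality.)

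Given the bubble bound, the estimate $\chi_w(z,\lambda)\le C_\lambda z_{c,\lambda}/(z_{c,\lambda}-z)$ — equivalently, by \cref{lem:chitoZ}, $Z_w(\lambda;n)\le C_\lambda\mu_{c,\lambda}^n$ — should follow by rerunning the standard self-avoiding-walk argument for the tilted weight: one decomposes a length-$(n+m)$ walk at position $n$, bounds the defect $Z_w(\lambda;n)Z_w(\lambda;m)-Z_w(\lambda;n+m)$ by a union bound over self-intersections of the two pieces, and passes to generating functions, where the resulting diagrammatic sum is controlled by $B_w(z_{c,\lambda})$; the zero-range and repulsive properties of $w$ are precisely what make this decomposition valid, and the exponential two-point decay from part~(2) (available at $z_{c,\lambda}<z_t$) supplies the quantitative control. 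I expect this last implication — carrying the classical ``convergent bubble $\Rightarrow$ mean-field susceptibility'' argument through for a general good weight function and for the tilted quantities, rather than only for strict self-avoidance on $\Z^d$ — to be the main obstacle; everything else reduces cleanly to the strict monotonicity and continuity provided by Theorem~\ref{thm:generallambda}.
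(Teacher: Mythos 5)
Parts (2) and (3) of your proposal are essentially the paper's arguments. For (2), the pointwise bound $f_n(x)\le\Delta^{-1/2}(\root,x)Z_w(1/2;n)$ together with the reversibility trick to reduce to $\Delta(\root,x)\ge1$, and the geometric series at $z<z''<z_t$, is exactly the idea; the paper phrases it through the one-line inequalities $G_w(z;x)\le(z/z')^{d(\root,x)}G_w(z';x)$ and $G_w(z';x)\le\chi_w(z';1/2)\min\{\Delta,\Delta^{-1}\}^{1/2}(\root,x)\le\chi_w(z';1/2)$, which amounts to the same thing. For (3), the paper takes $\lambda>\lambda'>1/2$ and bounds $Z_w(\lambda';n)/Z_w(\lambda;n)$ using part (1) rather than the raw submultiplicative $\mu_{c,\lambda'}^{n+o(n)}$, so the $o(n)$ error never appears and the uniform-in-$n$ constant is automatic; your version works but needs the extra step of absorbing the $e^{o(n)}$ and handling small $n$ separately, and you should take $\lambda'$ strictly in $(\lambda,1/2)$ so that part (1) is available if you want the clean bound. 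For the bubble convergence you are doing more work than the paper: writing $G_w^2=(G_w\Delta^{1/2})(G_w\Delta^{-1/2})$, obtaining a sup bound on $G_w\Delta^{-1/2}$ from a second tilt $\lambda'=1/2+\delta'$, and then summing, re-derives the paper's \cref{lem:tiltedbubble}, which instead establishes $B_w(z)\le\chi_w(z,\lambda)^2$ for \emph{every} $\lambda$ in two lines by inserting $\Delta^\lambda(\root,y)\ge\mathbbm 1(y=\root)$ and invoking the cocycle identity; with $\lambda=1/2$ this gives $B_w(z_{c,\lambda})\le\chi_w(z_{c,\lambda},1/2)^2<\infty$ directly.

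The genuine gap is the step you yourself flag as ``the main obstacle'': the implication from $B_w(z_{c,\lambda})<\infty$ to $\chi_w(z,\lambda)\le C_\lambda z_{c,\lambda}/(z_{c,\lambda}-z)$. You sketch the standard $\Z^d$ inclusion-exclusion over self-intersections, but you never prove the tilted differential inequality
\[
\frac{\chi_w(z,\lambda)^2}{B_w(z)}\ \le\ \frac{\partial}{\partial z}\bigl[z\chi_w(z,\lambda)\bigr]\ \le\ \chi_w(z,\lambda)^2,
\]
which is what the paper actually establishes (and then integrates to get \cref{cor:bubblebound}). The upper bound is soft (repulsivity plus the cocycle identity), but the lower bound is not a verbatim port of Madras--Slade: after splitting a walk $\root\to v$ at an intermediate vertex $u$, one must apply the \emph{tilted} mass-transport principle to re-root the first piece at $\root$ — picking up a factor $\Delta(\root,u)$ — and then redistribute the modular weights via the cocycle identity onto the two pieces and the intersection point so that the resulting three-factor sum is recognisable as $B_w(z)\cdot\frac{\partial}{\partial z}[z\chi_w(z,\lambda)]$. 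It is precisely this bookkeeping with $\Delta$ (together with the zero-range property supplying $w(\omega_1\circ\omega_2)\ge w(\omega_1)w(\omega_2)\mathbbm 1(\text{disjoint})$, and repulsivity giving the factorisation at the intersection) that makes the argument go through for general good $w$ and general $\lambda$; your sketch names the ingredients but does not perform the manipulation, and without it part (1) — and hence the clean constants in your part (3) — remains unproved. Also note that your appeal to ``the exponential two-point decay from part (2)'' as supplying the quantitative control is a red herring: the differential inequality is closed entirely in terms of $B_w$ and $\chi_w$ and does not use the exponential decay of $G_w$.
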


\begin{remark}
The reader may find it an enlightening (and easy) exercise to prove the third item of \cref{thm:generalrepulsive} for simple random walk (that is, in the case $w(\omega)\equiv 1$) and observe what happens in the case $\lambda =1/2$.
\end{remark}
\subsection{About the proofs and organization}

The proof of \cref{thm:lambda,thm:generallambda} starts by using Fekete's Lemma to get bounds on \emph{bridges} at $z_{c,\lambda}$.  The fact that Fekete's lemma can be used to obtain surprisingly strong bounds for critical models on graphs of exponential growth was first exploited in  \cite{Hutchcroft2016944}, and is also central to our work on percolation in the nonunimodular setting \cite{Hutchcroftnonunimodularperc}. 
We then convert this control of bridges into a control of walks: this conversion centres around a tilted version of a generating function inequality of Madras and Slade \cite{MR2986656}. 
In \cref{sec:profit} we use \cref{thm:generallambda} to prove \cref{thm:generalrepulsive}. In \cref{sec:end} we examine the critically tilted case $\lambda=1/2$, posing several open problems and giving some small partial results. 

\label{subsec:abouttheproof}


\section{Proof of Theorems \ref{thm:lambda} and \ref{thm:generallambda}}
\label{sec:short}

Let $G$ be a connected, locally finite graph, let $\Gamma\subseteq \Aut(G)$ be transitive and nonunimodular, let $\root$ be a fixed root vertex, and let $w$ be a good weight function. We define the \textbf{height} of a vertex $v$ to be $\log \Delta(\root,v)$, and define
\[\bbH = \{ \log \Delta(u,v) : u,v\in V\}\]
to be the set of height differences that appear in $G$. We say that $v$ is \textbf{higher} than $u$ if $\Delta(u,v)>1$ and that $v$ is \textbf{lower} than $u$ if $\Delta(u,v) < 1$. We also define $\bbH_+=\bbH\cap (0,\infty)$ and $\bbH_{\geq0}$ to be the sets of positive and non-negative heights that appear in $G$ respectively.

We define the \textbf{level} $L_t$ to be the set of vertices at height $t$, that is,
\[
L_t = \{v\in V : \log \Delta(\root,v)=t\}.
\] We also define
\[t_0 = \sup \left\{ \log \Delta(u,v) : u\sim v \right\}\]
to be the maximum height difference between adjacent vertices,
 and define the \textbf{slab} $S_t$ to be
\[
S_t = \{v\in V : t \leq \log \Delta(\root,v) < t+t_0\}.
\]


We say that $\omega \in \Omega$ is an \textbf{up-bridge} if its endpoint $\omega^+$ is not lower than any of its other points, and its starting point $\omega^-$ is not higher any of its other points. Similarly, we say that $\omega \in \Omega$ is a \textbf{down-bridge} if its endpoint $\omega^+$ is not higher than any of its other points, and its starting point $\omega^-$ is not lower any of its other points. Thus, $\omega$ is an up-bridge if and only if its reversal $\omega^\leftarrow$ is a down-bridge.

We will specify that a walk $\omega$ is an up-bridge by writing a superscript of `u.b.', and a down-bridge by writing a superscript of `d.b.'.
For each $t\in \R$ we define
\begin{align*}
a_w(z;t) &= \sum_{\omega\in\Omega} w(\omega)z^{|\omega|}\mathbbm{1}\bigl( \omega:\root \xrightarrow{\textrm{u.b.}} L_t \bigr)
\intertext{and}
d_w(z;t) &= \sum_{\omega\in\Omega} w(\omega)z^{|\omega|}\mathbbm{1}\bigl( \omega:\root \xrightarrow{\textrm{d.b.}} L_{-t} \bigr),
\end{align*}
both of which are equal to zero when $t\notin\bbH_{\geq0}$. 
The $\Gamma$-invariance of $w$ and the tilted mass-transport principle implies that 
\begin{align*}
a_w(z;t)&= \sum_{v \in V} \mathbbm{1}\bigl( \Delta(0,v)=e^t \bigr) \sum_{\omega \in \Omega} w(\omega)z^{|\omega|} \mathbbm{1}\bigl( \omega:\root \xrightarrow{\textrm{u.b.}} v \bigr)\\
&=  \sum_{v \in V} \Delta(0,v) \mathbbm{1}\bigl( \Delta(v,0)=e^t \bigr) \sum_{\omega \in \Omega} w(\omega)z^{|\omega|} \mathbbm{1}\bigl( \omega:v \xrightarrow{\textrm{u.b.}} \root \bigr),
\end{align*}
and applying the reversibility of $w$ and the identity $\Delta(\root,v)=\Delta(v,\root)^{-1}$ we obtain that
\begin{align}
a_w(z;t)&=
 \sum_{v \in V} \Delta(0,v)\mathbbm{1}\bigl( \Delta(\root,v)=e^{-t} \bigr) \sum_{\omega \in \Omega} w(\omega^\leftarrow)z^{|\omega^\leftarrow|} \mathbbm{1}\bigl( \omega^\leftarrow : v \xrightarrow{\textrm{u.b.}} \root \bigr)
\nonumber\\
&= \sum_{v \in V} \Delta(0,v)\mathbbm{1}\bigl( \Delta(\root,v)=e^{-t} \bigr) \sum_{\omega \in \Omega} w(\omega)z^{|\omega|} \mathbbm{1}\bigl( \omega: \root \xrightarrow{\textrm{d.b.}} v \bigr)
\nonumber
\\
&=
e^{-t} d_w(z;t)\label{eq:atod}
\end{align}
 for every $t \in \bbH_{\geq0}$. Finally, define the sequence $(A_w(z;n))_{n\geq0}$ by
\[
A_w(z;n) = \sum_{\omega \in \Omega} w(\omega)z^{|\omega|} \mathbbm{1}\bigl[ \omega: 0 \xrightarrow{u.b.} S_{nt_0} \bigr] = \sum_{n t_0 \leq t < (n+1)t_0} a_w(z;t).
\]

\begin{lemma}
\label{lem:slicksupmult}
The sequence
$A_w(z;n)$ satisfies the generalized supermultiplicative estimate
\[
A_w(z;n+m+2) \geq z^2 A_w(z;n)A_w(z;m)
\]
for every $n,m\geq 0$ and $0\leq z\leq 1$.
\end{lemma}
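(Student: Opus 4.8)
The plan is to build an injection from pairs $(\omega_1,\omega_2)$ — where $\omega_1$ is an up-bridge $0\xrightarrow{\textrm{u.b.}}S_{nt_0}$ and $\omega_2$ is an up-bridge of climb $\log\Delta(\omega_2^-,\omega_2^+)\in[mt_0,(m+1)t_0)$ — into the set of up-bridges $0\xrightarrow{\textrm{u.b.}}S_{(n+m+2)t_0}$, obtained by gluing $\omega_1$ to a $\Gamma$-translate of $\omega_2$ with a short chain of maximum-climb up-edges, and then to read off the inequality from the \emph{zero-range} property of $w$ (repulsivity alone gives the wrong direction here, since we need a lower bound on the weight of a concatenation). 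First I would record that, $\Gamma$ being nonunimodular, $t_0>0$, and that since $\{\log\Delta(0,v):v\sim 0\}$ is finite the supremum defining $t_0$ is attained; by transitivity every vertex $v$ is then the tail of some edge $e(v)$ with $\log\Delta(v,e(v)^+)=t_0$, and I fix one such $e(v)$ for each $v$.

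Given $(\omega_1,\omega_2)$, set $e_1:=e(\omega_1^+)$, let $\omega_2'$ be the translate of $\omega_2$ starting at $e_1^+$, form $\omega_1\circ e_1\circ\omega_2'$, and then repeatedly append edges $e(\cdot)$ at the current endpoint until it lies in $S_{(n+m+2)t_0}$; call the result $\Psi$. Appending $e(\cdot)$ raises the height by exactly $t_0$, and $\omega_1\circ e_1\circ\omega_2'$ ends at height $\log\Delta(0,\omega_1^+)+t_0+\log\Delta(\omega_2^-,\omega_2^+)\in[(n+m+1)t_0,(n+m+3)t_0)$, so it already lies in $S_{(n+m+1)t_0}$ or in $S_{(n+m+2)t_0}$; hence the appending step adds $0$ or $1$ edges, $\Psi$ terminates in $S_{(n+m+2)t_0}$, and $|\Psi|\le|\omega_1|+|\omega_2|+2$. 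Since a concatenation of up-bridges is again an up-bridge (its terminal vertex weakly dominates, and its initial vertex is weakly dominated by, every other vertex), $\Psi$ really is an up-bridge $0\xrightarrow{\textrm{u.b.}}S_{(n+m+2)t_0}$.

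For the weight: all vertices of $\omega_1$ have height $\le\log\Delta(0,\omega_1^+)$, whereas $e_1^+$, every vertex of $\omega_2'$, and every appended vertex have height $\ge\log\Delta(0,\omega_1^+)+t_0$, and each appended edge ends strictly above everything before it; so the pieces $\omega_1,e_1,\omega_2',e(\cdot),\dots$ are pairwise disjoint in the sense required by the zero-range property, giving $w(\Psi)=w(\omega_1)\,w(e_1)\,w(\omega_2)\prod w(\text{appended edges})\ge w(\omega_1)w(\omega_2)$ by non-degeneracy, and then $w(\Psi)z^{|\Psi|}\ge z^2\,w(\omega_1)z^{|\omega_1|}\,w(\omega_2)z^{|\omega_2|}$ since $0\le z\le1$ and $|\Psi|\le|\omega_1|+|\omega_2|+2$. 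For injectivity I recover $\omega_1$ from $\Psi$ as the vertices of $\Psi$ lying in $\bigcup_{0\le k\le n}S_{kt_0}$ (these form an initial run, since from $e_1^+$ on all vertices have height $\ge(n+1)t_0$), hence $e_1=e(\omega_1^+)$ and the leftover segment $R$ from $e_1^+$ to $\Psi^+$; the climb of $R$ lies in $[mt_0,(m+1)t_0)$ if no edge was appended and in $[(m+1)t_0,(m+2)t_0)$ if one was, so — these ranges being disjoint — the climb of $R$ determines whether $\omega_2=R$ or $\omega_2=R$ with its last edge deleted. Summing over all $(\omega_1,\omega_2)$, and using transitivity together with the $\Gamma$-invariance of $w$ and of $\Delta$ to see that $\sum_{\omega_2}w(\omega_2)z^{|\omega_2|}$ over up-bridges from $e_1^+$ with climb in $[mt_0,(m+1)t_0)$ equals $A_w(z;m)$ regardless of $e_1^+$, yields $A_w(z;n+m+2)\ge z^2A_w(z;n)A_w(z;m)$.

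The main obstacle is making $\Psi$ land in exactly $S_{(n+m+2)t_0}$. The natural fixed-length gluing — two maximum-climb up-edges, matching the $z^2$ — does not work: up-bridges into $S_{nt_0}$ need not end on the level $L_{nt_0}$, and the height of $\omega_1^+$ within its slab (and likewise the climb of $\omega_2$ within slab $m$) can be arbitrarily close to the top of the slab, so two such edges can overshoot into $S_{(n+m+3)t_0}$ with no way to steer back. The resolution is that one mandatory separating edge (forced anyway by the disjointness needed for the zero-range identity) plus "as many further maximum-climb edges as it takes" costs at most two edges and always reaches $S_{(n+m+2)t_0}$; the cost is a variable gluing length, which the injectivity argument must — and does — survive, precisely because the climb of the leftover segment $R$ records how many edges were appended.
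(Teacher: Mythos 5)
Your proof is correct and is essentially the paper's argument: you build the same injection, gluing $\omega_1$ to a translate of $\omega_2$ with one or two maximum-climb edges according to where the concatenation lands, and you recover $(\omega_1,\omega_2)$ from $\Psi$ in the same way (the initial run in the low slabs gives $\omega_1$, and the climb of the remaining piece determines whether one or two edges were inserted). The only difference from the paper is cosmetic — you phrase the insertion as ``one mandatory edge, then append until the right slab'' rather than the paper's explicit two-case choice of a length-one or length-two connector $\eta_1$ or $\eta_2$, but the cases and the bounds $w(\omega)\geq w(\omega_1)w(\omega_2)$, $|\omega|\leq|\omega_1|+|\omega_2|+2$ coincide exactly.
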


\begin{proof}

It suffices to construct for each $n,m\geq 0$ an injective function
\begin{equation*}
\bigl\{ \omega_1 \in \Omega : \omega_1: \root \xrightarrow{\textrm{u.b.}} S_{nt_0} \bigr\} \times 
\bigl\{ \omega_2 \in \Omega : \omega_2: \root \xrightarrow{\textrm{u.b.}} S_{mt_0} \bigr\} \to \bigl\{ \omega \in \Omega : \omega: \root \xrightarrow{\textrm{u.b.}} S_{(n+m+2)t_0} \bigr\}
\end{equation*}
such that
\begin{equation}
\label{eq:phidesiredw}
w(\omega) \geq  w(\omega_1)w(\omega_2) 
\end{equation}
and
\begin{equation}
\label{eq:phidesiredlength}
 |\omega| \leq |\omega_1| + |\omega_2| +  2
\end{equation}
for every pair of up-bridges $\omega_1 : \root \xrightarrow{\textrm{u.b.}} S_{nt_0}$ and $\omega_2:\root \xrightarrow{\textrm{u.b.}} S_{mt_0}$. 

  For each $v\in V$,
we fix an automorphism $\gamma_v \in \Gamma$ such that $\gamma_v \root =v$. We let 
$\eta_1$ and $\eta_2$ be paths of length one and two respectively that start at $\root$ and whose endpoints have height $t_0$ and $2t_0$ respectively.
Let $\omega_1:\root \to S_{nt_0}$ and $\omega_2:\root \to S_{mt_0}$ be up-bridges. 
Then we have that 
\[  (n+m)t_0 \leq \log \Delta(\root,\omega^+_1)+\log \Delta(\root,\omega^+_2) < (n+m+2)t_0.\]

We define $\omega$ as follows:
\begin{itemize}
\item If the sum of the heights of $\omega_1^+$ and $\omega_2^+$ is greater than or equal to $(n+m+1)t_0$, we let $\omega$ be the composition $\omega_1 \circ \gamma_1 \eta_1 \circ \gamma_2 \omega_2$, where $\gamma_1=\gamma_{\omega_1^+}$ sends $\root$ to the final vertex of $\omega_1$, and $\gamma_2 = \gamma_{(\gamma_1 \eta_1)^+}$ sends $\root$ to the final vertex of $\gamma_1 \eta_1$. 
\item If the sum of the heights of $\omega_1^+$ and $\omega_2^+$ is strictly less than $(n+m+1)t_0$, we let $\omega$ be the composition $\omega_1 \circ \gamma_1 \eta_2 \circ \gamma_2 \omega_2$, where $\gamma_1=\gamma_{\omega_1^+}$ sends $\root$ to the final vertex of $\omega_1$, and $\gamma_2 = \gamma_{(\gamma_1 \eta_2)^+}$ sends $\root$ to the final vertex of $\gamma_1 \eta_2$. 
\end{itemize}
The path $\omega$ is clearly an up-bridge $0 \to S_{(n+m+2)t_0}$, and clearly satisfies the length bound \eqref{eq:phidesiredlength}, while the weight bound \eqref{eq:phidesiredw} follows from the assumption that $w$ is non-degenerate and zero-range. 
To see that the function $(\omega_1,\omega_2)\mapsto \omega$ is injective, observe that $\omega_1$ is necessarily equal to the longest initial segment of $\omega$ that has height strictly less than $(n+1)t_0$ at its endpoint, while $\omega_2$ is necessarily equal to the longest final segment of $\omega$ that has height difference strictly less than $(m+1)t_0$ between its starting point and endpoint.  \qedhere
\end{proof}

Next, we define $H^+_n$ to be the \textbf{upper half-space} $H^+_n := \bigcup_{s\geq nt_0} L_s$, and define
\[
b_w(z;n) = \sum_{\omega \in \Omega} w(\omega)z^{|\omega|} \mathbbm{1}\bigl[ \omega: 0 \to H^+_n\bigr].
\]
The following lemma complements \cref{lem:slicksubmult} and will be used to prove continuity of $\lambda \mapsto z_{c,\lambda}(w)$.

\begin{lemma}
\label{lem:slicksubmult}
The sequence 
$b_w(z;n)$ satisfies the generalized submultiplicative estimate
\[
b_w(z;n+m+1) \leq b_w(z;n)b_w(z;m)
\]
for every $n,m\geq 0$ and $z\geq 0$.
\end{lemma}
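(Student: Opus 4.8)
The plan is to prove this by the same mechanism as \cref{lem:slicksupmult}, but run in the opposite direction and with half-spaces in place of slabs. Concretely, for each $n,m\geq 0$ I will construct an injective map
\[
\bigl\{ \omega \in \Omega : \omega : \root \to H^+_{n+m+1} \bigr\} \longrightarrow \bigl\{ \omega_1 \in \Omega : \omega_1 : \root \to H^+_{n} \bigr\} \times \bigl\{ \omega_2 \in \Omega : \omega_2 : \root \to H^+_{m} \bigr\},\quad \omega\mapsto(\omega_1,\omega_2),
\]
with the properties $w(\omega) \leq w(\omega_1)w(\omega_2)$ and $|\omega| = |\omega_1|+|\omega_2|$ for every $\omega$ in the domain. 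Granting this, multiplying by $z^{|\omega|}=z^{|\omega_1|}z^{|\omega_2|}$, summing over the domain, and bounding the resulting sum over images by the full sum over pairs (legitimate by injectivity) gives $b_w(z;n+m+1)\leq b_w(z;n)b_w(z;m)$ at once.

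To build the map I will split $\omega:\root\to H^+_{n+m+1}$ at the last vertex lying strictly below height $(n+1)t_0$: let $i^\ast$ be the largest index with $\log\Delta(\root,\omega(i^\ast))<(n+1)t_0$. This exists because $\root$ has height $0<(n+1)t_0$ (using $t_0>0$, which holds by nonunimodularity), and $i^\ast<|\omega|$ because $\omega^+$ has height at least $(n+m+1)t_0\geq (n+1)t_0$. By maximality $\omega(i^\ast+1)$ has height at least $(n+1)t_0$, and since $\omega(i^\ast)\sim\omega(i^\ast+1)$ the cocycle identity and the definition of $t_0$ give $\log\Delta(\root,\omega(i^\ast))\geq \log\Delta(\root,\omega(i^\ast+1))-t_0\geq nt_0$; hence the initial segment $\omega_1:=\omega|_{[0,i^\ast]}$ is a walk $\root\to H^+_n$. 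Fixing, as in the previous proof, automorphisms $\gamma_v\in\Gamma$ with $\gamma_v\root=v$, let $\omega_2:=\gamma_{\omega_1^+}^{-1}\bigl(\omega|_{[i^\ast,|\omega|]}\bigr)$, a path starting at $\root$; by the cocycle identity its endpoint has height $\log\Delta(\root,\omega^+)-\log\Delta(\root,\omega(i^\ast))>(n+m+1)t_0-(n+1)t_0=mt_0$, so $\omega_2:\root\to H^+_m$. The map is injective since $\omega$ is recovered from $(\omega_1,\omega_2)$ as $\omega_1\circ\gamma_{\omega_1^+}\omega_2$. Finally $\omega=\omega_1\circ(\gamma_{\omega_1^+}\omega_2)$ is a genuine concatenation of contiguous paths, so $|\omega|=|\omega_1|+|\omega_2|$ exactly, and repulsiveness together with the $\Gamma$-invariance $w(\gamma_{\omega_1^+}\omega_2)=w(\omega_2)$ yields $w(\omega)\leq w(\omega_1)w(\omega_2)$.

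I do not expect any real obstacle here. The only point requiring care is the choice of the split index and the check that the split vertex lands in the height window $[nt_0,(n+1)t_0)$ — this is exactly where the shift by $1$ in the index and the role of $t_0$ as the maximal height gap across an edge come in — and, in contrast with \cref{lem:slicksupmult}, no stray powers of $z$ are lost because the concatenation here is exact rather than padded by auxiliary edges, which is why the estimate is clean submultiplicativity with no prefactor.
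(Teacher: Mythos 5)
Your proof is correct and follows essentially the same approach as the paper: split a walk $\omega:\root\to H^+_{n+m+1}$ at a vertex lying in the slab $S_{nt_0}$ and use repulsivity together with $\Gamma$-invariance to decouple the two pieces. The only cosmetic difference is the choice of split index — you take the last time $\omega$ is strictly below height $(n+1)t_0$, whereas the paper takes the first time $\omega$ enters $H^+_n$ — but both land the split vertex in $S_{nt_0}$ and yield the same clean submultiplicativity with no wasted factor of $z$.
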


\begin{proof}
For each $v\in V$, let $\gamma_v\in \Gamma$ be such that $\gamma_v \root = v$, and for each $n\geq 0$ let 
\[H_n^+(v)=\gamma_v H_n^+ = \bigl\{u \in V : \log \Delta(v,u) \geq n t_0\bigr\}.\]
Let $\omega:0\to H^+_{n+m+1}$. Write $\omega=\omega_1\circ \omega_2$, where $\omega_1$ is the portion of $\omega$ up until it enters $H^+_n$ for the first time and $\omega_2$ is the remaining portion of $\omega$. Then $\omega^+_1 \in S_{nt_0}$ by definition of $t_0$, and it follows that $H^+_{n+m+1} \subseteq  H^+_{m}(\omega_1^+)$. 
Thus, using repulsivity of $w$ to bound $w(\omega)\leq w(\omega_1)w(\omega_2)$, we have that
\begin{align*}
b_w(z;n+m+1) \leq \sum_{\omega_1:\root \to H^+_n} w(\omega_1)z^{|\omega_1|}  \sum_{\omega_2:\omega_1^+ \to   H^+_m(\omega_1^+) } w(\omega_2)z^{|\omega_2|}  = b_w(z;n) b_w(z;m),
\end{align*}
where transitivity of $\Gamma$ and $\Gamma$-invariance of $w$ are used in the second line.
\end{proof}

We now recall Fekete's lemma, one form of which states that if $(c_n)_{n\geq 0}$ is a sequence taking values in $[-\infty,\infty]$ that  satisfies the generalized subadditive inequality
\[
c_{n+m+n_0} \leq c_n + c_m + C
\]
for some constants $n_0$ and $C$, then 
\[
\lim_{n\to\infty} \frac{1}{n}c_n = \inf_{n\geq0} \frac{c_n + C}{n+n_0} \in [-\infty,\infty],
\]
so that in particular the limit on the left-hand side exists. 

Applying Fekete's Lemma
  in light of \cref{lem:slicksupmult,lem:slicksubmult}, we obtain that the quantities
\begin{align}
\alpha_w(z)= - \lim_{n\to\infty} \frac{1}{n t_0} \log A_w(z;n) 
\quad \text{ and } \quad
\beta_w(z) =  - \lim_{n\to\infty} \frac{1}{n t_0} \log b_w(z;n) 
\end{align}
are well-defined (as elements of $[-\infty,\infty]$) and that
\begin{equation}
\label{eq:slickalpha}
A_w(z;n) \leq \frac{1}{z^2} e^{-\alpha_w(z)t_0(n+2)} \quad \text{ and } \quad b_w(z;n) \geq e^{-\beta_w(z)t_0(n+1)}
\end{equation}
for every $0\leq z \leq 1$ and $n\geq 0$.  Note that we trivially have that $A_w(z;n)\leq b_w(z;n)$ and hence that 
\begin{equation}
\label{eq:alphageqbeta}
\alpha_w(z)\geq \beta_w(z)
\end{equation} for every $0\leq z \leq 1$.

\begin{lemma}
\label{lem:slickbetabound}
Let $G$ be a connected, locally finite graph, let $\Gamma \subseteq \Aut(G)$ be transitive and nonunimodular, and let $w:\Omega \to [0,\infty)$ be a good weight function.
Then $\chi_w(z,\lambda)<\infty$ if and only if $\beta_w(z)>\max\{\lambda,1-\lambda\}$. In particular, $\beta_w(z_{c,\lambda}(w))\leq\max\{\lambda,1-\lambda\}$.
\end{lemma}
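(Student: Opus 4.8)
The plan is to compare $\chi_w(z,\lambda)$ directly with the half‑space generating functions $b_w(z;n)$, for which \cref{lem:slicksubmult} together with Fekete's lemma already supplies two‑sided exponential control: the lower bound $b_w(z;n)\geq e^{-\beta_w(z)t_0(n+1)}$ recorded in \eqref{eq:slickalpha}, and the asymptotics $b_w(z;n)=e^{-(\beta_w(z)+o(1))\,t_0 n}$ as $n\to\infty$. By the symmetry $\chi_w(z,\lambda)=\chi_w(z,1-\lambda)$ it suffices to treat $\lambda\geq 1/2$, so that $\max\{\lambda,1-\lambda\}=\lambda$. Write $c_w(z;t)=\sum_{\omega\in\Omega}w(\omega)z^{|\omega|}\mathbbm{1}(\omega:\root\to L_t)$ for the generating function of walks from $\root$ ending at height $t$, so that $\chi_w(z,\lambda)=\sum_{t\in\bbH}e^{\lambda t}c_w(z;t)$ while $b_w(z;n)=\sum_{t\in\bbH,\ t\geq nt_0}c_w(z;t)$.

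First I note the level symmetry $c_w(z;t)=e^{-t}c_w(z;-t)$, which follows from the same mass‑transport‑plus‑reversibility manipulation that produced \eqref{eq:atod}, now applied to single levels rather than half‑spaces; since $\lambda\geq 1/2$ this yields $\sum_{t<0}e^{\lambda t}c_w(z;t)\leq\sum_{t>0}e^{\lambda t}c_w(z;t)$, so $\chi_w(z,\lambda)$ and $\sum_{t\geq 0}e^{\lambda t}c_w(z;t)$ differ by at most a factor $2$. Grouping the levels $t\geq 0$ into the blocks $[nt_0,(n+1)t_0)$, using that $e^{\lambda t}$ lies between $e^{\lambda nt_0}$ and $e^{\lambda(n+1)t_0}$ on the $n$th block and that $\sum_{nt_0\leq t<(n+1)t_0}c_w(z;t)=b_w(z;n)-b_w(z;n+1)$, a one‑line Abel summation shows that $\sum_{t\geq 0}e^{\lambda t}c_w(z;t)$ is comparable, up to multiplicative constants depending only on $\lambda$ and $t_0$, to $\sum_{n\geq 0}e^{\lambda nt_0}b_w(z;n)$, with all of these quantities finite or infinite together. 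Hence $\chi_w(z,\lambda)<\infty$ if and only if $\sum_{n\geq 0}e^{\lambda nt_0}b_w(z;n)<\infty$.

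It remains to show $\sum_{n\geq 0}e^{\lambda nt_0}b_w(z;n)<\infty\iff\beta_w(z)>\lambda$. If $\beta_w(z)\leq\lambda$ then \eqref{eq:slickalpha} gives $e^{\lambda nt_0}b_w(z;n)\geq e^{\lambda nt_0}e^{-\beta_w(z)t_0(n+1)}\geq e^{-\lambda t_0}$ for every $n$, so the series diverges. Conversely, suppose $\beta_w(z)>\lambda$ and fix $\varepsilon\in(0,\beta_w(z)-\lambda)$; by the definition of $\beta_w(z)$ there is $N$ with $b_w(z;n)\leq e^{-(\beta_w(z)-\varepsilon)t_0 n}$ for $n\geq N$, so the tail $\sum_{n\geq N}e^{\lambda nt_0}b_w(z;n)$ converges geometrically, and since $b_w(z;\cdot)$ is nonincreasing it only remains to verify that $b_w(z;0)<\infty$.

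Establishing $b_w(z;0)<\infty$ whenever $\beta_w(z)>1/2$ is the main obstacle. When $z<z_t$ it is automatic, since $e^{t/2}\geq 1$ for $t\geq 0$ gives $b_w(z;0)=\sum_{t\geq 0}c_w(z;t)\leq\sum_{t\in\bbH}e^{t/2}c_w(z;t)=\chi_w(z,1/2)<\infty$. The real content is to rule out the scenario in which $\beta_w(z)>1/2$ while some $b_w(z;n)$ — equivalently, some block sum $\sum_{nt_0\leq t<(n+1)t_0}c_w(z;t)$ — is infinite. To do this I would prove that an infinite weighted family of walks from $\root$ ending in a fixed block $[n_\ast t_0,(n_\ast+1)t_0)$ forces $b_w(z;n)=\infty$ for \emph{every} $n$, hence $\beta_w(z)=-\infty$: for $n_\ast\geq 1$ one concatenates translated copies of such walks exactly as in the proof of \cref{lem:slicksupmult} (the zero‑range property bookkeeps the weights and injectivity is as there), climbing by roughly $n_\ast t_0$ at each step; for $n_\ast=0$ one instead appends to each such walk a short ascent into $H^+_n$ taken disjoint from it (possible because the walk is finite and one can always climb in a fresh direction), again using the zero‑range property on the weights and recovering the walk as the portion of the combined path preceding its terminal monotone ascent. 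Granting $b_w(z;0)<\infty$, the equivalence $\chi_w(z,\lambda)<\infty\iff\beta_w(z)>\max\{\lambda,1-\lambda\}$ is complete; for the final assertion, $\beta_w$ is lower semicontinuous (being a supremum over $n\geq 0$ of upper semicontinuous functions of $z$, namely $z\mapsto-\tfrac{1}{t_0(n+1)}\log b_w(z;n)$, since each $b_w(\cdot\,;n)$ is a power series with nonnegative coefficients), so if $\beta_w(z_{c,\lambda})>\max\{\lambda,1-\lambda\}$ then the same would hold for some $z>z_{c,\lambda}$, forcing $\chi_w(z,\lambda)<\infty$ and contradicting that $z_{c,\lambda}$ is the radius of convergence of $\chi_w(\cdot,\lambda)$; hence $\beta_w(z_{c,\lambda})\leq\max\{\lambda,1-\lambda\}$.
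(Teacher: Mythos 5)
Your overall strategy---bound $\chi_w(z,\lambda)$ by the half-space generating functions $b_w(z;n)$ and then invoke the Fekete bounds for $\beta_w(z)$---is the same as the paper's, and the first half of your argument (the level symmetry $c_w(z;t)=e^{-t}c_w(z;-t)$, the reduction to $\lambda\geq 1/2$, and the block/Abel-summation comparison with $\sum_n e^{\lambda n t_0}b_w(z;n)$) is a correct and essentially equivalent route to the paper's direct inequality $\chi_w(z,\lambda)\leq\sum_n e^{\max\{\lambda,1-\lambda\}t_0 n}b_w(z;n)$. Where you diverge, however, there are two problems.

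First, your treatment of the ``main obstacle'' (ruling out $b_w(z;n)=\infty$ for small $n$ while $\beta_w(z)>1/2$) does not work as stated. Walks ending in a slab $[n_*t_0,(n_*+1)t_0)$ are not up-bridges, so the concatenation-and-decomposition argument of \cref{lem:slicksupmult} cannot be reused ``exactly'': the injectivity there relies crucially on the bridge property (recovering $\omega_1$ as the longest initial segment below a given height), and the zero-range bookkeeping requires the two pieces to be disjoint, which is not guaranteed once the pieces can overshoot in both directions. For $n_*=0$, the claim that one can ``always climb in a fresh direction'' is simply false in general (e.g.\ in $T_k$ with $\Gamma_\xi$, a vertex has a unique upward neighbour, which the walk may already have used, so for SAW the extension has weight $0$). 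The paper does not attempt to prove this auxiliary fact either; in the actual applications it is moot because the lemma is only ever invoked at $z\leq z_t(w)$, where the power-series observation in \cref{lem:slickcontinuity} gives $b_w(z;n)<\infty$ for all $n$ whenever $z<z_t$, and the ``in particular'' clause is only ever used at $z_{c,\lambda}<z_t$ for $\lambda\neq 1/2$. Second, your derivation of the ``in particular'' statement via lower semicontinuity of $\beta_w$ is both unnecessary and unjustified: a supremum of upper semicontinuous functions is in general \emph{not} lower semicontinuous (it is the supremum of \emph{lower} semicontinuous functions that is lower semicontinuous), so the parenthetical reason you give is wrong. The paper's route is much shorter: $\chi_w(z_{c,\lambda},\lambda)=\sum_n z_{c,\lambda}^n Z_w(\lambda;n)\geq\sum_n 1=\infty$ by the Fekete lower bound $Z_w(\lambda;n)\geq\mu_{c,\lambda}^n$, and then $\beta_w(z_{c,\lambda})\leq\max\{\lambda,1-\lambda\}$ follows directly from the equivalence already proved.
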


\begin{proof}
By definition,
\[
\chi_w(z,\lambda) = \sum_{v \in V}  \sum_{\omega \in \Omega} w(\omega) z^{|\omega|} \mathbbm{1}\bigl(\omega : \root \to v \bigr) \Delta^\lambda(0,v) 
\]
for every $\lambda \in \R$ and $z \geq 0$.  Using the assumptions that $\Gamma$ is transitive and $w$ is $\Gamma$-invariant and reversible, we apply the the tilted mass-transport principle to obtain that
\begin{multline*}
\sum_{v \in V}  \sum_{\omega \in \Omega} w(\omega) z^{|\omega|} \mathbbm{1}\bigl( \Delta(\root,v) < 1,\, \omega : \root \to v \bigr) \Delta^\lambda(0,v)\\ = \sum_{v \in V}  \sum_{\omega \in \Omega} w(\omega) z^{|\omega|} \mathbbm{1}\bigl( \Delta(\root,v) > 1,\, \omega : \root \to v \bigr) \Delta^{1-\lambda}(\root,v),
\end{multline*}
and hence that 
\begin{align*}
\chi_w(z,\lambda) &\asymp \sum_{v \in V}  \sum_{\omega \in \Omega} w(\omega) z^{|\omega|} \mathbbm{1}\bigl( \Delta(\root,v) \geq 1,\, \omega : \root \to v \bigr) \left[\Delta^\lambda(\root,v) +\Delta^{1-\lambda}(\root,v)\right]\\
&\asymp \sum_{v \in V}  \sum_{\omega \in \Omega} w(\omega) z^{|\omega|} \mathbbm{1}\bigl( \Delta(\root,v) \geq 1,\, \omega : \root \to v \bigr) \Delta^{\max\{\lambda,1-\lambda\}}(\root,v).
\end{align*}
Indeed, $\chi_w(z,\lambda)$ is upper bounded by twice the right-hand side of the second line and lower bounded by half the right-hand side of the second line. 
 We deduce that
\[
\chi_w(z,\lambda) \leq \sum_{n\geq 0} e^{\max\{\lambda,1-\lambda\}t_0n} b_w(z;n),
\]
and consequently that $\chi_w(z,\lambda)<\infty$ if $\beta_w(z) > \max\{\lambda,1-\lambda\}$. 
 Conversely, if $\chi_w(z,\lambda)<\infty$ then we have that
 \begin{equation*}
 \limsup_{n\to\infty}
e^{\max\{\lambda,1-\lambda\}t_0 n} b_w(z;n) \leq \limsup_{n\to\infty} 
 \sum_{v\in H_n^+} \sum_{\omega \in \Omega} w(\omega) z^{|\omega|} \mathbbm{1}\bigl(  \omega : \root \to v \bigr) \Delta^{\max\{\lambda,1-\lambda\}}(\root,v) =0,
 \end{equation*}
and hence by \eqref{eq:slickalpha} that $\beta_w(z) > \max\{\lambda,1-\lambda\}$ whenever $\chi_w(z,\lambda)<\infty$. 

Finally, the bound $\beta_w(z_{c,\lambda}(w))\leq \max\{\lambda,1-\lambda\}$ follows since  $\chi_w(z_{c,\lambda},\lambda)=\infty$ by submultiplicativity of $Z_w(\lambda,n)$.
\end{proof}

Moreover, we have the following.

\begin{lemma}
\label{lem:slickcontinuity}
Let $G$ be a connected, locally finite graph, let $\Gamma \subseteq \Aut(G)$ be transitive and nonunimodular, and let $w:\Omega \to [0,\infty)$ be a good weight function. Then  
$\alpha_w(z)$ is left-continuous on $(0,\infty)$ and $\beta_w(z)$ is right-continuous on $(0,z_t(w))$. Moreover, both $\alpha_w(z)$ and $\beta_w(z)$ are strictly decreasing when they are positive. 
\end{lemma}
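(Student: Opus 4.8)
The plan is to prove the four assertions in turn — monotonicity, strict monotonicity, left-continuity of $\alpha_w$, and right-continuity of $\beta_w$ on $(0,z_t(w))$ — with monotonicity supplying half of each continuity statement for free, so that the real work is an upper-semicontinuity argument for $\alpha_w$ and a short bootstrap from \cref{lem:slickbetabound} for $\beta_w$.

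\textbf{Monotonicity, strictness, finiteness.} For each fixed $n$ the functions $z\mapsto A_w(z;n)$ and $z\mapsto b_w(z;n)$ are power series in $z$ with non-negative coefficients, hence non-decreasing; applying $-(nt_0)^{-1}\log(\cdot)$ and letting $n\to\infty$ shows $\alpha_w$ and $\beta_w$ are non-increasing on $(0,\infty)$. For strictness I would use the height bound that every up-bridge $\omega:\root\xrightarrow{\mathrm{u.b.}}S_{nt_0}$ and every walk $\omega:\root\to H^+_n$ satisfies $|\omega|\geq n$ (by the cocycle identity the height changes by at most $t_0$ at each step, while $\omega^+$ has height at least $nt_0$); hence $A_w(z';n)\geq(z'/z)^nA_w(z;n)$ and $b_w(z';n)\geq(z'/z)^nb_w(z;n)$ for $0<z<z'$, so that $\alpha_w(z')\leq\alpha_w(z)-t_0^{-1}\log(z'/z)$ and likewise for $\beta_w$. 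Since the trivial path at $\root$ is an up-bridge into $S_0$ we have $A_w(z;0)\geq1$, so the Fekete representation below forces $\alpha_w(z)\leq-t_0^{-1}\log\min\{z,1\}<\infty$ on $(0,\infty)$, and then $\beta_w\leq\alpha_w<\infty$ by \eqref{eq:alphageqbeta}; combined with the displayed inequalities this gives strict monotonicity wherever $\alpha_w$, resp.\ $\beta_w$, is positive.

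\textbf{Left-continuity of $\alpha_w$.} The construction in the proof of \cref{lem:slicksupmult} glues $\omega_1$, a connecting path of length $1$ or $2$, and $\omega_2$, so the same injection in fact yields $A_w(z;n+m+2)\geq\min\{z,1\}^2A_w(z;n)A_w(z;m)$ for every $z>0$; Fekete's lemma then gives
\[
\alpha_w(z)=\frac{1}{t_0}\inf_{n\geq0}\frac{-\log A_w(z;n)-2\log\min\{z,1\}}{n+2},\qquad z\in(0,\infty).
\]
For each $n$ the map $z\mapsto A_w(z;n)\in[0,\infty]$ is an increasing limit of polynomials, hence lower semi-continuous, so $z\mapsto-\log A_w(z;n)$ is upper semi-continuous and therefore so is each term of the infimum; an infimum of upper-semicontinuous functions is upper semi-continuous, so $\alpha_w$ is upper semi-continuous on $(0,\infty)$. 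An upper-semicontinuous non-increasing function is left-continuous, which is the claim.

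\textbf{Right-continuity of $\beta_w$.} The analogous Fekete identity expresses $\beta_w$ as a \emph{supremum} of such terms, which gives only lower semi-continuity, so instead I would argue by contradiction from \cref{lem:slickbetabound}. Since $z_{c,\mu}(w)$ is the radius of convergence of $z\mapsto\chi_w(z,\mu)$ and $\chi_w(z_{c,\mu},\mu)=\infty$ by submultiplicativity of $Z_w(\mu;n)$ in $n$, \cref{lem:slickbetabound} gives, for every $\mu\geq1/2$,
\[
\{z>0:\beta_w(z)>\mu\}=\{z>0:\chi_w(z,\mu)<\infty\}=(0,z_{c,\mu}(w)),
\]
an interval open at its right endpoint. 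Fix $z_0\in(0,z_t(w))$; then $z_0<z_t=z_{c,1/2}$ gives $\chi_w(z_0,1/2)<\infty$ and hence $\beta_w(z_0)>1/2$. If $\beta_w$ had a downward jump at $z_0$, say $\beta_w(z_0+):=\lim_{z\downarrow z_0}\beta_w(z)<\beta_w(z_0)$, then since $\beta_w(z_0)>1/2$ we could choose $\mu\geq1/2$ with $\beta_w(z_0+)<\mu<\beta_w(z_0)$; the set $\{z>0:\beta_w(z)>\mu\}$ would then contain $z_0$ but, by monotonicity of $\beta_w$, no point $z>z_0$, contradicting that this set is the right-open interval $(0,z_{c,\mu})$. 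Hence $\beta_w(z_0+)=\beta_w(z_0)$, as required. The only genuinely delicate point is this last step: it is exactly where the restriction to the tiltable phase $(0,z_t)$ is used, and it relies on the super-level sets of $\beta_w$ seen by $\chi_w$ being right-open, i.e.\ on the strict inequality $\chi_w(z_{c,\mu},\mu)=\infty$. The other three assertions are soft: monotonicity is immediate, strictness is a one-line height estimate, and the left-continuity of $\alpha_w$ is a routine semicontinuity argument once the supermultiplicativity of $A_w$ is in hand.
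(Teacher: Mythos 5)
Your proof is correct. Monotonicity, strict monotonicity, and the left-continuity of $\alpha_w$ are handled essentially as in the paper: both arguments rest on the generalized Fekete representation of $\alpha_w$ as an infimum and on each $A_w(\,\cdot\,;n)$ being a power series with non-negative coefficients (hence lower semicontinuous, hence left-continuous); your semicontinuity phrasing is a clean repackaging of the paper's explicit estimate chase via \eqref{eq:slickalpha}. Where you genuinely diverge is the right-continuity of $\beta_w$ on $(0,z_t(w))$. The paper observes that each $b_w(\,\cdot\,;n)$ has radius of convergence at least $z_t(w)$, hence is continuous there, and runs the mirror-image Fekete-plus-limit argument; you instead derive right-continuity from the super-level-set identity $\{z>0:\beta_w(z)>\mu\}=(0,z_{c,\mu}(w))$ supplied by \cref{lem:slickbetabound}, together with the divergence $\chi_w(z_{c,\mu},\mu)=\infty$. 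Both are valid, and your route has the merit of making the role of the tiltable phase completely explicit. One inaccuracy in your framing, though: you say the Fekete supremum for $\beta_w$ ``gives only lower semi-continuity'' and pivot away — but for a non-increasing function lower semicontinuity is precisely right-continuity, and on $(0,z_t(w))$ each term of that supremum is in fact continuous (since $b_w(\,\cdot\,;n)$ is there within its radius of convergence), so the supremum is lower semicontinuous on $(0,z_t(w))$, which is exactly what is wanted and exactly what the paper's ``similar'' argument exploits. The pivot was therefore unnecessary, although the alternative you supply is a correct and arguably more transparent substitute.
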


\begin{proof}
Both $\alpha_w(z)$ and $\beta_w(z)$ are clearly decreasing in $z$ for $z>0$. 
For each $n\geq 0$, $A_w(z;n)$ and $b_w(z;n)$ are both defined as power series in $z$ with non-negative coefficients. It follows that they are each left-continuous in $z$ for $z>0$ and are continuous in $z$ within their respective radii of convergence, which are always at least $z_t(w)$ by the trivial bound
\[
A_w(z;n)\leq b_w(z;n)\leq \chi_w(z,\lambda),
\] 
which holds for every $z,n,\lambda \geq 0$.
If $z>0$ and we define \[\alpha_w(z-):= \inf_{0<\eps < z} \alpha_w(z-\eps) = \lim_{\eps\downarrow 0} \alpha_w(z-\eps),\]
then \eqref{eq:slickalpha} implies that
\[ A_w(z-\eps;n) \leq \frac{1}{(z-\eps)^2} e^{-\alpha_w(z-\eps)t_0(n+2)} \leq \frac{1}{(z-\eps)^2} e^{-\alpha_w(z-)t_0(n+2)} \]
for every $n\geq 0$.
It follows by left-continuity of $A_w(z;n)$ that the bound
\[
A_w(z;n) \leq \frac{1}{z^2} e^{-\alpha_w(z-)t_0(n+2)}
\]
also holds. This implies that $\alpha_w(z-) \leq \alpha_w(z)$ for every $z>0$, which is equivalent to left-continuity since $\alpha_w(z)$ is decreasing. The proof of the claim concerning right-continuity of $\beta_w(z)$ on $(0,z_t(w))$ is similar.

The claim that $\alpha_w(z)$ and $\beta_w(z)$ are strictly decreasing when they are positive follows from the trivial inequalities
\[A_w(z';n) \geq \left(\frac{z'}{z}\right)^{n} A_w(z;n) \quad \text{ and } \quad b_w(z';n) \geq \left(\frac{z'}{z}\right)^{n} b_w(z;n),\]
which hold for every $n\geq 0$ and $z'\geq z > 0$.
\end{proof}

\cref{lem:slickcontinuity} has the following very useful consequence.

\begin{lemma}
\label{cor:slickalphabound}
Let $G$ be a connected, locally finite graph, let $\Gamma \subseteq \Aut(G)$ be transitive and nonunimodular, and let $w:\Omega \to [0,\infty)$ be a good weight function. Then 
$\alpha_w(z_{c,\lambda}(w))\geq \max\{\lambda,1-\lambda\}$ for every $\lambda \in \R$.
\end{lemma}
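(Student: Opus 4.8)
The plan is to read off the bound from three things already in hand: the trivial sandwich $\alpha_w(z)\ge\beta_w(z)$ of \eqref{eq:alphageqbeta}, the finiteness criterion for $\chi_w(z,\lambda)$ from \cref{lem:slickbetabound}, and the one-sided regularity of $\alpha_w$ from \cref{lem:slickcontinuity}; no genuinely new ingredient is required. Fix $\lambda\in\R$ and abbreviate $m=\max\{\lambda,1-\lambda\}$. First I would record the elementary facts that $z_{c,\lambda}(w)$ is the radius of convergence of $z\mapsto\chi_w(z,\lambda)$ and that, since $w$ is good, $z_{c,\lambda}(w)\le z_{c,1/2}(w)=z_t(w)=\mu_{c,1/2}(w)^{-1}\le 1$ (monotonicity of $z_{c,\lambda}(w)$ in $\lambda$ together with $\mu_{c,1/2}(w)\ge1$), so that $0<z_{c,\lambda}(w)\le1$. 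If one is anxious about the degenerate case $z_{c,\lambda}(w)=0$, note this cannot happen for a good weight function on a locally finite transitive graph, and in any event the claimed inequality then reads $\alpha_w(0)=+\infty\ge m$ and is vacuous.

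Next I would show that $\alpha_w(z)>m$ for every $z$ in the open interval $\bigl(0,z_{c,\lambda}(w)\bigr)$. Indeed, for such $z$ the power series defining $\chi_w(z,\lambda)$ converges, so $\chi_w(z,\lambda)<\infty$, whence \cref{lem:slickbetabound} gives $\beta_w(z)>m$; and since $z\le1$, the inequality \eqref{eq:alphageqbeta} then upgrades this to $\alpha_w(z)\ge\beta_w(z)>m$.

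Finally I would let $z\uparrow z_{c,\lambda}(w)$. By \cref{lem:slickcontinuity} the function $\alpha_w$ is decreasing and left-continuous on $(0,\infty)$, and $z_{c,\lambda}(w)>0$, so
\[
\alpha_w\bigl(z_{c,\lambda}(w)\bigr)=\lim_{z\uparrow z_{c,\lambda}(w)}\alpha_w(z)\ \ge\ m=\max\{\lambda,1-\lambda\},
\]
which is exactly the assertion. The only point that needs care — and the reason \cref{lem:slickcontinuity} is phrased with $\alpha_w$ \emph{left}-continuous — is that $\chi_w(\cdot,\lambda)$ is finite precisely on an interval lying to the \emph{left} of $z_{c,\lambda}(w)$, so one must approach the critical point from below; the right-continuity of $\beta_w$ is of no help here and would point in the wrong direction. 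Everything else is immediate.
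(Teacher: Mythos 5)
Your proof is correct and follows essentially the same three-step route as the paper: from \cref{lem:slickbetabound} and $\alpha_w\geq\beta_w$ deduce $\alpha_w(z)>\max\{\lambda,1-\lambda\}$ on $(0,z_{c,\lambda}(w))$, then pass to the limit $z\uparrow z_{c,\lambda}(w)$ via left-continuity of $\alpha_w$ from \cref{lem:slickcontinuity}. The only extra material you supply — the check that $z_{c,\lambda}(w)\leq 1$ to justify invoking \eqref{eq:alphageqbeta} — is a harmless bit of caution (and in any case $A_w(z;n)\leq b_w(z;n)$, hence $\alpha_w\geq\beta_w$, holds for all $z>0$).
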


\begin{proof} We trivially have that $\alpha_w(z)\geq \beta_w(z)$ for every $z >0$, and hence by \cref{lem:slickbetabound} that $\alpha_w(z) > \max\{\lambda,1-\lambda\}$ for every $0<z<z_{c,\lambda}(w)$. Thus, the claim follows by left-continuity of $\alpha_w(z)$. 
\end{proof}

\subsection{Relating walks and bridges}

We now wish to relate the quantities $\alpha_w(z)$ and $\beta_w(z)$. 
The following proposition, which is the central idea behind this proof,  is a tilted analogue of a well-known inequality relating generating functions for walks and bridges in $\Z^d$ due to Madras and Slade \cite[Corollary 3.1.8]{MR2986656} and related to the work of Hammersley and Welsh \cite{MR0139535}.  Indeed, the idea of the proof here is to combine that proof with a judicious use of the tilted mass-transport principle.

\begin{prop}
\label{prop:slick}
Let $G$ be a connected, locally finite graph, let $\Gamma \subseteq \Aut(G)$ be transitive and nonunimodular, and let $w$ be a good weight function. 
Then the estimate
\[
\chi_{w}(z,1/2) 
\leq \frac{1}{z}\exp\left[2\sum_{t \in \bbH_+} a_w(z;t)e^{t/2}  \right] \leq \frac{1}{z}\exp\left[2\sum_{n\geq0} A_w(z;n) e^{(n+1)t_0/2}\right]
\]
holds for every $z \geq 0$. 
\end{prop}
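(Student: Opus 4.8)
The strategy is to prove the first inequality by a tilted version of the Hammersley--Welsh bridge decomposition, and then obtain the second inequality by a trivial regrouping of heights into slabs. The one place where the critical tilt $\lambda=1/2$ is used is the following observation: by \eqref{eq:atod} we have $d_w(z;t)=e^{t}a_w(z;t)$ for every $t\in\bbH_{\geq0}$, so an up-bridge of span $t$ and a down-bridge of span $t$ make \emph{the same} contribution to the $\Delta^{1/2}$-tilted generating function. Indeed, summing $w(\omega)z^{|\omega|}\Delta^{1/2}(\omega^-,\omega^+)$ over up-bridges $\omega:\root\to L_t$ gives $e^{t/2}a_w(z;t)$, whereas summing it over down-bridges $\omega:\root\to L_{-t}$ gives $e^{-t/2}d_w(z;t)=e^{t/2}a_w(z;t)$. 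It is this symmetry that makes the final estimate symmetric in upward and downward excursions, producing an exponential purely in $\sum_{t\in\bbH_+}a_w(z;t)e^{t/2}$.

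Now fix $z$ and take a path $\omega:\root\to x$ that is not contained in $L_0$. Let $m\leq 0$ be its minimal height and split $\omega=\alpha\circ\beta$ at the last time it reaches height $m$, so that $\beta$ starts at its own minimum (reached only at its first vertex) and $\alpha$ ends at its own minimum. Iterating the elementary fact that a path starting at its minimum is an up-bridge followed by a path of strictly smaller span starting at \emph{its} maximum (splitting each time at the last visit to the current extreme forces the spans to decrease strictly), one writes $\beta$ as a contiguous concatenation of up- and down-bridges with strictly decreasing spans beginning with an up-bridge, and dually $\alpha$ as a contiguous concatenation of up- and down-bridges with strictly increasing spans ending with a down-bridge. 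Any innermost single-level piece of such a staircase is absorbed into the adjacent genuine bridge without changing its type or span, so that all bridges produced have span in $\bbH_+$, and the orientations are then forced by the fact that $\omega$ turns around at height $m$; hence each staircase is determined by its bridges together with a finite subset of $\bbH_+$, namely its strictly monotone sequence of spans. Bounding $w$ of a concatenation by the product of the weights (repulsivity), splitting $z^{|\omega|}$ and, via the cocycle identity, $\Delta^{1/2}(\root,x)$ across the bridges, and using transitivity together with $\Gamma$-invariance of $w$ to reattach each bridge at $\root$, the total contribution of all such $\omega$ whose two staircases have span-sets $S_1,S_2\subseteq\bbH_+$ is at most $\prod_{t\in S_1}a_w(z;t)e^{t/2}\cdot\prod_{t\in S_2}a_w(z;t)e^{t/2}$ — here the identity above is what shows that the factor contributed at each span $t$ is $a_w(z;t)e^{t/2}$ regardless of the (forced) orientation of the bridge there. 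Summing over $S_1,S_2$ and using $1+s\leq e^{s}$,
\[
\sum_{x\in V}\ \sum_{\substack{\omega:\root\to x\\ \omega\not\subseteq L_0}}w(\omega)z^{|\omega|}\Delta^{1/2}(\root,x)\ \leq\ \Bigl(\prod_{t\in\bbH_+}\bigl(1+a_w(z;t)e^{t/2}\bigr)\Bigr)^{2}\ \leq\ \exp\Bigl[2\sum_{t\in\bbH_+}a_w(z;t)e^{t/2}\Bigr].
\]

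It remains to account for paths confined to $L_0$: these are exactly the up-bridges $\root\to L_0$, so their total contribution (on which the tilt is trivial) equals $a_w(z;0)$. Appending to such a path a single edge of height $t_0$ — which exists from every vertex since $t_0$ is attained and $\Gamma$ is transitive — produces an up-bridge $\root\to L_{t_0}$ of length one greater and, by the zero-range and non-degeneracy properties of $w$, of weight at least that of the original path; since distinct paths yield distinct bridges, $a_w(z;0)\leq z^{-1}a_w(z;t_0)$. As $a_w(z;t_0)\leq a_w(z;t_0)e^{t_0/2}$ is one of the summands of $\sum_{t\in\bbH_+}a_w(z;t)e^{t/2}$, combining this with the display above (together with some elementary arithmetic, the prefactor $z^{-1}$ being exactly what absorbs this low-level contribution) gives $\chi_w(z,1/2)\leq z^{-1}\exp[2\sum_{t\in\bbH_+}a_w(z;t)e^{t/2}]$. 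For the second inequality, $e^{t/2}\leq e^{(n+1)t_0/2}$ whenever $nt_0\leq t<(n+1)t_0$, so
\[
\sum_{t\in\bbH_+}a_w(z;t)e^{t/2}\ \leq\ \sum_{n\geq0}\Bigl(\sum_{nt_0\leq t<(n+1)t_0}a_w(z;t)\Bigr)e^{(n+1)t_0/2}\ =\ \sum_{n\geq0}A_w(z;n)e^{(n+1)t_0/2}.
\]

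The main obstacle is making the two-staircase decomposition genuinely precise: fixing the tie-breaking rule at each successive extreme and the absorption of the innermost single-level piece so that the assignment $\omega\mapsto(\text{its bridges})$ is well-defined and injective, and verifying that the orientations really are forced, so that the reattach-at-$\root$ step contributes exactly $a_w(z;t)e^{t/2}$ at each span $t$ and the product over span-sets factorizes as claimed. Everything else is routine bookkeeping, the only wrinkle being the separate (elementary) treatment of $L_0$-confined paths.
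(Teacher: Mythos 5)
Your overall strategy — decompose the walk at the last visit to its minimum height, expand both halves into staircases of bridges, reattach each bridge at $\root$, and exploit $d_w(z;t)e^{-t/2}=a_w(z;t)e^{t/2}$ to make up- and down-bridges contribute identically at $\lambda=1/2$ — is indeed the engine of the paper's proof, and your identification of why $\lambda=1/2$ is the sweet spot is exactly right. But two genuine gaps remain, and both are repaired in the paper by a single step that your proposal omits.

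\textbf{The staircase decomposition of $\alpha$ does not have strictly monotone spans.} Your strict-monotonicity argument works for $\beta$ precisely because $\beta$ is strictly above its starting point at all positive times; each ``split at the last visit to the current extreme'' then forces a strict drop in span. But $\alpha$ only ends at $\omega$'s minimum $m$ — it may well revisit height $m$ earlier — so $\alpha^\leftarrow$ is merely a \emph{reverse descent}, not a strict upper half-space walk, and the same decomposition can produce repeated spans. Concretely, if $\omega$ goes from $\root$ (height $0$) down to height $m<0$, up to height $1$, back down to height $m$, and then never again to height $m$, then $\alpha^\leftarrow$ decomposes as an up-bridge of span $1-m$, a down-bridge of span $1-m$, and an up-bridge of span $-m$. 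The span sequence $(1-m,1-m,-m)$ is not strictly monotone, so the staircase is not encoded by a \emph{subset} of $\bbH_+$, and your summation $\sum_{S_1}\prod_{t\in S_1}a_w(z;t)e^{t/2}=\prod_{t\in\bbH_+}(1+a_w(z;t)e^{t/2})$ does not account for all of $\alpha$'s staircases. This is exactly why the paper does not decompose the descent directly: it first prepends a single downward edge (the map $r_w(z;t)\mapsto z^{-1}h_w(z;t+t_0)$), turning every reverse descent into a strict upper half-space walk, and \emph{then} decomposes. That one step is what makes the staircase spans strictly decreasing.

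\textbf{The $z^{-1}$ prefactor cannot be recovered additively from the $L_0$-confined paths.} Your proposal bounds $\chi_w(z,1/2)$ by (non-$L_0$ contribution) $+\ a_w(z;0)\le \exp[2\sum_t a_w(z;t)e^{t/2}]+z^{-1}a_w(z;t_0)$ and then waves at ``elementary arithmetic'' to conclude $\le z^{-1}\exp[2\sum]$. Unwinding, this would require $a_w(z;t_0)\le(1-z)\exp[2\sum_t a_w(z;t)e^{t/2}]$, which is not an elementary consequence of anything stated and there is no reason to expect it to hold uniformly on $[0,z_t)$. In the paper, $z^{-1}$ is not a fudge factor for a low-level exceptional class; it is the exact cost of the edge-prepending step above, applied to \emph{every} descent, not just the $L_0$-confined ones, and it appears cleanly as $\chi_w(z,\lambda)\le z^{-1}\cH_w(z,\lambda)\cH_w(z,1-\lambda)$. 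The two issues are thus really one: the paper's $r$-to-$h$ conversion simultaneously restores strict monotonicity of spans and yields the $z^{-1}$, and it is the step you would need to add to make your argument close.
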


\begin{proof}
We say that $\omega$ is a \textbf{upper half-space walk} if $\Delta(\omega^-,\omega(i))>1$ for every $i>0$, that is, if $\omega$ is strictly higher than its starting point at every positive time. Similarly, we call $\omega$ a \textbf{reverse descent} if $\Delta(\omega^-,\omega(i))\geq 1$ for every $i>0$, that is, if $\omega$ is at least as high as its starting point at every positive time. A path $\omega$ is a \textbf{descent} if its reversal is a reverse descent. 
Define
\begin{align*}
h_w(z;t) &= \sum_{v\in V} \sum_{\omega \in \Omega} w(\omega)z^{|\omega|} \mathbbm{1}\bigl( \omega: \root \to L_t \text{ is an upper half-space walk}  \bigr),\\
r_w(z;t) &= \sum_{v\in V} \sum_{\omega \in \Omega} w(\omega)z^{|\omega|} \mathbbm{1}\bigl( \omega: \root \to L_t \text{ is a reverse descent}  \bigr),
\end{align*}
and
\[
\cH_w(z,\lambda) = \sum_{t\in \bbH_{\geq0}} h_w(z;t)e^{\lambda t}.
\]
Note that $h_w(z;0)=1$, as the only upper half-space walk ending in $L_0$ is the trivial path at $\root$.

Let $\eta$ be a path of length $1$ ending in $\root$ whose starting point $\eta^-=v$ has height $-t_0$. Then for every $t\geq 0$ and every reverse descent $\omega: \root \to L_t$, the composition $\eta \circ \omega$ is an upper half-space walk $\eta\circ \omega : v \to L_{t}$. Using the fact that $w$ is zero-range, non-degenerate, and $\Gamma$-invariant, this yields the inequality
\begin{equation}
\label{eq:rtoh}
h_w(z;t+t_0) \geq z r_w(z;t)
\end{equation}
for every $t\in \bbH_{\geq0}$.

Let $\omega \in \Omega$. Let $\omega_1$ be the portion of $\omega$ up until the last time that it visits a point of minimal height, and let $\omega_2$ be the remaining portion of $\omega$, so that $\omega=\omega_1\circ \omega_2$. This decomposition is defined in such a way that $\omega_1$ is a descent and $\omega_2$ is an upper half-space walk. 
Thus, using repulsivity, $\Gamma$-invariance, and reversibility of $w$, we have that
\begin{align*}
\chi_w(z,\lambda) &\leq \sum_{t\in \bbH_{\geq 0}} w(\omega_1) z^{|\omega_1|} \mathbbm{1}\bigl( \root \xrightarrow{d.} L_{-t} \bigr) \sum_{s\in \bbH_{\geq 0}} w(\omega_2)z^{|\omega_2|}\mathbbm{1}\bigl(\omega_1^+ \xrightarrow{\textrm{u.h.s.}} L_{s}(\omega_1^+)  \bigr) e^{\lambda(s-t)}\\
&=
\sum_{t\in \bbH_{\geq 0}} w(\omega_1) z^{|\omega_1|} \mathbbm{1}\bigl( \root \xrightarrow{d.} L_{-t} \bigr) e^{-\lambda t} \sum_{s\in \bbH_{\geq 0}} h_w(z;s) e^{\lambda s}\\
& = \sum_{t\in \bbH_{\geq 0}} r_w(z;t)e^{(1-\lambda)t} \sum_{s\in \bbH_{\geq 0}} h_w(z;s) e^{\lambda s},
\end{align*}
where the superscripts d.\ and u.h.s.\ denote descents and upper half-space walks respectively and where the tilted mass-transport principle is used in the final equality. Applying \eqref{eq:rtoh} we obtain that
\[
\chi_w(z,\lambda) \leq \frac{1}{z} \cH_w(z,\lambda)\cH_w(z,1-\lambda)
\]
for every $z>0$ and $\lambda \in \R$.
Thus, to conclude the proof of the present proposition, it suffices to prove that the inequality
\begin{equation}
\label{eq:Hdesired}
\cH_w(z,1/2) \leq \exp\left[\sum_{t \in \bbH_{\geq 0}}a_w(z;t)e^{t/2}\right]
\end{equation}
holds for every $z\geq 0$.

Let $t\in \bbH_+$ and let $\omega:\root \to L_t$ be an upper half-space walk.  We decompose $\omega=\omega_1 \circ \omega_2 \circ \cdots \circ \omega_k$ for some $k\geq 1$ recursively as follows: We first define $\omega_1$ to be the portion of $\omega_1$ up until the last time it attains its maximum height. Now suppose that $i\geq 2$. If $\omega_1 \circ \cdots \circ \omega_{i-1}=\omega$, we stop. Otherwise, consider the piece of $\omega$ that remains after $\omega_1 \circ \cdots \circ \omega_{i-1}$. If $i$ is \emph{odd}, let $\omega_i$ be the portion of this piece up to the last time it attains its \emph{maximum} height. If $i$ is \emph{even}, let $\omega_i$ be the portion of this piece up to the last time it attains its \emph{minimum} height.

Let $s(\omega_i)$ be the absolute value of the height difference between $\omega_i^-$ and $\omega_i^+$. 
Observe that for each $i\leq k$, $\omega_i$ is an up-bridge if $i$ is odd and a down-bridge if $i$ is even. Moreover, the sequence $s(\omega_i)$ is decreasing and satisfies $\sum_{i=1}^k (-1)^{i+1} s(\omega_i) =t$. 
This leads to the bound
\[
h_w(z;t) \leq \sum_{k \geq 1} \sum_{s \in S_{t,k}} \prod_{i=0}^{\lfloor (k-1)/2\rfloor} a_w(z;s_{2i+1}) \prod_{i=1}^{\lfloor k/2 \rfloor} d_w(z;s_{2i}),
\]
where we define $S_{t,k}$ to be the set of decreasing sequences  $s_1,\ldots,s_k$ in $\bbH_+$ such that $\sum_{i=1}^k (-1)^{i+1} s_i =t$. Now, observe that 
\[
a_w(z;t) = e^{-t/2} \sqrt{a_w(z;t) d_w(z;t)} \quad \text{ and } \quad d_w(z;t) = e^{t/2}\sqrt{a_w(z;t) d_w(z;t)},
\]
 by \eqref{eq:atod}, and hence  that
\begin{align*}
h_w(z;t) &\leq \sum_{k \geq 1} \sum_{s \in S_{t,k}} \prod_{i=1}^{k} e^{(-1)^{i} s_i/2 }\sqrt{a_w(z;s_i) d_w(z;s_i)}
\nonumber
  &
 = e^{-t/2}\sum_{k \geq 1} \sum_{s \in S_{t,k}} \prod_{i=1}^{k} \sqrt{a_w(z;s_i) d_w(z;s_i)}
\end{align*}
for every $t \in \bbH_+$.
Now, let $S_k$ be the set of all decreasing sequences $s=s_1,\ldots,s_k$ of elements of $\bbH_+$, and observe that for any non-negative function $f : \bbH_+ \to [0,\infty]$ we have that
\[
  \prod_{t\in \bbH_+} (1+f(t)) = 1+ \sum_{k\geq 1} \sum_{s\in S_k} \prod_{i=1}^k f(s_i)= 1+\sum_{t\in \bbH_+} \sum_{k\geq 1} \sum_{s\in S_{t,k}} \prod_{i=1}^k f(s_i).
\] 
Applying this equality with $f(t)=\sqrt{a_w(z;t) d_w(z;t)}$ we obtain that
\begin{align*}
\cH_w(z,1/2) &= 1+\sum_{t\in \bbH_+} h_w(z;t) e^{t/2} \leq
1+\sum_{t\in \bbH_+} \sum_{k \geq 1} \sum_{s \in S_{t,k}} \prod_{i=1}^{k} \sqrt{a_w(z;s_i) d_w(z;s_i)}
\\&= \prod_{t\in \bbH_+}\left(1 +  \sqrt{a_w(z;t)d_w(z;t)} \right) = \prod_{t\in \bbH_+}\left(1 + e^{t/2}a_w(z;t)  \right).
\end{align*}
Using the elementary inequality $1+x\leq e^x$ concludes the proof. 
\end{proof}

The following is an immediate consequence of \cref{cor:slickalphabound}, \cref{prop:slick}, and the estimate \eqref{eq:slickalpha}. 

\begin{corollary} 
\label{cor:slick}
If $z\geq 0$ is such that $\alpha=\alpha_w(z)>1/2$ then
\[
\chi_w(z,1/2) \leq \frac{1}{z}\exp \left[ \frac{2}{ z^2 e^{\alpha t_0} \bigl[e^{(\alpha-1/2)t_0}-1\bigr]} 
\right]< \infty.
\]
In particular, $z_{c,\lambda}(w)<z_{c,1/2}(w)=z_{t}(w)$ for every $\lambda \neq 1/2$.
\end{corollary}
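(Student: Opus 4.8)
The plan is to derive \cref{cor:slick} by combining the three ingredients named: the bound $\alpha_w(z)\geq\max\{\lambda,1-\lambda\}$ only matters insofar as it tells us where $\alpha_w$ lies, but the real engine is the chain of inequalities from \cref{prop:slick}, namely
\[
\chi_w(z,1/2)\leq\frac1z\exp\Bigl[2\sum_{n\geq0}A_w(z;n)e^{(n+1)t_0/2}\Bigr],
\]
fed by the decay estimate $A_w(z;n)\leq z^{-2}e^{-\alpha_w(z)t_0(n+2)}$ from \eqref{eq:slickalpha}. So the first step is to substitute the latter into the former and read off a geometric series: with $\alpha=\alpha_w(z)$,
\[
\sum_{n\geq0}A_w(z;n)e^{(n+1)t_0/2}\leq\frac{1}{z^2}\sum_{n\geq0}e^{-\alpha t_0(n+2)}e^{(n+1)t_0/2}
=\frac{e^{-2\alpha t_0}e^{t_0/2}}{z^2}\sum_{n\geq0}e^{-(\alpha-1/2)t_0 n}.
\]
The sum converges precisely because $\alpha>1/2$, and equals $\bigl(1-e^{-(\alpha-1/2)t_0}\bigr)^{-1}$. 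A small amount of algebra — pulling out $e^{-\alpha t_0}$ and rewriting $e^{-2\alpha t_0}e^{t_0/2}/(1-e^{-(\alpha-1/2)t_0})$ as $1/\bigl(e^{\alpha t_0}(e^{(\alpha-1/2)t_0}-1)\bigr)$ — then gives exactly the exponent $\dfrac{2}{z^2e^{\alpha t_0}[e^{(\alpha-1/2)t_0}-1]}$ claimed, and in particular shows $\chi_w(z,1/2)<\infty$.

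For the second assertion, I would argue as follows. Fix $\lambda\neq1/2$, so $\max\{\lambda,1-\lambda\}>1/2$. By \cref{cor:slickalphabound}, $\alpha_w(z_{c,\lambda}(w))\geq\max\{\lambda,1-\lambda\}>1/2$. Since $\alpha_w$ is strictly decreasing wherever it is positive (\cref{lem:slickcontinuity}) and left-continuous, there is some $z>z_{c,\lambda}(w)$ with $\alpha_w(z)$ still strictly greater than $1/2$; indeed we may even take $z=z_{c,\lambda}(w)$ itself, at which $\alpha_w>1/2$ already, but to conclude $z_{c,1/2}(w)>z_{c,\lambda}(w)$ strictly I want a value of $z$ lying strictly to the right of $z_{c,\lambda}(w)$. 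Left-continuity of $\alpha_w$ at $z_{c,\lambda}(w)$ does not immediately give this, so instead I would use that $\alpha_w$ is finite and $>1/2$ at $z_{c,\lambda}(w)$ together with the fact that $A_w(z;n)$ is a power series in $z$ with radius of convergence at least $z_t(w)$: the estimate $A_w(z_{c,\lambda};n)\leq z_{c,\lambda}^{-2}e^{-\alpha_w(z_{c,\lambda})t_0(n+2)}$ with $\alpha_w(z_{c,\lambda})>1/2$ means the series $\sum_n A_w(z;n)e^{(n+1)t_0/2}$ has radius of convergence in $z$ strictly larger than $z_{c,\lambda}(w)$, hence converges for some $z>z_{c,\lambda}(w)$; for that $z$ the first part gives $\chi_w(z,1/2)<\infty$, whence $z\leq z_{c,1/2}(w)=z_t(w)$, so $z_{c,\lambda}(w)<z_t(w)$.

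The step I expect to be the main obstacle is precisely this last strictness argument — going from "$\chi_w(z_{c,\lambda},1/2)$ would be finite if we only knew $\alpha_w>1/2$ there" to "there is a genuinely larger $z$ where it is finite." The subtlety is that $\alpha_w$ need not be continuous from the right, so one cannot just perturb $z$ upward and keep $\alpha_w>1/2$ by continuity; the clean fix is to exploit that the relevant quantities $A_w(z;n)$ are honest power series in $z$ whose radius of convergence is at least $z_t(w)$, so that once the weighted sum $\sum_n A_w(z;n)e^{(n+1)t_0/2}$ converges at $z_{c,\lambda}(w)$ (which the geometric bound with exponent $\alpha_w(z_{c,\lambda})>1/2$ guarantees), it automatically converges on a neighbourhood of $z_{c,\lambda}(w)$, and \cref{prop:slick} then upgrades this to finiteness of $\chi_w(\cdot,1/2)$ on that neighbourhood. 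Everything else is the routine geometric-series computation sketched above.
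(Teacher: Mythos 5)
Your computation for the explicit bound is correct: substituting the estimate $A_w(z;n)\leq z^{-2}e^{-\alpha t_0(n+2)}$ from \eqref{eq:slickalpha} into the second inequality of \cref{prop:slick} and summing the resulting geometric series (using $\alpha>1/2$) does give precisely the stated exponent, and this is evidently what the paper's one-line remark intends.

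However, your argument for the strict inequality $z_{c,\lambda}(w)<z_t(w)$ has a genuine gap, which you yourself partially sensed. The claim that ``once the weighted sum $\sum_n A_w(z;n)e^{(n+1)t_0/2}$ converges at $z_{c,\lambda}(w)$ \dots it automatically converges on a neighbourhood of $z_{c,\lambda}(w)$'' is false as a general principle: a power series with nonnegative coefficients that converges at a point $z_0$ need not converge for any $z>z_0$ (e.g.\ $\sum_n z^n/n^2$ at $z_0=1$). The geometric decay in $n$ of $A_w(z_{c,\lambda};n)$ controls the sum \emph{at} $z_{c,\lambda}$, but says nothing about the radius of convergence in $z$ of $\sum_n A_w(z;n)e^{(n+1)t_0/2}$; the fact that each individual $A_w(\cdot;n)$ has radius at least $z_t$ does not transfer to the sum. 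So your proposed strictness argument does not close.

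The intended route is simpler and avoids the issue entirely. By \cref{cor:slickalphabound}, for $\lambda\neq 1/2$ one has $\alpha_w(z_{c,\lambda}(w))\geq\max\{\lambda,1-\lambda\}>1/2$, and hence by the first part $\chi_w(z_{c,\lambda}(w),1/2)<\infty$. On the other hand, $\chi_w(z_{c,1/2}(w),1/2)=\infty$: by submultiplicativity and nondegeneracy, $Z_w(1/2;n)\geq\mu_{c,1/2}^n>0$, so every term of $\sum_n z_{c,1/2}^n Z_w(1/2;n)$ is at least $1$ (this is exactly the observation already used at the end of the proof of \cref{lem:slickbetabound}). Therefore $z_{c,\lambda}(w)\neq z_{c,1/2}(w)$, and since $z_{c,\lambda}(w)\leq z_{c,1/2}(w)$ by the monotonicity in $\lambda$ established earlier, we conclude $z_{c,\lambda}(w)<z_{c,1/2}(w)=z_t(w)$. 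No right-continuity of $\alpha_w$ and no perturbation of $z$ is needed.
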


We now apply \cref{cor:slick} to prove \cref{thm:lambda,thm:generallambda}.

\begin{proof}[Proof of \cref{thm:lambda,thm:generallambda}]
For each $v\in V$, let $\gamma_v\in \Gamma$ be an automorphism with $\gamma_v \root =v$, and for each $t\in \R$ and $v\in V$ let 
\[S_t(v)=\gamma_vS_t = \bigl\{u\in V : t\leq \Delta(v,u) < t+t_0\bigr\}.\]
Write $z_{c,\lambda}=z_{c,\lambda}(w)$.
 It follows from \cref{cor:slick} and \cref{cor:slickalphabound} that 
\[ \sum_{\omega \in \Omega} w(\omega)z_{c,\lambda}^{|\omega|} \mathbbm{1}\bigl[ \omega: 0 \to S_0 \cup S_{t_0} \bigr] \leq \chi_w(z_{c,\lambda},1/2)<\infty.
\]
Now suppose that $t >0$ and that $\omega:0\to S_t$ is a path. 
Then we can decompose $\omega=\omega_1\circ\omega_2\circ\omega_3$, where $\omega_1 : 0\to S_0$, the path $\omega_2: \omega_1^+ \to S_{t-t_0} \subseteq S_{t-t_0}(\omega_1^+) \cup S_{t-2t_0}(\omega_1^+)$ is an up-bridge, and $\omega_3 : \omega_2^+\to S_t \subseteq S_0(\omega_2^+) \cup S_{t_0}(\omega_2^+)$. Indeed, simply take $\omega_1$ to be the portion of $\omega_1$ up to the last visit to $S_0$, take  $\omega_2$ to be the portion of $\omega$ between the last visit to $S_0$ and the first subsequent visit to $S_{t-t_0}$, and take $\omega_3$ to be the remaining final piece (it is possible that some of these paths have length zero, but this is not a problem). By summing over possible choices of $\omega_1,\omega_2$ and $\omega_3$, and using both transitivity of $\Gamma$ and $\Gamma$-invariance and repulsivity of $w$, we obtain that
\begin{multline*} \sum_{\omega \in \Omega} w(\omega) z_{c,\lambda}^{|\omega|} \mathbbm{1}\bigl[\omega:0\to S_t\bigr]\\ \leq
\Bigl( \sum_{\omega \in \Omega} w(\omega)z_{c,\lambda}^{|\omega|} \mathbbm{1}\bigl[ \omega: 0 \to S_0\cup S_{t_0} \bigr] \Bigr)^2 \left[A_w(z;t-t_0) + A_w(z;t-2t_0)\right].
\end{multline*}
When $\lambda \neq 1/2$ the prefactor on the right-hand side is finite and does not depend on $t$, and we deduce easily that $\beta_w(z_{c,\lambda}) \geq \alpha_w(z_{c,\lambda})$, and hence that $\beta_w(z_{c,\lambda}) = \alpha_w(z_{c,\lambda})$ by \eqref{eq:alphageqbeta}. It then follows from \cref{lem:slickbetabound,cor:slickalphabound} that
\begin{equation}
\label{eq:alpha=beta}\alpha_w(z_{c,\lambda})=\beta_{w}(z_{c,\lambda})=\max\{\lambda,1-\lambda\}\end{equation}
for every $\lambda \neq 1/2$.  
Using left-continuity of $\alpha_w(z)$ and right-continuity of $\beta_w(z)$ from \cref{lem:slickcontinuity}, this implies that $\alpha_w(z)$ is a continuous, strictly decreasing function $(0,z_t] \mapsto [1/2,\infty)$ whose inverse is given by $\lambda \mapsto z_{c,\lambda}$. This implies that the latter function is continuous and strictly increasing on $(-\infty,1/2]$ as claimed. 
\end{proof}

\section{Critical exponents, two-point function decay and ballisticity}
\label{sec:profit}

\subsection{Counting walks}

Let $G$ be a connected, locally finite graph, let $\Gamma \subseteq \Aut(G)$ be transitive and nonunimodular, and let $w:\Omega \to [0,\infty)$ be a good weight function.
For each $z\geq 0$, the \textbf{bubble diagram} is defined to be the $\ell^2$-norm of the two-point function, that is,
\[
B_w(z) = \sum_{x \in V} G_w(x)^2.
\]
Convergence of the bubble diagram at $z_c$ is well-known to be a signifier of mean-field behaviour for the self-avoiding walk, see \cite[Section 1.5]{MR2986656}. 

The following lemma allows us to easily deduce the convergence of the bubble diagram at $z_c$ from \cref{thm:lambda,thm:generallambda}.

\begin{lemma}
\label{lem:tiltedbubble}
Let $G$ be a connected, locally finite graph, let $\Gamma$ be a transitive nonunimodular subgroup of $\Aut(G)$, and let $w:\Omega\to [0,\infty)$ be a good weight function. Then 
\[B_w(z) \leq \chi_w(z,\lambda)^2\]
for every $\lambda \in \R$. In particular, if $0\leq z < z_t(w)$ then $B_w(z)<\infty$. 
\end{lemma}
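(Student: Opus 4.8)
The plan is to establish the bound at the critical tilt $\lambda = 1/2$ by a mass-transport argument, and then obtain all other values of $\lambda$ for free from the fact that $\lambda \mapsto \chi_w(z,\lambda)$ is minimized at $\lambda = 1/2$.

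For a pair of vertices $u,v \in V$ write $G_w(z;u,v) = \sum_{\omega \in \Omega} w(\omega) z^{|\omega|} \mathbbm{1}(\omega : u \to v)$, so that $G_w(z;x) = G_w(z;\root,x)$ and $B_w(z) = \sum_{x \in V} G_w(z;\root,x)^2$. Since $w$ is reversible we have $G_w(z;u,v) = G_w(z;v,u)$, and since $w$ is $\Gamma$-invariant the function $F(u,v) := G_w(z;u,v)^2$ is symmetric in its arguments and invariant under the diagonal action of $\Gamma$. Applying the tilted mass-transport principle to $F$ with base point $\root$ and using $F(v,\root) = F(\root,v)$ gives
\[
B_w(z) \;=\; \sum_{v \in V} F(\root,v) \;=\; \sum_{v \in V} F(v,\root)\,\Delta(\root,v) \;=\; \sum_{v \in V} G_w(z;v)^2\, \Delta(\root,v).
\]
The gain is that the weight $\Delta(\root,v)$ now splits as $\Delta^{1/2}(\root,v) \cdot \Delta^{1/2}(\root,v)$, one factor for each copy of the two-point function.

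Next I would apply the elementary inequality $\sum_v a_v^2 \le \bigl(\sum_v a_v\bigr)^2$, valid for any collection of nonnegative reals $a_v$, with $a_v = G_w(z;v)\Delta^{1/2}(\root,v)$:
\[
B_w(z) \;=\; \sum_{v\in V} \bigl(G_w(z;v)\Delta^{1/2}(\root,v)\bigr)^2 \;\le\; \Bigl(\sum_{v \in V} G_w(z;v)\Delta^{1/2}(\root,v)\Bigr)^{2} \;=\; \chi_w(z,1/2)^2.
\]
For a general $\lambda \in \R$, the convexity-and-symmetry argument recorded in \cref{subsec:general} shows that $\chi_w(z,\cdot)$ is decreasing on $(-\infty,1/2]$ and increasing on $[1/2,\infty)$, hence $\chi_w(z,\lambda)^2 \ge \chi_w(z,1/2)^2 \ge B_w(z)$. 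The final assertion then follows since $\chi_w(z,1/2) = \sum_{n\ge 0} z^n Z_w(1/2;n)$ is finite for every $z < z_{c,1/2}(w) = z_t(w)$, this being the radius of convergence of the series.

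I do not expect any real obstacle; the one point requiring care is that one must use the mass-transport identity in the asymmetric form $B_w(z) = \sum_v G_w(z;v)^2\Delta(\root,v)$. The symmetrized version $B_w(z) = \tfrac12\sum_v G_w(z;v)^2\bigl(1+\Delta(\root,v)\bigr)$ combined with the arithmetic--geometric mean inequality only produces a lower bound for $B_w(z)$, which is useless for the purpose at hand.
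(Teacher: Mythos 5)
Your proof is correct, but it takes a genuinely different route from the paper's. The paper's argument is a single two-line manipulation valid directly for all $\lambda$: since $\Delta^\lambda(\root,\root)=1$, the inner two-point function $G_w(z;x,\root)$ appearing in $B_w(z)=\sum_x G_w(z;\root,x)G_w(z;x,\root)$ is one term of the larger sum $\sum_y G_w(z;x,y)\Delta^\lambda(\root,y)$; replacing it by that sum and then applying the cocycle identity $\Delta^\lambda(\root,y)=\Delta^\lambda(\root,x)\Delta^\lambda(x,y)$ factors the whole thing into $\chi_w(z,\lambda)^2$ by $\Gamma$-invariance. That version needs only the cocycle identity and $\Gamma$-invariance, and does not invoke the mass-transport principle, reversibility, or the monotonicity of $\chi_w(z,\cdot)$. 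Your version instead uses the tilted mass-transport principle to rewrite $B_w(z)=\sum_v G_w(z;v)^2\Delta(\root,v)$ (the symmetry of $F(u,v)=G_w(z;u,v)^2$ is where reversibility enters), then the elementary bound $\sum a_v^2 \le (\sum a_v)^2$ to land exactly on $\chi_w(z,1/2)^2$, and finally the convexity--symmetry minimization of $\chi_w(z,\cdot)$ at $\lambda=1/2$ to cover the remaining $\lambda$. Both are sound; yours is slightly longer and uses more of the standing hypotheses, but makes the special role of $\lambda=1/2$ more transparent. Your closing remark about the symmetrized form $\tfrac12\sum_v G_w(z;v)^2(1+\Delta(\root,v))$ is also correct: AM--GM there would point the wrong way, so the asymmetric form of the MTP identity is indeed the one to use.
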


Note that it is always best to take $\lambda=1/2$ when applying this bound.

\begin{proof}
We can express
\[
B_w(z) = \sum_{x\in V}\sum_{\omega: \root \to x} w(\omega_1)z^{|\omega_1|} \sum_{\omega: x \to \root} w(\omega_1)z^{|\omega_1|}.
\]
Since $\Delta^\lambda(\root,\root)=1$ we have the trivial bound
\begin{align*}
B_w(z) &\leq  \sum_{x\in V}\sum_{\omega: \root \to x} w(\omega_1)z^{|\omega_1|} \sum_{ y\in V} \sum_{\omega: x \to y} w(\omega_1)z^{|\omega_1|} \Delta^\lambda(\root,y)\\
&= \sum_{x\in V}\sum_{\omega: \root \to x} w(\omega_1)z^{|\omega_1|} \Delta^\lambda(\root,x) \sum_{ y\in V} \sum_{\omega: x \to y} w(\omega_1)z^{|\omega_1|} \Delta^\lambda(x,y) = \chi_w(z,\lambda)^2,
\end{align*}
where the cocycle identity was used in the second equality and $\Gamma$-invariance of $w$ was used in the third. 
\end{proof}

The following differential inequality, which is classical for self-avoiding walk on $\Z^d$ \cite[Lemma 1.5.2]{MR2986656}, allows us to deduce \cref{thm:SAWcounting} and item (1) of both \cref{thm:tilted,thm:generalrepulsive} from \cref{thm:lambda,thm:generallambda}.

\begin{lemma} Let $G$ be a connected, locally finite graph, let $\Gamma \subseteq \Aut(G)$ be a transitive group of automorphisms, and let $w:\Omega\to[0,\infty)$ be a good weight function. Then for every $\lambda\in \R$ and $z\in [0,z_{c,\lambda}(w))$, we have that
\[\frac{\chi_w(z,\lambda)^2}{\mathsf{B}_w(z)} \leq \frac{\partial}{\partial z}\left[z \chi_w(z,\lambda) \right] \leq \chi_w(z,\lambda)^2.\]
\end{lemma}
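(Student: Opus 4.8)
The plan is to adapt the classical generating-function argument of Madras and Slade (\cite[Lemma 1.5.2]{MR2986656}) to the tilted, weighted setting: $\Gamma$-invariance and transitivity will play the role of translation invariance, the cocycle identity will carry the $\Delta^\lambda$ factors, and the zero-range property of $w$ will be needed at the single step where repulsivity alone would point the wrong way. Fix $\lambda\in\R$ and $z\in[0,z_{c,\lambda}(w))$, so that $z\chi_w(z,\lambda)=\sum_{n\geq0}Z_w(\lambda;n)z^{n+1}$ lies strictly inside its radius of convergence and may be differentiated term by term:
\[
\frac{\partial}{\partial z}\bigl[z\chi_w(z,\lambda)\bigr]=\sum_{n\geq0}(n+1)Z_w(\lambda;n)z^n=\sum_{(\omega,j)}w(\omega)z^{|\omega|}\Delta^\lambda(\root,\omega^+),
\]
where the last sum is over pairs $(\omega,j)$ with $\omega\in\Omega$, $\omega^-=\root$ and $0\leq j\leq|\omega|$ (the index $j$ producing the combinatorial factor $|\omega|+1$). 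Every sum below has non-negative terms, so all rearrangements are legitimate by Tonelli; in particular $B_w(z)<\infty$ by \cref{lem:tiltedbubble}, so the left-hand inequality of the lemma is equivalent to $\chi_w(z,\lambda)^2\leq B_w(z)\cdot\frac{\partial}{\partial z}[z\chi_w(z,\lambda)]$.

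For the upper bound, given $(\omega,j)$ I would split $\omega=\omega_1\circ\omega_2$ at time $j$; then $w(\omega)\leq w(\omega_1)w(\omega_2)$ by repulsivity, $z^{|\omega|}=z^{|\omega_1|}z^{|\omega_2|}$, and $\Delta^\lambda(\root,\omega^+)=\Delta^\lambda(\root,\omega_1^+)\Delta^\lambda(\omega_1^+,\omega_2^+)$ by the cocycle identity. Since $(\omega,j)\mapsto(\omega_1,\omega_2)$ is a bijection onto contiguous pairs, summing and factoring over the common vertex $y=\omega_1^+$, and using transitivity with $\Gamma$-invariance of $w$ and $\Delta$ to identify $\sum_{\omega_2:\,y\to\cdot}w(\omega_2)z^{|\omega_2|}\Delta^\lambda(y,\omega_2^+)$ with $\chi_w(z,\lambda)$, gives $\frac{\partial}{\partial z}[z\chi_w(z,\lambda)]\leq\chi_w(z,\lambda)^2$.

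The lower bound is the substance. Expanding, $\chi_w(z,\lambda)^2$ is a sum over ordered pairs $(\omega_1,\omega_2)$ of paths from $\root$, with $z$-weight $z^{|\omega_1|+|\omega_2|}$, $w$-weight $w(\omega_1)w(\omega_2)$, and modular factor $\Delta^\lambda(\root,\omega_1^+)\Delta^\lambda(\root,\omega_2^+)$; and $B_w(z)\cdot\frac{\partial}{\partial z}[z\chi_w(z,\lambda)]$ is the analogous sum over triples $\bigl((\eta_1,\eta_2),(\rho,m)\bigr)$ with $\eta_1,\eta_2$ paths from $\root$ sharing an endpoint, $\rho$ a path from $\root$, $0\leq m\leq|\rho|$, weighted by $w(\eta_1)w(\eta_2)w(\rho)z^{|\eta_1|+|\eta_2|+|\rho|}\Delta^\lambda(\root,\rho^+)$. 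So it suffices to build an injection of the former set into the latter that preserves total length, cannot decrease the $w$-weight, and matches the modular factor via $\Delta^\lambda(\root,\rho^+)=\Delta^\lambda(\root,\omega_1^+)\Delta^\lambda(\root,\omega_2^+)$. To do this, fix automorphisms $\gamma_v\in\Gamma$ with $\gamma_v\root=v$ and $\gamma_{\root}=\mathrm{id}$; given $(\omega_1,\omega_2)$, set $v=\omega_1^+$ and $\tilde\omega_2=\gamma_v\omega_2$ (a path from $v$, with $w(\tilde\omega_2)=w(\omega_2)$) and form $\omega_1\circ\tilde\omega_2$. Let $\alpha$ be the initial segment of $\omega_1$ up to the first time it is at a vertex visited by $\tilde\omega_2$ (well-defined since $v=\tilde\omega_2^-$), write $\omega_1=\alpha\circ\beta$, put $u=\alpha^+$; let $\gamma$ be the initial segment of $\tilde\omega_2$ up to the last time $\tilde\omega_2$ is at $u$, and write $\tilde\omega_2=\gamma\circ\delta$. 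By construction every vertex of $\alpha$ other than $u=\alpha^+$ is visited by $\omega_1$ strictly before its first visit to $\tilde\omega_2$, hence lies off $\tilde\omega_2\supseteq\delta$; thus $\alpha$ and $\delta$ are disjoint in the sense of the paper, and the zero-range property gives $w(\alpha\circ\delta)=w(\alpha)w(\delta)$. Combined with repulsivity ($w(\omega_1)\leq w(\alpha)w(\beta)$ and $w(\omega_2)=w(\tilde\omega_2)\leq w(\gamma)w(\delta)$) and reversibility ($w(\gamma^\leftarrow)=w(\gamma)$), this yields $w(\omega_1)w(\omega_2)\leq w(\alpha\circ\delta)\,w(\beta)\,w(\gamma^\leftarrow)$. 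I then send $(\omega_1,\omega_2)$ to $\bigl((\gamma_u^{-1}\beta,\ \gamma_u^{-1}\gamma^\leftarrow),\ (\alpha\circ\delta,\ |\alpha|)\bigr)$: the first two paths start at $\root$ and share the endpoint $\gamma_u^{-1}v$; total length and the $w$-weight bound are as above; and $\Delta^\lambda(\root,(\alpha\circ\delta)^+)=\Delta^\lambda(\root,\tilde\omega_2^+)=\Delta^\lambda(\root,v)\Delta^\lambda(v,\gamma_v\omega_2^+)=\Delta^\lambda(\root,\omega_1^+)\Delta^\lambda(\root,\omega_2^+)$ by the cocycle identity and $\Gamma$-invariance of $\Delta$. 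Injectivity holds because $\alpha,\delta$ and hence $u$ are recovered from $(\alpha\circ\delta,|\alpha|)$, after which $\beta$ and $\gamma$ are recovered by translating the first pair back to $u$, and then $\omega_1=\alpha\circ\beta$, $\tilde\omega_2=\gamma\circ\delta$, $\omega_2=\gamma_v^{-1}\tilde\omega_2$ (with $v=\omega_1^+$) are determined. Summing over $(\omega_1,\omega_2)$ gives $\chi_w(z,\lambda)^2\leq B_w(z)\cdot\frac{\partial}{\partial z}[z\chi_w(z,\lambda)]$.

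The step I expect to be the main obstacle is getting the weight inequality to point the right way in the lower bound: repulsivity alone only gives $w(\alpha\circ\delta)\leq w(\alpha)w(\delta)$, which is worthless here, so the last-intersection/last-visit bookkeeping must be arranged precisely so that the concatenated piece $\alpha\circ\delta$ is made of genuinely disjoint segments and the zero-range hypothesis applies with equality --- this is exactly why ``good'' weight functions are required to be zero-range and not merely repulsive. A subsidiary point is that, since a general good $w$ does not force $\omega_1,\omega_2$ to be simple paths, the phrases ``first time $\omega_1$ is at a vertex of $\tilde\omega_2$'' and ``last time $\tilde\omega_2$ is at $u$'' must be interpreted via the parametrizations, and one must check that $\alpha$ and $\delta$ indeed meet only at $u=\alpha^+=\delta^-$, as the paper's definition of disjointness demands.
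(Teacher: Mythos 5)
Your proof is correct, but it takes a genuinely different route from the paper's for the lower bound. The paper first applies the tilted mass-transport principle to shift one walk to end at $\root$, then writes $w(\omega_1\circ\omega_2)\geq w(\omega_1)w(\omega_2)\bigl[1-\mathbbm{1}(\text{not disjoint})\bigr]$ via the zero-range property, and bounds the ``not disjoint'' term by $(B_w(z)-1)\cdot\frac{\partial}{\partial z}[z\chi_w(z,\lambda)]$ using the same split-at-first-intersection/last-visit decomposition you use; rearranging the resulting inequality $\frac{\partial}{\partial z}[z\chi]\geq\chi^2-(B-1)\frac{\partial}{\partial z}[z\chi]$ then gives the claim. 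You instead package the identical underlying combinatorics as a direct weight-increasing injection from the pairs indexing $\chi_w(z,\lambda)^2$ into the triples indexing $B_w(z)\cdot\frac{\partial}{\partial z}[z\chi_w(z,\lambda)]$, translating via $\gamma_v$ and $\Gamma$-invariance of $\Delta$ in place of an explicit appeal to the mass-transport principle. What this buys: the injection makes positivity manifest at the combinatorial level, sidesteps the $B-1$ versus $B$ bookkeeping and the attendant $-\mathbbm{1}(x=\root)$ term, and is arguably closer in spirit to classical surgery arguments. What you give up is the inclusion--exclusion structure, which in the paper's version makes transparent exactly where the error term $(B-1)\frac{\partial}{\partial z}[z\chi]$ comes from. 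Your care about the zero-range equality $w(\alpha\circ\delta)=w(\alpha)w(\delta)$ being the one place repulsivity alone would fail, and about interpreting ``first time'' and ``last time'' via parametrisations for non-simple paths, is exactly the right thing to flag; both points are handled correctly.
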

The proof is closely adapted from the proof given in \cite[Lemma 1.5.2]{MR2986656}. We simply use the cocycle identity and the tilted mass-transport principle to `take the modular function along for the ride'.

\begin{proof}
For every $0\leq z< z_{c,\lambda}(w)$ we have that
\[
\frac{\partial}{\partial z} \left[z\chi_w(z,\lambda)\right] = \sum_{v\in V} \sum_{\omega :\root \to v} (|\omega|+1)w(\omega)z^{|\omega|} \Delta^\lambda(\root,v).
\]
Since $|\omega|+1$ is the number of ways to split $\omega$ into two (possibly length zero) subpaths, we deduce that
\[
\frac{\partial}{\partial z} \left[z\chi_w(z,\lambda)\right] = \sum_{u,v\in V} \sum_{\omega_1: \root \to u} \sum_{\omega_2 : u \to v} w(\omega_1\circ \omega_2)z^{|\omega_1|+|\omega_2|} \Delta^\lambda(\root,v).
\]
For the upper bound, we use repulsivity and the cocycle identity to write 
\begin{align*}
\frac{\partial}{\partial z} \left[z\chi_w(z,\lambda)\right] &\leq \sum_{u\in V} \sum_{\omega_1:\root\to u} w(\omega_1) z^{|\omega_1|} \Delta^\lambda(\root,u)
\sum_{u\in V} \sum_{\omega_2: u\to v} w(\omega_2) z^{|\omega_2|} \Delta^\lambda(u,v) = \chi_w(z,\lambda)^2,
\end{align*}
where the equality on the second line follows by transitivity of $\Gamma$ and $\Gamma$-invariance of $w$. 

We now turn to the lower bound. 
We begin by applying the tilted mass-transport principle to the sum over $u$ to deduce that
\begin{align*}
\frac{\partial}{\partial z} \left[z\chi_w(z,\lambda)\right] &= \sum_{u,v\in V} \sum_{\omega_1:\root \to u}\sum_{\omega_2:\root\to v} w(\omega_1\circ \omega_2)z^{|\omega_1|+|\omega_2|} \Delta^\lambda(u,v)  \Delta(\root,u)\\
&= \sum_{u,v\in V} \sum_{\omega_1:u \to \root }\sum_{\omega_2:\root\to v} w(\omega_1\circ \omega_2)z^{|\omega_1|+|\omega_2|}  \Delta^{1-\lambda}(\root,u) \Delta^\lambda(\root,v),
\end{align*}
where the cocycle identity has been used in the second line. 
Since $w$ is zero-range, we can bound
\begin{equation*}
w(\omega_1 \circ \omega_2) \geq w(\omega_1) w(\omega_2) \mathbbm{1}\bigl( \omega_1,\omega_2 \text{ disjoint}\bigr)=w(\omega_1)w(\omega_2)\left[1-\mathbbm{1}\bigl(  \omega_1,\omega_2 \text{ not disjoint}\bigr)\right].
\end{equation*}
If $\omega_1:u\to\root$ and $\omega_2:\root \to v$ are \emph{not} disjoint, then there exists $x \in V$ and paths $\omega_{1,1}:u \to x$, $\omega_{1,2}: x \to \root$, $\omega_{2,1}:\root \to x$ and $\omega_{2,2}:x\to v$ such that $\omega_1=\omega_{1,1}\circ\omega_{1,2}$, $\omega_2=\omega_{2,1}\circ\omega_{2,2}$, such that $\omega_{1,1}$ and $\omega_{2,2}$ are disjoint, and such that neither $\omega_{1,2}$ or $\omega_{2,1}$ is trivial. Indeed, simply take $\omega_{1,1}$ to be the portion of $\omega_1$ up until the first time it intersects $\omega_2$, and let $\omega_{2,1}$ be the portion of $\omega_2$ up until the last time it visits $\omega_{1,2}^+$. It follows that
\begin{multline*}
\sum_{u,v\in V} \sum_{\omega_1: u \to \root} \sum_{\omega_2:\root \to v}  \mathbbm{1}\bigl[\omega_1,\omega_2 \text{ not disjoint}\bigr] w(\omega_1)w(\omega_2)z^{|\omega_1|+|\omega_2|} \Delta^{1-\lambda}(\root,u)  \\
\hspace{2cm}\leq \sum_{x,u,v\in V} \sum_{\substack{\omega_{1,1}: u \to x\\ \omega_{1,2}: x \to \root\\
|\omega_{1,2}|\geq1}} \sum_{\substack{\omega_{2,1}: \root \to x\\ \omega_{2,2}: w \to v\\
|\omega_{2,1}|\geq1}} \mathbbm{1}\bigl[\omega_{1,1},\omega_{2,1} \text{ disjoint}\bigr]
 w(\omega_{1,1}\circ \omega_{1,1})w(\omega_{2,2} \circ \omega_{2,2})
\\ 
 \hspace{1cm}\cdot z^{|\omega_{1,1}|+|\omega_{1,2}|+|\omega_{2,1}|+|\omega_{2,2}|} \Delta^{1-\lambda}(\root,u) \Delta^\lambda(\root,v).
\end{multline*}
Using repulsivity of $w$, the zero-range property, and the cocycle identity, we can bound
\begin{multline*}
\mathbbm{1}\bigl[\omega_{1,1},\omega_{2,2} \text{ disjoint}\bigr]
 w(\omega_{1,1} \circ \omega_{1,2})w(\omega_{2,1} \circ \omega_{2,2}) \Delta^{1-\lambda}(\root,u) \Delta^\lambda(\root,v) \\\leq w(\omega_{1,1}\circ \omega_{2,2}) w(\omega_{1,2})w(\omega_{2,1}) \Delta(\root,x)\Delta^{1-\lambda}(x,u)\Delta^\lambda(x,v).
\end{multline*}
This leads to the bound
\begin{align*}
&\sum_{u,v\in V} \sum_{\omega_1: u \to \root} \sum_{\omega_2:\root \to v} \mathbbm{1}\bigl[\omega_1,\omega_2 \text{ not disjoint}\bigr] w(\omega_1)w(\omega_2)z^{|\omega_1|+|\omega_2|} \Delta^{1-\lambda}(\root,u) \\
&\leq \sum_{x\in V} \left[G_w(x) \!-\! \mathbbm{1}\left(x=\root\right)\right]^2 \Delta(\root,x) \sum_{u,v \in V} w(\omega_{1,1} \circ \omega_{2,2}) z^{|\omega_{1,1}|+|\omega_{2,2}|} \Delta^{1-\lambda}(0,u)\Delta^\lambda(\root,v)\\
&= \frac{\partial}{\partial z}\left[ z \chi_w(z,\lambda)\right] \sum_{x\in V} \left[G_w(x)^2 - \mathbbm{1}\left(x=\root\right)\right]  \Delta(\root,x)
 = (B_w(z)-1) \frac{\partial}{\partial z}\left[ z \chi_w(z,\lambda)\right],
\end{align*}
where the tilted mass-transport principle is used in the final equality. (The $-\mathbbm{1}(x=\root)$ arises from the restriction that neither $\omega_{1,2}$ or $\omega_{2,1}$ is trivial.)
On the other hand, similar manipulations to those used in the upper bound, above, yield that
\begin{equation*}
\sum_{u,v\in V} \sum_{\omega_1: \root \to u} \sum_{\omega_2:\root \to v} w(\omega_1)w(\omega_2)z^{|\omega_1|+|\omega_2|} \Delta^{1-\lambda}(\root,u) \Delta^\lambda(\root,v) = \chi_w(z;\lambda)\chi_w(z;1-\lambda) = \chi_w(z;\lambda)^2.
\end{equation*}
Combining these inequalities, we deduce that
\[
\frac{\partial}{\partial z} \left[ z \chi_w(z,\lambda) \right] \geq \chi_w(z,\lambda)^2 - (B_w(z)-1)\frac{\partial}{\partial z} \left[ z \chi_w(z,\lambda) \right],
\]
which rearranges to give the desired inequality.
\end{proof}

Integrating this differential inequality yields the following estimates; see \cite[Theorem 1.5.3]{MR2986656}.

\begin{corollary}
\label{cor:bubblebound}
Let $G$ be a connected, locally finite graph, let $\Gamma \subseteq \Aut(G)$ be a transitive group of automorphisms, and let $w:\Omega\to[0,\infty)$ be a good weight function. Then 
\[
\frac{z_{c,\lambda}}{z_{c,\lambda}-z} \leq \chi_w(z;\lambda) \leq B_w(z_{c,\lambda})\frac{z_{c,\lambda}}{z_{c,\lambda}-z} + B_w(z_{c,\lambda})
\]
for every $\lambda \in \R$ and $0\leq z < z_{c,\lambda}=z_{c,\lambda}(w)$.
\end{corollary}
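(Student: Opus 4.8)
The plan is to integrate the differential inequality of the preceding lemma, following the classical argument of Madras and Slade \cite[Theorem 1.5.3]{MR2986656}. Throughout I write $\chi(z)=\chi_w(z,\lambda)$ and $z_c=z_{c,\lambda}(w)$, which we may assume satisfies $z_c\in(0,1]$ (it is at most $1$ since $\mu_{c,\lambda}\geq1$, and the claim is vacuous if $z_c=0$), and I set $\phi(z)=z\chi(z)=\sum_{n\geq0}Z_w(\lambda;n)z^{n+1}$. This $\phi$ is a power series with non-negative coefficients, hence analytic and non-decreasing on $[0,z_c)$ with $\phi(0)=0$, and $\phi(z)\to\infty$ as $z\uparrow z_c$: indeed submultiplicativity of $Z_w(\lambda;\cdot)$ and Fekete's lemma give $Z_w(\lambda;n)\geq\mu_{c,\lambda}^n=z_c^{-n}$, so $\chi(z_c)=\sum_n z_c^n Z_w(\lambda;n)=\infty$ (the same fact used in the proof of \cref{lem:slickbetabound}). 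Dividing the inequality $\chi(z)^2/B_w(z)\leq\frac{\partial}{\partial z}[z\chi(z)]\leq\chi(z)^2$ through by $\phi(z)^2=z^2\chi(z)^2>0$ and using $\frac{d}{dz}[1/\phi(z)]=-\phi'(z)/\phi(z)^2$, I arrive at
\[
-\frac{1}{z^2}\;\leq\;\frac{d}{dz}\!\left[\frac{1}{\phi(z)}\right]\;\leq\;-\frac{1}{z^2 B_w(z)}\qquad\text{for }0<z<z_c .
\]

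For the lower bound on $\chi$, I would integrate the left-hand inequality over $[z,z']$ with $0<z<z'<z_c$, giving $\frac{1}{\phi(z)}\leq\frac{1}{\phi(z')}-\frac{1}{z'}+\frac{1}{z}$, and then let $z'\uparrow z_c$; since $1/\phi(z')\to0$ this yields $\frac{1}{\phi(z)}\leq\frac{1}{z}-\frac{1}{z_c}$, i.e.\ $\chi(z)=\phi(z)/z\geq\frac{z_c}{z_c-z}$ for $0<z<z_c$, while $z=0$ is the equality $\chi(0)=1=\frac{z_c}{z_c}$. (Equivalently, as remarked after \cref{thm:SAWcounting}, this lower bound is immediate by summing the geometric series $\sum_n (z/z_c)^n\leq\sum_n z^n Z_w(\lambda;n)$.)

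For the upper bound I may assume $B_w(z_c)<\infty$, since otherwise the asserted inequality is trivial. As $B_w$ is a power series with non-negative coefficients it is non-decreasing, and by \cref{lem:tiltedbubble} its radius of convergence is at least $z_c$ (because $B_w(z)\leq\chi_w(z,\lambda)^2<\infty$ for $z<z_c$), so $B_w(z)\leq B_w(z_c)$ for all $z\in[0,z_c]$. Substituting this into the right-hand inequality above and integrating over $[z,z']$ with $0<z<z'<z_c$, then letting $z'\uparrow z_c$, I obtain $\frac{1}{\phi(z)}\geq\frac{1}{B_w(z_c)}\bigl(\frac{1}{z}-\frac{1}{z_c}\bigr)$ and hence $\chi(z)=\phi(z)/z\leq B_w(z_c)\frac{z_c}{z_c-z}$ for $0<z<z_c$; the stated bound follows a fortiori upon adding the non-negative term $B_w(z_c)$. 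For $z=0$ one has $\chi(0)=Z_w(\lambda;0)=1=B_w(0)\leq B_w(z_c)$, which is again dominated by the right-hand side.

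I do not anticipate any real obstacle: this is a routine ODE comparison. The only points I would be careful to state explicitly are that $\chi_w(z,\lambda)$ actually diverges as $z\uparrow z_{c,\lambda}$ (supplied by the Fekete lower bound $Z_w(\lambda;n)\geq\mu_{c,\lambda}^n$), that $B_w$ is finite and non-decreasing throughout $[0,z_{c,\lambda}]$ (supplied by \cref{lem:tiltedbubble} together with the power-series structure), and the harmless separate handling of the endpoint $z=0$ and of the limit $z'\uparrow z_{c,\lambda}$ in the two integrations.
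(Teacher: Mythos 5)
Your proposal is correct and takes exactly the route the paper intends: the paper gives no details here, saying only ``Integrating this differential inequality yields the following estimates; see \cite[Theorem 1.5.3]{MR2986656}'', and you have simply carried out that integration carefully, including the points the cited source also treats (divergence of $\chi_w(\cdot,\lambda)$ at $z_{c,\lambda}$, monotonicity of $B_w$, and the endpoint $z=0$). In fact your integration yields the slightly cleaner bound $\chi_w(z,\lambda)\leq B_w(z_{c,\lambda})\,z_{c,\lambda}/(z_{c,\lambda}-z)$ without the extra additive $B_w(z_{c,\lambda})$, which implies the stated form a fortiori, as you note.
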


We now have everything we need to deduce the upper bound on the susceptibility for \cref{thm:SAWcounting,thm:tilted,thm:generalrepulsive}. To use the susceptibility bounds to deduce the bounds on $Z_w(\lambda;n)$, we use the following lemma. It replaces the Tauberian theory that is typically used in the literature.

\begin{lemma}
\label{lem:chitoZ}
Let $(c_n)_{n\geq 0}$ be a non-negative submultiplicative sequence with generating function $\Phi(x)=\sum_{n\geq 0} x^n c_n$. Then
\[
 x^n c_n \leq \left[\frac{\Phi(y)}{n+1}\right]^2\left(\frac{x}{y}\right)^{2n}
\]
for every $m\geq 1$ and $x\geq y > 0$. 
\end{lemma}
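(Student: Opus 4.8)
The plan is to extract the bound directly from submultiplicativity by averaging over the split point and then recognising the resulting sum as a Cauchy product that Cauchy--Schwarz collapses back into $\Phi(y)$.

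\medskip

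\noindent\textbf{Step 1 (reduction to $x=y$).} Since $x\geq y>0$ we have $(x/y)^n\leq (x/y)^{2n}$, so the claimed inequality follows once we know $y^n c_n\leq \bigl[\Phi(y)/(n+1)\bigr]^2$: multiplying this through by $(x/y)^n$ and then enlarging $(x/y)^n$ to $(x/y)^{2n}$ gives the stated form. (If $\Phi(y)=\infty$ everything is vacuous, so there is no need for any nondegeneracy hypothesis on the sequence.)

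\medskip

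\noindent\textbf{Step 2 (the core estimate).} For every $0\leq j\leq n$, submultiplicativity gives $c_n\leq c_j c_{n-j}$, and since all terms are non-negative reals this yields $\sqrt{c_n}\leq \sqrt{c_j}\,\sqrt{c_{n-j}}$. Summing over the $n+1$ choices of $j$ and writing $y^{n/2}=y^{j/2}y^{(n-j)/2}$,
\[
(n+1)\sqrt{c_n}\,y^{n/2}\;\leq\;\sum_{j=0}^{n}\sqrt{c_j y^j}\,\sqrt{c_{n-j}y^{n-j}}\;\leq\;\Bigl(\sum_{j=0}^{n}c_j y^j\Bigr)^{1/2}\Bigl(\sum_{j=0}^{n}c_{n-j}y^{n-j}\Bigr)^{1/2}\;=\;\sum_{j=0}^{n}c_j y^j\;\leq\;\Phi(y),
\]
where the second inequality is Cauchy--Schwarz. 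Squaring gives $(n+1)^2 c_n y^n\leq \Phi(y)^2$, which is exactly the $x=y$ case.

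\medskip

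\noindent\textbf{Step 3 (assembly).} Combining Steps 1 and 2, $x^n c_n=(x/y)^n\,y^n c_n\leq (x/y)^n\,\Phi(y)^2/(n+1)^2\leq (x/y)^{2n}\,\Phi(y)^2/(n+1)^2$, as required.

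\medskip

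\noindent\textbf{Expected main difficulty.} There is no serious obstacle; the only point requiring care is that the naive averaging $(n+1)c_n\leq \sum_{j=0}^{n}c_j c_{n-j}$, followed by bounding the single Cauchy-product coefficient by $y^{-n}\Phi(y)^2$, loses a factor of $n+1$ relative to the target. Taking square roots \emph{before} summing—so that it is Cauchy--Schwarz, rather than a trivial termwise bound, that controls the sum—is precisely what recovers the full $(n+1)^2$ denominator.
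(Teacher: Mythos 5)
Your proof is correct. Both arguments hinge on the same idea — apply submultiplicativity $c_n \le c_j c_{n-j}$, average over the split point $j$, and control the result by the generating function at $y$ — but you close the averaging step differently. The paper's proof uses the pigeonhole principle to locate a single index $k$ for which $\tfrac{1}{2}(c_k x^k + c_{n-k}x^{n-k}) \le (x/y)^n\Phi(y)/(n+1)$, and then combines submultiplicativity with the AM--GM inequality at that one index. You instead take square roots of submultiplicativity termwise, sum over \emph{all} $j$, and invoke Cauchy--Schwarz to recognize the resulting symmetric sum as bounded by $\Phi(y)$. The two mechanisms are essentially dual: Cauchy--Schwarz here is interchangeable with applying AM--GM termwise to $\sqrt{c_j y^j}\sqrt{c_{n-j}y^{n-j}} \le \tfrac{1}{2}(c_j y^j + c_{n-j}y^{n-j})$ before summing. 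A small bonus of your route is that Step 2 yields $c_n y^n \le \bigl[\Phi(y)/(n+1)\bigr]^2$ directly, and hence the sharper bound $x^n c_n \le (x/y)^n\bigl[\Phi(y)/(n+1)\bigr]^2$, with only one power of $(x/y)^n$ rather than two; you deliberately relax it to $(x/y)^{2n}$ only to match the stated form. Either way the lemma and its downstream application (with $y = nx/(n+1)$, so $(x/y)^{2n} \approx e^2$) are unaffected.
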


When we apply this lemma we will take $y=nx/(n+1)$ so that $(x/y)^{2n}\approx e^2$ is of constant order.

\begin{proof}
We clearly have that
\[
\sum_{k=0}^n \frac{c_{n-k}x^{n-k} + c_kx^k}{2} = \sum_{k=0}^n c_kx^k \leq \left(\frac{x}{y}\right)^n \Phi(y),
\]
and so there must exist $0\leq k \leq n$ such that 
\[\frac{c_{n-k}x^{n-k} + c_k x^k}{2} \leq \left(\frac{x}{y}\right)^n \frac{\Phi(y)}{n+1}.\]
For this $k$ we have that
\[
c_n x^n \leq c_{n-k}x^{n-k}c_k x^k \leq \left[\frac{c_{n-k}x^{n-k} + c_k x^k}{2} \right]^2\leq \left[\frac{\Phi(y)}{n+1}\right]^2\left(\frac{x}{y}\right)^{2n}
\]
by submultiplicativity and the inequality of arithmetic and geometric means.
\end{proof}

\begin{proof}[Proof of \cref{thm:SAWcounting} and part $(i)$ of \cref{thm:tilted,thm:generalrepulsive}] \hspace{0.1em}\newline
 \cref{thm:generallambda} implies that the tilted susceptibility $\chi_w(z_{c,\lambda},1/2)$ is finite for every $\lambda \neq 1/2$, and it follows from \cref{lem:tiltedbubble}, and  \cref{cor:bubblebound} that
\begin{align}
\chi_w(z;\lambda) &\leq  \chi_{w}(z_{c,\lambda};1/2)^2\frac{z_{c,\lambda}}{z_{c,\lambda}-z} + \chi_{w}(z_{c,\lambda};1/2)^2.
\label{eq:finalChi}
\intertext{for every $0\leq z < z_{c,\lambda}=z_{c,\lambda}(w)$. Thus, applying \cref{lem:chitoZ} with $c_n=Z_w(\lambda;n)$, $x=z_{c,\lambda}(w)$, and $y=nz_{c,\lambda}(w)/(n+1)$ yields that}
Z_w(\lambda;n)&\leq \left[e^2\chi_w(z_{c,\lambda},1/2)^4+o(1)\right]\mu_{c,\lambda}^n 
\label{eq:finalZ}
\end{align}
as $n\to\infty$.
\end{proof}

\begin{remark}
\label{remark:quant}
In the untilted case $\lambda=0$, \cref{cor:slick} together with \eqref{eq:finalChi} and \eqref{eq:finalZ} yield that
\begin{align}
\chi(z) &\leq \mu_c^2\exp \left[ \frac{4\mu_c^2}{e^{3t_0/2} -e^{t_0}} \right] \frac{z_c}{z_c-z} + O(1) && z \nearrow z_c\\
Z(n) &\leq  
\mu_c^4\exp \left[ \frac{8\mu_c^2}{e^{3t_0/2} -e^{t_0}} +2 \right] \mu_c^{n} + o(\mu_c^n) && n \nearrow \infty.
\end{align}
In particular, these estimates hold with $t_0 \geq \log (k-1)$ for the product of the $k$-regular tree $T_k$ with an arbitrary transitive graph $G$. We have not attempted to optimize these constants.
\end{remark}

\subsection{Ballisticity and two-point function decay}

\begin{proof}[Proof of items 2 and 3 of \cref{thm:tilted,thm:generalrepulsive}]
 Observe that the trivial inequality 
\[
G_w(z;x) \leq \left(\frac{z}{z'}\right)^{d(\root,x)} G^k_w(z';x)
\]
holds for every $z\geq 0$ and $x\in V$. On the other hand, for $z<z_t$ we have that
\[
 G_w(z;x) \leq \chi_w(z;1/2) \Delta^{-1/2}(\root,x)
\]
for every $x\in V$, and it follows by symmetry that
\[
G_w(z,x) \leq \chi_w(z;1/2) \left[ \min\bigl\{\Delta(\root,x),\Delta(x,\root)\bigr\}\right]^{1/2} \leq \chi_w(z;1/2)
\]
for every $0<z<z_t$ and $x\in V$. Thus, we have that
\[
G_w(z,x)  \leq \chi_w(z';1/2) \left(\frac{z}{z'}\right)^{d(\root,x)}
\]
for every  $x\in V$ and $0<z<z'<z_t$, which implies the claim of item 2.

We now prove item 3.  We prove the claim in the case $\lambda>1/2$, the case $\lambda<1/2$ being similar. Let $\lambda>\lambda'>1/2$. Then we have that 
\begin{align*}
\P_{w,\lambda,n}\Bigl( \log \Delta(\root,X_n) \leq cn \Bigr) &= Z_w(\lambda;n)^{-1} \sum_{x\in H^-_{cn}} \sum_{\omega \in \Omega} w(\omega) \Delta^\lambda(\root,x) \mathbbm{1}\bigl[\omega:0\to H^-_{cn},\, |\omega|=n \bigr]\\
&\leq  Z_w(\lambda,n)^{-1}Z_w(\lambda';n) e^{(\lambda-\lambda')cn}.
\end{align*}
We deduce from item 1 of \cref{thm:generalrepulsive} that
\[
\P_{w,\lambda,n}\left( \log \Delta(\root,X_n) \leq cn \right) \leq C_{\lambda'}\left(
\frac{z_{c,\lambda}}{z_{c,\lambda'}}\right)^n  e^{(\lambda-\lambda')cn}.
\]
The result follows by fixing $\lambda>\lambda'>1/2$ and letting $c=c_{\lambda,\lambda'}$ be sufficiently small that $z_{c,\lambda}/z_{c,\lambda'} < e^{-c(\lambda-\lambda')}$. \qedhere
\end{proof}

\section{Remarks and open problems}
\label{sec:end}

Several interesting questions remain open concerning the behaviour of the \emph{critically tilted SAW}, that is, the case $\lambda =1/2$.

\subsection{Exponents at $\lambda=1/2$ are graph dependent}
In this subsection we briefly outline an example that shows that the exponent governing the critically tilted susceptibility depends on the choice of $G$ and $\Gamma$, and in particular that \cref{thm:tilted} cannot always be generalised to $\lambda=1/2$. This follows from a related analysis for percolation on trees with respect to two different choices of nonunimodular automorphism group that we performed in \cite{Hutchcroftnonunimodularperc}. (Note that on trees the tilted susceptibilities for SAW and percolation are equal for $0\leq z=p \leq 1$.)

Let $T$ be the $k$-regular tree. 
The most obvious choice of a nonunimodular transitive subgroup of $T$ is the group $\Gamma_\xi$ consisting of those automorphisms of $T$ that fix some given end $\xi$ of $T$. For the pair $(T,\Gamma_\xi)$ we can easily compute
\[\alpha (z) = -\log_{k-1}(z) \qquad \text{ and } \qquad z_{c,\lambda} = (k-1)^{-\max\{\lambda,1-\lambda\}}. \]
Moreover, for $z<z_{c,\lambda}$ we can compute the tilted susceptibility to be
\[
\chi(z,\lambda) = \frac{1-z^2}{(1-(k-1)^{1-\lambda}z)(1-(k-1)^\lambda z)}.
\]
Thus, we see that for $\lambda\neq 1/2$,
$\chi(z_{c,\lambda}-\eps,\lambda)$ grows like $\eps^{-1}$ as $\eps\to0$, as stated in \cref{thm:tilted}, while at $\lambda=1/2$ the denominator has a double root and we have instead that
%
\[\chi_{z_{c,1/2}-\eps,1/2}=\frac{k-2}{k-1} \eps^{-2}.\]
This shows that \cref{thm:tilted} cannot be extended in general to the case $\lambda=1/2$. 

We now describe a different transitive nonunimodular group of automorphisms on the four-regular tree. We define a $(1,1,2)$-\textbf{orientation} of $T$ to be a (partial) orientation of the edge set of $T$ such that every vertex has one oriented edge emanating from it, two oriented edges pointing into it, and one unoriented edge incident to it. Fix one such orientation of $T$, and let $\Gamma'$ be the group of automorphisms of $T$ that preserve the orientation. 
In \cite{Hutchcroftnonunimodularperc}, we compute that
\begin{align*}
z_{c,\lambda} &= \frac{2^\lambda+2^{1-\lambda}+1 - \sqrt{(2^\lambda+2^{1-\lambda}+1)^2-12}}{6} & \lambda \in \R \\
\intertext{and}
\chi(z,\lambda) &= \frac{1-3z^2}{1 - (2^\lambda+2^{1-\lambda}+1)z+3z^2} & \lambda \in \R,\, 0 \leq z<z_{c,\lambda}. \label{eq:orientedtreechi}
\end{align*}The denominator of this expression never has a double root, so that, in contrast to the previous example,
\begin{align*}
 \chi(z_{c,\lambda}-\eps,\lambda) &\asymp \eps^{-1} & \lambda \in \R,\, \eps \downarrow 0
\end{align*}

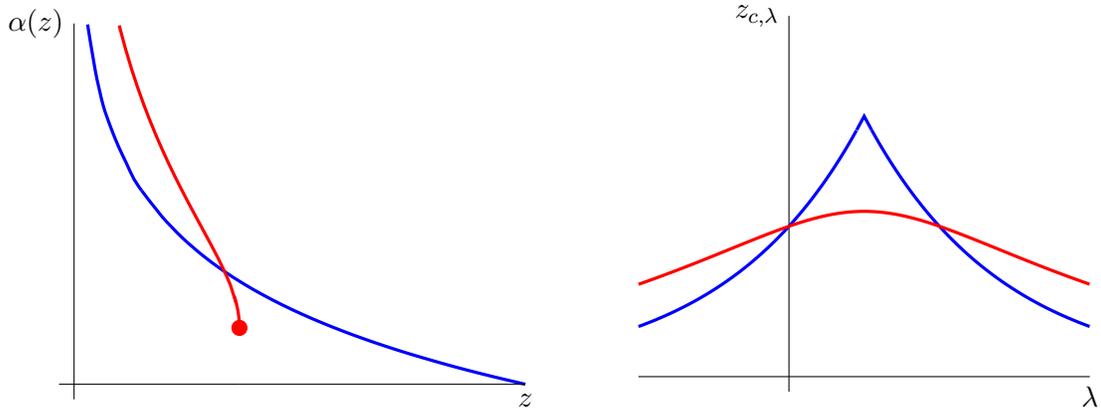
\begin{figure}
\centering
\begin{tikzpicture}
      \draw[-,thin] (-0.2,0) -- (6*0.366406859,0);
      \draw[-,thin] (0,-0.2) -- (0,4.8) node[left,black] {$\alpha(z)$};
      \draw[scale=6,domain=0.03:0.15,smooth,very thick,variable=\x,blue,samples=8] plot ({\x},{-0.25*ln(\x)/ln(3)});
      \draw[scale=6,domain=0.15:1,smooth,very thick,variable=\x,blue,samples=20] plot ({\x},{-0.25*ln(\x)/ln(3)});
      \draw[scale=6,domain=0.1:0.36,very thick,variable=\x,red,samples=20] plot ({\x},{0.25*log2((3*pow(\x,2)-\x+1+pow(9*pow(\x,4)-6*pow(\x,3)-pow(\x,2)-2*\x+1,0.5))/(2*\x))});
      \draw[scale=6,domain=0.355:0.3664,very thick,variable=\x,red,samples=30] plot ({\x},{0.25*log2((3*pow(\x,2)-\x+1+pow(9*pow(\x,4)-6*pow(\x,3)-pow(\x,2)-2*\x+1,0.5))/(2*\x))});
      \draw[scale=6,domain=0.366:0.3665,very thick,variable=\x,red,samples=30] plot ({\x},{0.25*log2((3*pow(\x,2)-\x+1+pow(max(9*pow(\x,4)-6*pow(\x,3)-pow(\x,2)-2*\x+1,0),0.5))/(2*\x))});
      \draw[-,black] (6*0.366406859,0) -- (6,0) node[below,black] {$z$};
      \draw[red,fill=red] (6*0.366406859,0.5*6*0.25) circle [radius=0.1]; 
\end{tikzpicture}
    \hspace{1cm}
\begin{tikzpicture}
      \draw[-,thin] (-2,0) -- (4,0) node[below] {$\lambda$};
      \draw[-,thin] (0,-0.2) -- (0,4.8) node[left,black] {$z_{c,\lambda}$};
      \draw[scale=2,domain=-1:0.45,smooth,very thick,variable=\x,blue,samples=100] plot ({\x},{3*pow(3,-max(\x,1-\x))});
      \draw[scale=2,domain=0.45:0.55,very thick,variable=\x,blue,samples=3] plot ({\x},{3*pow(3,-max(\x,1-\x))});
      \draw[scale=2,domain=0.55:2,smooth,very thick,variable=\x,blue,samples=100] plot ({\x},{3*pow(3,-max(\x,1-\x))});
      \draw[scale=2,domain=-1:2,smooth,very thick,variable=\x,red,samples=100] plot ({\x},{0.5*(pow(2,\x)+pow(2,1-\x)+1-pow(max(0,pow(pow(2,\x)+pow(2,1-\x)+1,2)-12),0.5))});
    \end{tikzpicture}
    \caption{
     Comparison of $\alpha(z)$ and $z_{c,\lambda}$ for the $4$-regular tree with respect to the automorphism group fixing an end (blue) and the automorphism group fixing a $(1,1,2)$-orientation (red). The second figure is formed by reflecting the first around the line $\alpha(z)=1/2$ and then rotating. 
    The intersection of the two curves on the left occurs at $(z_c,1)$. This intersection must occur since $z_{c,0}=z_{c,1}=z_c$ does not depend on the choice of automorphism group.}
\end{figure}


\noindent
 for every $\lambda \in \R$. 
Since $z_t$ for this example is smaller than $z_t$ for the previous example, we have by \cref{lem:tiltedbubble} that the bubble diagram converges at $z_t$, so that we could also have deduced this behaviour from \cref{cor:bubblebound}.
 Furthermore, it follows from our analysis of percolation in \cite{Hutchcroftnonunimodularperc} that $\alpha(z)$ has a jump discontinuity from $1/2$ to $-\infty$ at $z_t$. Indeed,
\begin{align*}\alpha(z)= &\begin{cases} \log_2\left(\frac{3z^2-z+1+\sqrt{9z^4-6z^3-z^2-2z+1}}{2z}\right) & z \leq z_t\\
-\infty & z>z_t.
\end{cases}  \end{align*}
As a further point of contrast to the previous example, we note that in this example the function $\lambda \mapsto z_{c,\lambda}$ is real-analytic, whereas in the previous example it was not differentiable at $\lambda=1/2$. 

\subsection{Hammersley-Welsh-type bounds for critically tilted SAW}
Consider SAW on $\Z^d$.
The Hammersley-Welsh inequality \cite{MR0139535} states that
\[
Z(n) \leq \exp\left[O(n^{1/2})\right]\mu_c^n.
\]
See \cite[Section 3.1]{MR2986656} for background and \cite{1708.09460} for a small improvement.

We now briefly outline how an analogous inequality may be obtained for critically tilted ($\lambda=1/2$) self-avoiding walk in the nonunimodular context.
It can be deduced from \cref{cor:slick} that
\[
\chi(z_t-\eps,1/2) \leq \exp\left[ O(\eps^{-1})\right],
\]
and applying \cref{lem:chitoZ} with $c_m=Z(1/2;n)$, $x=z_t$, and $y=z_t(1-m^{-1/2})$ yields that
\[
Z(1/2;n) \leq \exp\left[ O(n^{1/2})\right] \mu_{c,1/2}^n,
\]
which is an exact analogue of the Hammersley-Welsh bound. 

\subsection{Questions}

\begin{question}
Let $T_k$ be a $k$-regular tree, let $d\geq 1$ and consider the group of automorphisms $\Gamma_\xi \times \Aut(\Z^d) \subseteq \Aut(T_k \times \Z^d)$ of $T_k \times \Z^d$, where $\Gamma_\xi$ is the group of automorphisms that fix some specified end $\xi$ of $T$. 
\begin{enumerate}
	\item Is $B(z_t)<\infty$?
	\item What are the asymptotics of $a(z_t;n)$ and $b(z_t;n)$ as defined in \cref{sec:short}? 
	\item
What is the behaviour of $\chi(z_{t}-\eps,1/2)$ as $\eps\to0$? What about $Z(n,1/2)$ as $n\to\infty$?
\item What is the typical displacement of a SAW sampled from $\P_{1/2,n}$?
\item For which of these questions does the answer depend on $d$? 
\end{enumerate}
\end{question}

\begin{question}
Let $G$ be a connected locally finite graph and let $\Gamma \subseteq \Aut(G)$ be transitive and nonunimodular. Does there exist $C=C(G,\Gamma)< \infty$ such that
\[
Z(1/2;n) =  O\bigl(n^C \mu_c^n\bigr)
\]
for every $n\geq 1$? Is there a universal choice of this $C$? Does $C=1$ always suffice?
\end{question}

The question concerning $C=1$ arises from the guess that the pair $(T_k,\Gamma_\xi)$ has the largest subexponential correction to $Z(1/2;n)$ among all pairs $(G,\Gamma)$. 

\begin{question}
Let $G$ be a connected locally finite graph and let $\Gamma \subseteq \Aut(G)$ be transitive and nonunimodular. What asymptotics are possible for the typical displacement of a sample from $\P_{n,1/2}$? Is it always of order at least $n^{1/2}$? 
\end{question}

\subsection*{Acknowledgments} This work was carried out while the author was an intern at Microsoft Research, Redmond. 
 We thank Omer Angel for improving \cref{lem:chitoZ} by a factor of 4.  We also thank Tyler Helmuth for helpful discussions, and thank Gordon Slade and Hugo Duminil-Copin for comments on an earlier draft, and thank the anonymous referee for several helpful suggestions. 
  
  \setstretch{1}
  \bibliographystyle{abbrv}
{\footnotesize{
  \bibliography{unimodularthesis}
  }
}
\end{document}